\documentclass{article}
\usepackage[utf8]{inputenc}

\usepackage[normalem]{ulem}

\title{Serre's uniformity question and proper subgroups of $C_{ns}^+(p)$}
\author{Lorenzo Furio and Davide Lombardo}
\date{}

\usepackage[usenames,dvipsnames]{color}
\usepackage{amsmath}%
\usepackage{amsfonts}%
\usepackage{amssymb}%
\usepackage{amsthm}%
\usepackage{graphicx}
\usepackage{multirow}
\usepackage[utf8]{inputenc}
\usepackage[english]{babel}
\usepackage{mathrsfs}
\usepackage{array}
\usepackage{rotating}
\usepackage{multirow}
\usepackage{faktor}
\usepackage{tikz}
\usepackage{bbm}

\usepackage{a4wide}

\setlength\extrarowheight{3pt}

\usepackage{hyperref}

\hypersetup{
	colorlinks,
	citecolor=black,
	linkcolor=black,
	urlcolor=black
}

\newtheorem{theorem}{Theorem}[section]

\newtheorem{claim}[theorem]{Claim}

\newtheorem{corollary}[theorem]{Corollary}

\newtheorem{lemma}[theorem]{Lemma}

\newtheorem{proposition}[theorem]{Proposition}

\newtheorem{question}[theorem]{Question}

\theoremstyle{definition}
\newtheorem{definition}[theorem]{Definition}

\newcommand{\cubes}{\operatorname{cubes}}

\newcommand{\boundonprimes}{103000}

\theoremstyle{remark}
\newtheorem{remark}[theorem]{Remark}
\addtolength{\topmargin}{-.5in}
\addtolength{\textheight}{0.5in}

\newcommand{\Z}{\mathbb{Z}}
\newcommand{\Q}{\mathbb{Q}}
\newcommand{\C}{\mathbb{C}}
\newcommand{\F}{\mathbb{F}}
\newcommand{\OK}{\mathcal{O}_K}

\newcommand{\Gal}{\operatorname{Gal}}

\newcommand{\Aut}{\operatorname{Aut}}
\newcommand{\GL}{\operatorname{GL}}

\newcommand{\Fheight}{\operatorname{h}_\mathcal{F}}
\newcommand{\uhp}{\mathcal{H}}

\setlength{\parindent}{10pt}
\numberwithin{equation}{section}

\begin{document}
	
\maketitle

\begin{abstract}
    Serre's uniformity question asks whether there exists a bound $N>0$ such that, for every non-CM elliptic curve $E$ over $\Q$ and every prime $p>N$, the residual Galois representation $\rho_{E,p}:\Gal(\overline{\Q}/\Q) \to \Aut(E[p])$ is surjective. 
    The work of many authors has shown that, for $p>37$, this representation is either surjective or has image contained in the normaliser of a non-split Cartan subgroup $C_{ns}^+(p)$. Zywina has further proved that, whenever $\rho_{E,p}$ is not surjective for $p>37$, its image is either $C_{ns}^+(p)$ or a certain subgroup $G(p)$ of $C_{ns}^+(p)$ of index 3. Recently, Le Fourn and Lemos showed that the index-3 case cannot arise for $p>1.4 \cdot 10^7$. We strengthen this result by proving that the image of $\rho_{E, p}$ is not conjugate to $G(p)$ for any prime larger than $5$.
\end{abstract}

\tableofcontents

\section{Introduction}

In 1972 Serre \cite{serre72} proved his celebrated Open Image Theorem, stating that for a number field $K$ and an elliptic curve $\faktor{E}{K}$ without (potential) complex multiplication, the $p$-adic representation \[\rho_{E,p^\infty} : \Gal\left(\faktor{\overline{K}}{K}\right) \to \Aut(T_pE)\] is surjective for almost every prime $p$, where $T_pE$ is the $p$-adic Tate module of $E$. He also gave a stronger formulation of this result: the image of the adelic representation \[
\rho_E = \prod_{p} \rho_{E, p^\infty} : \Gal\left(\faktor{\overline{K}}{K}\right) \to \prod_{p \text{ prime}} \Aut(T_pE) \cong \Aut(\widehat{\Z}^2)
\]
is open; equivalently, the image of $\rho_E$ has finite index in $\Aut(\widehat{\Z}^2)$.
In the same paper, Serre proposed what has become known as Serre's uniformity question, which has proved to be extremely hard even for elliptic curves defined over $\Q$.
\begin{question}[Serre's uniformity question]\label{question: uniformity for Q, mod-p version}
    Let $K$ be a number field. Does there exist a constant $N$, depending only on $K$, such that for every non-CM elliptic curve $\faktor{E}{K}$ and for every prime $p>N$ the residual representation $$\rho_{E,p} : \Gal\left(\faktor{\overline{K}}{K}\right) \to \Aut(E[p]) \cong \operatorname{GL}_2(\F_p)$$ is surjective?
\end{question}A lemma of Serre \cite[IV-23, Lemma 3]{MR1484415} shows that, for $p \geq 5$ unramified in $K$, the $p$-adic representation attached to $\faktor{E}{K}$ is surjective if and only if the mod-$p$ representation is. Thus, Question \ref{question: uniformity for Q, mod-p version} is essentially equivalent to the problem of the surjectivity of the $p$-adic representations attached to elliptic curves $\faktor{E}{K}$, for all $p$ large enough.
In the rest of this paper, we focus on the case $K=\mathbb{Q}$, for which it is widely believed that Question \ref{question: uniformity for Q, mod-p version} has an affirmative answer with $N=37$. 

To show that the image $\operatorname{Im}\rho_{E,p}$ of $\rho_{E, p}$ is all of $\GL_2(\F_p)$, one can try to exclude the possibility that $\operatorname{Im}\rho_{E,p}$ is contained in a maximal proper subgroup of $\GL_2(\F_p)$. Taking into account some basic constraints that $\operatorname{Im}\rho_{E,p}$ has to satisfy -- namely, that its determinant is all of $\F_p^\times$ and that it should contain an element of order 2 with eigenvalues $\pm 1$ (the image of complex conjugation) -- there remains quite a short list of maximal subgroups of $\GL_2(\F_p)$ that could potentially contain $\operatorname{Im}\rho_{E,p}$: some so-called `exceptional' subgroups, the Borel subgroups, and the normalisers of (split or non-split) Cartan subgroups. Progress on Question \ref{question: uniformity for Q, mod-p version} for $K=\mathbb{Q}$ has mostly been organised according to this classification of maximal subgroups. 

Serre himself showed \cite[§8.4, Lemma 18]{MR644559} that the exceptional subgroups cannot contain the image of $\rho_{E, p}$ as soon as $p>13$.
Mazur \cite{mazur78} then proved that there are no isogenies of prime degree $p$ between non-CM elliptic curves over $\mathbb{Q}$ for $p>37$: this is equivalent to the fact that for $p>37$ the image of $\rho_{E, p}$ is not contained in a Borel subgroup. More recently, Bilu and Parent developed their version of Runge's method for modular curves \cite{bilu11runge}, which allowed them to prove \cite{bilu11split} that $\operatorname{Im}\rho_{E,p}$ is not contained in the normaliser of a split Cartan for sufficiently large $p$. The result was then sharpened by Bilu-Parent-Rebolledo \cite{bilu13}, who showed that the same statement holds for every $p \geq 11$, with the possible exception of $p=13$.
Finally, the result was extended to also cover the prime $p=13$ by means of the so-called \textit{quadratic Chabauty} method \cite{split-level13}.

Thus, the only case that remains open is that of normalisers of non-split Cartan subgroups. Runge's method cannot be applied to the modular curves corresponding to these subgroups, but partial results have been obtained even in this case. In particular, Zywina \cite{zywina15} and Le Fourn--Lemos \cite{lefournlemos21} studied the possibility that the image of $\rho_{E, p}$ is \textit{strictly} contained in the normaliser of a non-split Cartan subgroup. We now recall their results.

Assuming $p>2$, we let $\varepsilon$ be the reduction modulo $p$ of the least positive integer which represents a quadratic non-residue in $\F_p^\times$. We denote by $C_{ns}(p)$ the non-split Cartan group
\begin{equation}\label{eq:cartan}
    C_{ns}(p):= \left\lbrace \begin{pmatrix} a & \varepsilon b \\ b & a \end{pmatrix} \,\middle|\, a,b \in \F_p, \ (a,b) \ne (0,0) \right\rbrace
\end{equation}
and by $C_{ns}^+(p)$ its normaliser.
\begin{theorem}[Zywina]\label{zywina}
    Suppose that $\rho_{E,p}$ is not surjective for a non-CM elliptic curve $\faktor{E}{\Q}$ and a prime $p>37$. \begin{itemize}
        \item If $p \equiv 1 \pmod 3$, then $\operatorname{Im}\rho_{E,p}$ is conjugate to $C_{ns}^+(p)$ in $\operatorname{GL}_2(\F_p)$.
        \item If $p \equiv 2 \pmod 3$, then $\operatorname{Im}\rho_{E,p}$ is conjugate in $\operatorname{GL}_2(\F_p)$ either to $C_{ns}^+(p)$ or to the group $$G(p):= \{a^3 \bigm\vert a \in C_{ns}(p)\} \cup \left\lbrace \begin{pmatrix} 1 & 0 \\ 0 & -1 \end{pmatrix} \cdot a^3 \bigm\vert a \in C_{ns}(p) \right\rbrace \subset C_{ns}^+(p).$$
    \end{itemize}
\end{theorem}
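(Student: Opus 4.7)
The plan is to combine the exclusion theorems recalled in the introduction with a careful lattice analysis of $\mathcal{N}:=C_{ns}^+(p)$. By the cited work of Serre, Mazur, Bilu--Parent, and their successors, for $p>37$ the non-surjectivity of $\rho_{E,p}$ forces the image $H:=\operatorname{Im}\rho_{E,p}$ to sit inside $\mathcal{N}$. Write $C:=C_{ns}(p)$, identify it with $\F_{p^2}^\times$ (cyclic of order $p^2-1$), and set $H_0:=H\cap C$ and $e:=[C:H_0]$. Under this identification the determinant restricts to the norm $\operatorname{Nm}:\F_{p^2}^\times\to\F_p^\times$, and the outer action of $\mathcal{N}/C\cong\Z/2\Z$ on $C$ is the Frobenius $g\mapsto g^p$. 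Since every subgroup of a cyclic group is characteristic, $H_0$ is automatically Frobenius-stable, hence $H=H_0\sqcup H_0\tau$ for any $\tau\in H\setminus C$.

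Two direct computations then pin down the admissible shapes of $H$. First, $-I$ is the unique involution of $C$ but has eigenvalues $(-1,-1)$, whereas every involution of $\mathcal{N}\setminus C$ has determinant $-1$ and eigenvalues $(+1,-1)$; since the image of complex conjugation is an involution of the second type, we get $H\not\subseteq C$ and that some element of $H$ has determinant $-1$. Second, $\operatorname{Nm}(H_0)$ is the subgroup of $e$-th powers in $\F_p^\times$, of index $\gcd(e,p-1)$. Combining these observations with $\det H=\F_p^\times$ gives
\[
\operatorname{Nm}(H_0)\cup (-1)\cdot\operatorname{Nm}(H_0)=\F_p^\times,
\]
which forces $\gcd(e,p-1)\in\{1,2\}$, the value $2$ being possible only when $p\equiv 3\pmod 4$. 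An enumeration then yields the candidate proper subgroups: either $e$ is an odd divisor of $p+1$, or $p\equiv 3\pmod 4$ and $e=2e'$ with $e'$ an odd divisor of $p+1$. The subcase $e=1$ recovers $H=\mathcal{N}$, and the subcase $e=3$, $p\equiv 2\pmod 3$ (where $H_0$ is the subgroup of cubes in $C$) recovers $H$ conjugate to $G(p)$.

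The main remaining obstacle is to exclude every other candidate for $p>37$. My approach would be the following: for any such proper $H$, the quadratic character $\Gal(\overline{\Q}/\Q)\to H/H_0\cong\Z/2\Z$ cuts out a quadratic field $K/\Q$, and over $K$ the restriction $\rho_{E,p}|_{\Gal(\overline{\Q}/K)}$ takes values in $H_0\subseteq C$, hence becomes abelian. For a non-CM elliptic curve this potential abelianity is very restrictive: following the isogeny-character analysis of Momose, one extracts from this restriction a pair of conjugate characters that either match a CM character of $K$, or yield (after a further twist by a character of small order) a rational isogeny on a quadratic twist of $E$, contradicting Mazur's theorem for $p>37$. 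The reason $G(p)$ escapes this obstruction is the arithmetic accident that cubing is surjective on $\F_p^\times$ exactly when $p\equiv 2\pmod 3$; in that case the character-theoretic argument produces no extra Galois-theoretic datum beyond the cyclotomic character. Making this dichotomy precise in the remaining cases -- notably for $e$ an odd prime dividing $p+1$, and for the squares case when $p\equiv 3\pmod 4$ -- will be the technical heart of the proof.
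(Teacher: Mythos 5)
Note first that the paper does not prove this statement itself: it is quoted from Zywina's preprint, with the published proof being \cite[Proposition 1.4]{lefournlemos21}. Your opening analysis is sound and matches the standard reduction: prior results force $H=\operatorname{Im}\rho_{E,p}\subseteq C_{ns}^+(p)$; complex conjugation (an involution with eigenvalues $+1,-1$, hence not $-I$) forces $H\not\subseteq C_{ns}(p)$; and surjectivity of the determinant forces $\gcd(e,p-1)\le 2$ for $e=[C_{ns}(p):H\cap C_{ns}(p)]$, narrowing the candidates as you describe. (Two small inaccuracies: the second determinant coset is $-\operatorname{Nm}(c_0)\cdot(\F_p^\times)^e$, not necessarily $(-1)\cdot\operatorname{Nm}(H_0)$, though your conclusion survives; and in the $e=3$ case you still owe an argument that all index-two extensions of the cube subgroup inside $C_{ns}^+(p)$ are $\GL_2(\F_p)$-conjugate to the specific group $G(p)$.)

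The genuine gap is the step you yourself flag as "the technical heart": excluding every $e\notin\{1,3\}$. The approach you propose for it would not work. Restricting $\rho_{E,p}$ to the quadratic field $K$ cut out by $H/H_0$ lands in the non-split Cartan, whose characters are valued in $\F_{p^2}^\times$ rather than $\F_p^\times$; they do not give an isogeny character on $E$ or any twist of it (the absence of such an isogeny is precisely why the non-split Cartan case resists Mazur's method), and Momose's machinery is built for Borel and split-Cartan images. Moreover, abelianity of $\rho_{E,p}|_{\Gal(\overline{\Q}/K)}$ already holds when $H=C_{ns}^+(p)$ itself, so it cannot by itself separate the admissible candidates from the forbidden ones. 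The actual argument is local at $p$ and much shorter: since $H\subseteq C_{ns}^+(p)$, the curve has potentially good supersingular reduction at $p$, and tame inertia at $p$ forces $H\cap C_{ns}(p)$ to contain a cyclic subgroup of index dividing the semistability defect $e\in\{1,2,3,4,6\}$; the determinant condition forces that index to be odd, hence $1$ or $3$, which immediately yields the dichotomy $H=C_{ns}^+(p)$ or $H\cong G(p)$ (the latter only when $p\equiv 2\pmod 3$, so that cubing is surjective on $\F_p^\times$). This inertia argument is recapitulated, and refined to a congruence modulo $9$, in the proof of Theorem 3.1 of the present paper; you should replace your Momose-style step with it.
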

This theorem follows from Proposition 1.13 of the unpublished preprint \cite{zywina15}. A full proof 
is available in print as \cite[Proposition 1.4]{lefournlemos21}.

Note that the definition of $G(p)$ can be extended to all $p>3$. For $p=3$, the order of $C_{ns}^+(3)$ is prime to $3$, hence $\{a^3 \bigm\vert a \in C_{ns}(3)\} = C_{ns}(3)$ and $G(3)=C_{ns}^+(3)$. For $p=2$, the definition of $C_{ns}^+(2)$ is slightly different from \eqref{eq:cartan}, and we do not give it here; however, we remark that the possible images of $\rho_{E, 2}$ and $\rho_{E, 3}$ are well-understood, see \cite[Theorems 1.1 and 1.2]{zywina15}.

In their paper \cite{lefournlemos21}, Le Fourn and Lemos studied the case where $\operatorname{Im}\rho_{E,p}$ is conjugate to $G(p)$, ruling out this possibility for all sufficiently large primes:
\begin{theorem}[Le Fourn, Lemos {\cite[Theorem 1.2]{lefournlemos21}}]\label{lefournlemos}\label{intejer}
    Let $\faktor{E}{\Q}$ be an elliptic curve without complex multiplication. If $p>37$ is a prime number such that $\operatorname{Im} \rho_{E,p} \cong G(p)$, then $p < 1.4 \cdot 10^7$ and $j(E) \in \Z$.
\end{theorem}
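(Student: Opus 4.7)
The theorem claims $j(E) \in \Z$ and $p < 1.4 \cdot 10^7$. I would prove integrality first, since it strengthens the effective isogeny bounds used in the second part.

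As a preliminary step, I would unpack the hypothesis $\operatorname{Im}\rho_{E,p} \cong G(p)$ at the group-theoretic level. Since $G(p)$ is an index-3 subgroup of $C_{ns}^+(p)$ generated by the cubes of $C_{ns}(p)$ together with the involution, the quotient set $C_{ns}^+(p)/G(p)$ has three elements and carries a natural $C_{ns}^+(p)$-action. Via $\rho_{E,p}$, this becomes a three-element Galois set over $\Q$ in which the identity coset is a rational fixed point. In modular-curve language, this translates to a rational non-cuspidal point on $X_{G(p)}$, a (generally non-Galois) degree-3 cover of $X_{ns}^+(p)$. The same structure should single out a rational cubic étale $\Q$-algebra attached to $E[p]$, which I would aim to identify explicitly and use as the input for everything that follows.

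To prove $j(E) \in \Z$, I would analyze the cusps of $X_{G(p)}$ and try to run a Runge-type integrality argument. The cubic cover can split cusps of $X_{ns}^+(p)$, and the first verification would be that, for $p \equiv 2 \pmod 3$ (the only case where $G(p)$ arises), the cusps of $X_{G(p)}$ form at least two Galois orbits over $\Q$. If so, one attempts to exhibit a modular unit on $X_{G(p)}$ whose divisor is supported on a proper Galois-stable subset of the cusps; applying Runge's theorem, perhaps in the refined form developed by Bilu--Parent or Le Fourn, would then force $1/j$ to be integral at every rational non-cuspidal point. This is the main technical obstacle: non-split Cartan modular curves resist Runge's method in its classical form, so a delicate cusp and modular-unit analysis on the cubic cover---or, failing that, a formal-immersion or Mazur-style argument at specific cusps---would be required here.

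For the bound on $p$, I would combine the integrality of $j(E)$ with the constraint $\operatorname{Im}\rho_{E,p} \subset C_{ns}^+(p)$. Over the quadratic field $K$ fixed by the index-2 subgroup $C_{ns}(p) \subset C_{ns}^+(p)$, the pair of $\F_{p^2}$-lines in $E[p]$ stabilised by $C_{ns}(p)$ becomes Galois-stable, and a further quadratic twist makes an individual line rational, so that $E$ acquires a $p$-isogeny to a twist of itself over a small explicit extension of $\Q$. Feeding this $p$-isogeny into an effective isogeny estimate of Pellarin or Gaudron--Rémond type, while using the integrality of $j(E)$ to control the Faltings height of $E$, should yield a polynomial-in-$p$ inequality that forces $p$ to be below an explicit constant; careful optimisation of the various parameters is what I would expect to produce the precise numerical bound $p < 1.4 \cdot 10^7$.
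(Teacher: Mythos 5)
This statement is quoted from Le Fourn--Lemos and is not reproved in the paper; the argument being cited runs in the opposite direction from yours. There, integrality of $j(E)$ is obtained by Mazur's formal immersion method, while the bound on $p$ comes from playing a Runge-type estimate $\log|j(E)| = O(\sqrt{p})$ -- obtained from a modular unit on $X_{G(p)}$, whose cusps split into exactly two Galois orbits -- against an effective surjectivity theorem of Gaudron--R\'emond/Le Fourn type, which gives $p \ll \Fheight(E) \approx \frac{1}{12}\operatorname{h}(j(E))$. Your proposal inverts the roles of these two tools, and this creates two genuine gaps. First, Runge's method cannot deliver integrality here: with $r=2$ Galois orbits of cusps, Runge's condition requires the set $S$ of places of non-integrality of $j(E)$ to satisfy $|S|<r=2$, so $j(E)\in\Z$ must already be known \emph{before} the method applies, and its output is a bound on $\operatorname{h}(j(E))$, not integrality. (Your fallback, the formal immersion argument, is in fact the main and only viable route.) Second, the isogeny/surjectivity estimate alone gives only $p \ll \Fheight(E)$, and integrality of $j(E)$ places no upper bound whatsoever on $\Fheight(E)$; without the competing sublinear upper bound $\log|j(E)|\ll\sqrt{p}$ from Runge -- which you have removed from this half of the argument -- no absolute bound on $p$ can follow.

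There is also a structural error in how you propose to produce the isogeny fed into the Gaudron--R\'emond machinery. The two lines stabilised by $C_{ns}(p)$ are $\F_{p^2}$-lines of $E[p]\otimes_{\F_p}\F_{p^2}$, not $\F_p$-subspaces of $E[p]$; they correspond to no subgroup of $E$, they do not become rational over any extension of $\Q$, and no quadratic twist changes this -- that is precisely why the non-split Cartan case resists the Borel and split-Cartan techniques, and why $E$ acquires no $p$-isogeny. The correct construction, used in Section \ref{sec:isogeny} of this paper following Gaudron--R\'emond and Le Fourn, is to work over the quadratic field $K'$ where the image lands in $C_{ns}(p)$, pick a non-scalar $\gamma \in C_{ns}(p)\cap\operatorname{Im}\rho_{E,p}$, and form the degree-$p^2$ isogeny $E\times E \to (E\times E)/\{(x,\gamma x)\mid x\in E[p]\}$, to which the period and height estimates are then applied.
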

In the introduction to \cite{{lefournlemos21}}, the authors describe the difficulties in extending their result to primes smaller than $1.4 \cdot 10^7$.
In particular, they show that for any prime $p>37$ and elliptic curve $\faktor{E}{\Q}$ without CM for which $\operatorname{Im}\rho_{E,p} \cong G(p)$, we have $\log|j(E)| \le \max\{12000, 7\sqrt{p}\} \le 27000$, which together with the fact that $j(E) \in \Z$ shows that there are only finitely many ($\overline{\mathbb{Q}}$-isomorphism classes of) curves to check. However, as they point out, there seems to be no easy way to handle the remaining cases algorithmically. 
For comparison, Bajolet, Bilu, and Matschke \cite{bilu21} stated the following claim, whose proof relied in part on a computation that took several CPU years.
\begin{claim}[Bajolet, Bilu, Matschke]\label{bilucartan}
Let $p \ge 11$ be a prime and let $P \in X_{ns}^+(p)(\Q)$ be a non-CM point such that $j(P) \in \Z$. We have $p>100$.
\end{claim}
Unfortunately, it seems that the proof of this claim contains a gap (see the PhD thesis of the first author \cite[Remark 5.3.10]{furio2025phd} for details). Even so, the scale of the computation needed to analyse the primes up to $100$ illustrates how challenging it would be to extend Theorem \ref{lefournlemos} to all primes up to $1.4 \cdot 10^7$ by brute-force calculation alone.
The aim of this article is to deal with the remaining primes in Theorem \ref{lefournlemos} and to prove the following simple dichotomy in Serre's uniformity question. \begin{theorem}\label{solution}
	Let $\faktor{E}{\Q}$ be an elliptic curve without complex multiplication and let $p>37$ be a prime number.
	The image of $\rho_{E,p}$ is either $\GL(E[p])$ or the normaliser of a non-split Cartan subgroup of $\GL(E[p])$.
\end{theorem}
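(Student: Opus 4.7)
By Theorem~\ref{zywina}, proving Theorem~\ref{solution} is equivalent to ruling out the case $\mathrm{Im}\,\rho_{E,p}\cong G(p)$; in particular we may assume $p\equiv 2\pmod 3$. Theorem~\ref{lefournlemos} handles every $p\ge 1.4\cdot 10^7$ and additionally tells us that in the remaining range one has $j(E)\in\mathbb{Z}$ with $\log|j(E)|\le 27000$. Since $G(p)\subsetneq C_{ns}^+(p)$, such an $E$ would furnish a non-CM rational point on $X_{ns}^+(p)$ with integral $j$-invariant, so Theorem~\ref{bilucartan} excludes every $p\le 100$. The task therefore becomes to rule out $\mathrm{Im}\,\rho_{E,p}\cong G(p)$ for the finite (but still very large) set of primes $100<p<1.4\cdot 10^7$ with $p\equiv 2\pmod 3$.

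The guiding idea is to exploit the cubic character $\chi_p: C_{ns}^+(p)\twoheadrightarrow C_{ns}^+(p)/G(p)\cong \mathbb{Z}/3\mathbb{Z}$: the condition $\mathrm{Im}\,\rho_{E,p}\subseteq G(p)$ is precisely the triviality of $\chi_p\circ\rho_{E,p}$, a cubic character of $G_\mathbb{Q}$. For any prime $\ell\ne p$ of good reduction, $\chi_p(\rho_{E,p}(\mathrm{Frob}_\ell))$ can be computed from the trace $a_\ell$ and the residue $\ell\bmod p$: identifying $C_{ns}(p)\cong\mathbb{F}_{p^2}^\times$, if $\mathrm{Frob}_\ell$ lands in $C_{ns}(p)$ then its $\chi_p$-value is determined by whether a certain element of $\mathbb{F}_{p^2}^\times$ built from $(a_\ell,\ell)$ is a cube, while if $\mathrm{Frob}_\ell$ lands in the non-Cartan coset an analogous twisted cubic condition appears. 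The first step of the proof is to make this criterion completely explicit.

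The second step is to use this criterion to eliminate every $p$ in the range. The cleanest outcome would be: for each such $p$, find a small auxiliary prime $\ell$ such that \emph{every} integer $a_\ell$ satisfying the Hasse bound $|a_\ell|\le 2\sqrt{\ell}$ fails the cubic criterion modulo $p$; this would eliminate $p$ without ever touching any particular curve. When no single $\ell$ suffices, the fallback is to combine several primes $\ell_1,\ldots,\ell_k$ to cut down the admissible Frobenius tuples $(a_{\ell_1}\bmod p,\ldots,a_{\ell_k}\bmod p)$ to a very short list, and then to test each survivor against the finite set of non-CM integer $j$-invariants compatible with $\log|j(E)|\le 27000$, now hopefully reduced by Chinese Remainder sieving modulo $\ell_1\cdots\ell_k$.

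The main obstacle is that the height bound $\log|j(E)|\le 7\sqrt{p}$ still permits astronomically many integer candidates, so any strategy that enumerates $j$-invariants directly is doomed. The whole argument must therefore be engineered so that, for almost all $p$ in the range, the cubic character alone forces a contradiction at the level of $a_\ell$, and so that the set of auxiliary primes $\ell$ needed is small and can be treated uniformly across roughly $10^6$ primes $p$. Striking this balance—making the cubic sieve sharp enough, yet cheap enough to run uniformly across the whole range, and confirming that the few residual $(p,j)$ candidates really do correspond to no non-CM elliptic curve—is where the genuine difficulty of the proof lies.
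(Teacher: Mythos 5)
There is a genuine gap, and it is fatal to the strategy as proposed. Your ``cleanest outcome'' --- finding, for each $p$, an auxiliary prime $\ell$ such that \emph{every} trace $a_\ell$ with $|a_\ell|\le 2\sqrt{\ell}$ fails the cubic criterion --- is impossible: the value $a_\ell=0$ always passes. Indeed, if $\rho_{E,p}(\mathrm{Frob}_\ell)$ has trace $0$ its eigenvalues are $\pm\sqrt{-\ell}$, and since $p\equiv 2\pmod 3$ every element of $\F_p^\times$ is a cube, so $\pm\sqrt{-\ell}$ are cubes in $\F_{p^2}^\times$; this is exactly the case where Frobenius lands in $C_{ns}^+(p)\setminus C_{ns}(p)$, which $G(p)$ permits. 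Consequently no single $\ell$, and no tuple $(\ell_1,\dots,\ell_k)$, can eliminate a prime $p$ by the Hasse-bound sieve alone (the all-zero tuple always survives, and in the nonzero range roughly a third of the traces survive as well). The criterion can only be applied to a \emph{specific} curve (or $j$-invariant, by twist-invariance), for which the actual $a_\ell$ is computable. Your fallback --- testing the surviving Frobenius tuples against all integers $j$ with $\log|j|\le 27000$ --- is hopeless: that is on the order of $e^{27000}$ candidates, and CRT sieving modulo a few small $\ell_i$ reduces this by a bounded factor only. You acknowledge this obstacle yourself, but the proposal contains no mechanism to overcome it, so what you have is the paper's \emph{final verification step} without the reduction that makes it finite and tractable.

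The missing content is precisely the bulk of the paper: (i) local analysis at $p$ and at $\ell\ne p$ showing $p\equiv 2,5\pmod 9$, $p^4\mid j(E)$, and $j(E)=p^d c^3$ with $d\in\{4,5\}$; (ii) a refined effective surjectivity theorem giving $p\ll \Fheight(E)+2\log p$ even for curves of small height; and (iii), most importantly, a sharp Runge-type analysis of the modular unit $U$ on $X_{G(p)}$ --- Weil-type bounds on the character sums $c(s)$, Abel summation, and a direct computation of the partial sums $D(s)$ for all $p<103000$ --- which together force $\log|j(E)|<40$ and $p<20400$. Only then does the Frobenius-eigenvalue cube test become a finite computation (about $6\times 10^5$ pairs, each killed by some $\ell<200$). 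Your identification of the cubic character criterion matches the paper's concluding algorithm, but without a drastic improvement of the height bound there is no route from Le Fourn--Lemos to a proof.
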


For $p \leq 37$, the images of the mod-$p$ representations attached to elliptic curves over $\Q$ have been studied extensively (see \cite{zywina15, MR4468989, balakrishnan23} for the state of the art). Combined with Theorem \ref{solution}, this allows us to show the following theorem.
\begin{theorem}\label{solution for small primes}
    The only prime $p \ne 2,3$ for which there exists a non-CM elliptic curve $\faktor{E}{\Q}$ such that $\operatorname{Im}\rho_{E,p}$ is conjugate to $G(p)$ is $p=5$.
\end{theorem}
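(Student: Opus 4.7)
The proof is a case analysis on the prime $p$, with the three ranges handled by genuinely different tools.

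First, I would observe that for $p > 37$, Theorem \ref{solution} immediately gives the result: the image of $\rho_{E,p}$ is either all of $\GL(E[p])$ or the full normaliser $C_{ns}^+(p)$, and in neither case is it the proper subgroup $G(p)$ of index $3$ in $C_{ns}^+(p)$.

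Next, for the primes $p \equiv 1 \pmod 3$ in the range $5 \leq p \leq 37$ (namely $p \in \{7, 13, 19, 31, 37\}$), I would give a direct determinant obstruction. Since $C_{ns}(p) \cong \F_{p^2}^\times$ and $\det$ corresponds to the norm $\F_{p^2}^\times \to \F_p^\times$, the determinants of cubes in $C_{ns}(p)$ fill out $(\F_p^\times)^3$. The extra coset in $G(p)$ is $\eta \cdot \{a^3 : a \in C_{ns}(p)\}$ with $\eta = \operatorname{diag}(1,-1)$, contributing determinants in $-(\F_p^\times)^3$. When $p \equiv 1 \pmod 3$, the quotient $\F_p^\times/(\F_p^\times)^3$ has order $3$, so the element $-1$ (of order $2$) must lie in $(\F_p^\times)^3$; hence $\det(G(p)) = (\F_p^\times)^3 \subsetneq \F_p^\times$. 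Since $\det\rho_{E,p}$ is the mod-$p$ cyclotomic character and therefore surjective, $\operatorname{Im}\rho_{E,p}$ cannot be conjugate to $G(p)$ in this case.

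This reduces the theorem to checking the five primes $p \in \{5, 11, 17, 23, 29\}$, all congruent to $2 \pmod 3$. For these, I would appeal to the classification of mod-$p$ Galois images of non-CM elliptic curves over $\Q$ as tabulated in \cite{zywina15, MR4468989} and the LMFDB. For each $p \in \{11, 17, 23, 29\}$ one inspects the (finite) list of subgroups of $\GL_2(\F_p)$ that are known to occur as $\operatorname{Im}\rho_{E,p}$ for some non-CM $E/\Q$ and verifies that $G(p)$ is not among them; equivalently, for each non-CM $E/\Q$ with $\operatorname{Im}\rho_{E,p}$ contained in $C_{ns}^+(p)$ (a finite list known explicitly by the work on $X_{ns}^+(11)$ and by Theorem \ref{bilucartan} together with Theorem \ref{lefournlemos} for the remaining primes), one checks via Frobenius traces at a few small auxiliary primes that the image is all of $C_{ns}^+(p)$ and not the index-$3$ subgroup $G(p)$. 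For $p = 5$, I would conversely exhibit an explicit non-CM $E/\Q$ whose mod-$5$ image is conjugate to $G(5)$, which can be read off directly from Zywina's tables.

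The genuinely new content of the proof lies in the case $p \in \{11, 17, 23, 29\}$, and the only potential obstacle is purely verificational: making sure that the finite set of non-CM rational points on $X_{ns}^+(p)$ (or, more precisely, on the cover $X_{G(p)} \to X_{ns}^+(p)$) has been correctly enumerated and that none of them corresponds to image exactly $G(p)$. This is however a bookkeeping matter given the extensive tabulations now available, and the witness for $p = 5$ is routine to produce.
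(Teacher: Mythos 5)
The reduction to $p\le 37$ via Theorem \ref{solution}, the determinant obstruction for $p\equiv 1\pmod 3$ (this is exactly the opening step of the proof of Theorem \ref{-1mod9}), and the explicit witness for $p=5$ all agree with the paper. The problem is the step you yourself identify as carrying ``the genuinely new content'', namely $p\in\{11,17,23,29\}$. Your primary route --- inspect the list of subgroups of $\GL_2(\F_p)$ known to occur as $\operatorname{Im}\rho_{E,p}$ and observe that $G(p)$ is absent --- is not a proof: for these primes no provably complete classification of mod-$p$ images exists (the non-split Cartan case of Serre's question is precisely what remains open there), so absence from the tables of \cite{zywina15, MR4468989} only shows that no curve in the databases realises $G(p)$. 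Your fallback formulation is also off: the non-CM rational points of $X_{ns}^+(p)$ have \emph{not} been enumerated for $p\ge 17$, and for $p=11$ the set is infinite ($X_{ns}^+(11)$ is a genus-one curve of rank $1$), so there is no ``finite list'' of curves with image in $C_{ns}^+(p)$ against which to run Frobenius tests.

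What actually closes these cases, and what the paper does, is the following. For $p\in\{17,23,29\}$: if $\operatorname{Im}\rho_{E,p}\cong G(p)$ then $j(E)\in\Z$ by the formal-immersion part of Le Fourn--Lemos --- here one must invoke their original \cite[Theorem 1.2]{lefournlemos21}, which covers these small primes with the single exception $p=11$, and not Theorem \ref{lefournlemos} as restated in this paper, which assumes $p>37$ --- and then Theorem \ref{bilucartan} forces $p>100$, a contradiction; the set of curves to test is empty and no trace computation is needed. For $p=11$ the integrality step fails, and one needs \cite[Theorem 1.6 (i)]{zywina15}, i.e.\ the explicit determination of the rational points of the relevant level-$11$ modular curves (in effect the degree-$3$ cover $X_{G(11)}$ of $X_{ns}^+(11)$, not $X_{ns}^+(11)$ itself). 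So your outline is repairable, but as written the central step rests on a classification that is not known to be complete, and $p=11$ requires more than bookkeeping.
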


Similar to the observation of Le Fourn and Lemos in \cite[Theorem 1.3]{lefournlemos21}, Theorems \ref{solution} and \ref{solution for small primes} also completely settle a question of Najman \cite{najman18}, improving \cite[Theorem 1.3]{lefournlemos21}. Let $d \ge 1$ be a positive integer and let $I_\Q(d)$ be the set of prime numbers $p$ for which there exists a rational elliptic curve $\faktor{E}{\Q}$ without complex multiplication and an isogeny $\varphi : E \to E'$ of degree $p$ defined over a field $K$ of degree $[K:\Q] \le d$. From Mazur's work \cite{mazur78} we know that $I_\Q(1) = \{2,3,5,7,11,13,17,37\}$, and Najman's question concerns the sets $I_{\mathbb{Q}}(d)$ for $d \geq 2$. As a consequence of \cite[Proposition 1.4]{lefournlemos21} and Theorem \ref{solution for small primes} we obtain:
\begin{theorem}
    For every positive integer $d$ we have
    $$I_\Q(d) = I_\Q(1) \cup \{p \text{ prime} \mid p \le d-1\}.$$
\end{theorem}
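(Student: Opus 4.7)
The plan is to prove the equality $I_\Q(d) = I_\Q(1) \cup \{p \text{ prime} \mid p \le d-1\}$ by checking the two inclusions separately. The inclusion $\supseteq$ is essentially elementary: on the one hand, $I_\Q(1) \subseteq I_\Q(d)$ because a $p$-isogeny defined over $\Q$ remains defined over any extension; on the other hand, for a prime $p \le d-1$ I would fix any non-CM elliptic curve $E/\Q$ and observe that $\Gal(\overline{\Q}/\Q)$ acts on the set of $p+1$ lines in $E[p]$, so that the stabiliser of any single line $L$ has index at most $p+1 \le d$. Its fixed field is then a number field $K$ with $[K:\Q] \le d$ over which the quotient isogeny $E \to E/L$ of degree $p$ is defined.

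For the reverse inclusion, I would take a prime $p \in I_\Q(d) \setminus I_\Q(1)$ and aim to show $p \le d-1$. Mazur's theorem forces $p > 37$. I would then fix a non-CM elliptic curve $E/\Q$ together with a $p$-isogeny $\varphi : E \to E'$ defined over a field $K$ with $[K : \Q] \le d$: the kernel $L := \ker\varphi \subset E[p]$ is an $\F_p$-line stabilised by $\Gal(\overline{K}/K)$, hence its orbit under $\operatorname{Im}\rho_{E,p}$ has cardinality at most $[K:\Q] \le d$. On the other hand, Theorem \ref{zywina} combined with Theorem \ref{solution for small primes} (which rules out $G(p)$ for $p > 5$) forces $\operatorname{Im}\rho_{E,p}$ to be either all of $\GL_2(\F_p)$ or a conjugate of $C_{ns}^+(p)$. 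Both of these groups act transitively on $\mathbb{P}^1(\F_p)$: this is classical for $\GL_2(\F_p)$, and for $C_{ns}^+(p) \supseteq C_{ns}(p)$ it follows from the identification $C_{ns}(p) \cong \F_{p^2}^\times$, under which $C_{ns}(p)$ acts simply transitively on the $p^2-1$ non-zero vectors of $\F_p^2 \cong \F_{p^2}$ and therefore transitively on the $p+1$ lines. The orbit of $L$ thus has size exactly $p+1$, and combining the two bounds yields $p+1 \le d$, as desired.

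No serious obstacle arises: all the substantial work has already gone into Theorems \ref{zywina} and \ref{solution for small primes}, and what remains is merely to translate the resulting constraint on $\operatorname{Im}\rho_{E,p}$ into a lower bound on the size of Galois orbits on the set of lines of $E[p]$.
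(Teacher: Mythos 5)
Your strategy---translating the constraint on $\operatorname{Im}\rho_{E,p}$ into a lower bound for the size of the Galois orbit of the kernel line in $\mathbb{P}^1(\F_p)$---is exactly the argument the paper intends (it defers to Najman's proof of Theorem 4.1 in \cite{najman18}, updated with the new input on $G(p)$), and both inclusions are set up correctly: the $\supseteq$ direction and the transitivity of $C_{ns}^+(p)$ on lines via $C_{ns}(p)\cong\F_{p^2}^\times$ are fine. There is, however, a genuine gap in the reverse inclusion: the claim that ``Mazur's theorem forces $p>37$'' is false. Since $I_\Q(1)=\{2,3,5,7,11,13,17,37\}$, a prime $p\in I_\Q(d)\setminus I_\Q(1)$ could a priori be $19$, $23$, $29$ or $31$, and for these primes Theorem \ref{zywina}, which is stated only for $p>37$, gives you nothing. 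As written, your argument does not show, for example, that $19\notin I_\Q(d)$ for $2\le d\le 19$.

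To close the gap you must control $\operatorname{Im}\rho_{E,p}$ for $p\in\{19,23,29,31\}$ as well. For such $p$ the image is not contained in a Borel subgroup (otherwise $E$ would admit a rational $p$-isogeny and $p$ would lie in $I_\Q(1)$), not in an exceptional maximal subgroup (Serre, valid for $p>13$), and not in the normaliser of a split Cartan (Bilu--Parent--Rebolledo \cite{bilu13}, since none of these primes is $13$). If it is contained in $C_{ns}^+(p)$, the group-theoretic classification \cite[Proposition 1.4]{lefournlemos21}---which, unlike Theorem \ref{zywina} as stated, is not restricted to $p>37$---forces the image to be either $C_{ns}^+(p)$ or conjugate to $G(p)$, and Theorem \ref{solution for small primes} excludes the latter. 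Only after this case analysis is the image transitive on $\mathbb{P}^1(\F_p)$ in all remaining cases, so that the orbit of $\ker\varphi$ has size exactly $p+1$ and the inequality $p+1\le d$ follows. With that supplement your proof is complete and coincides with the one the paper alludes to.
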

This is an unconditional version of \cite[Theorem 4.1]{najman18}, and the proof relies on the same arguments.

\medskip

We now discuss the ideas behind the proof of Theorem \ref{solution}. While the general approach is similar to that of Le Fourn and Lemos, based on Runge's method for modular curves, many of the details differ, and we improve the estimates involved in many ways, both quantitatively and qualitatively. 

We begin by reformulating Theorem \ref{solution} in terms of rational points on modular curves. 
Let $X_{G(p)}$ be the compactified modular curve defined over $\Q$ with function field $\Q(X(p))^{G(p)}$ (under the canonical isomorphism $\Gal\left(\faktor{\Q(X(p))}{\Q(X(1))}\right) \cong \operatorname{GL}_2(\F_p)$). The set of complex points $X_{G(p)}(\C)$ can be identified with the quotient of $\uhp^* = \uhp \cup \Q \cup \{\infty\}$ by the action of the group $\Gamma = \{\gamma \in \operatorname{SL}_2(\Z) \mid \gamma \pmod p \in G(p)\}$. More generally, given a field $K \subseteq \C$, the set $X_{G(p)}(K) \setminus \{\text{cusps}\}$ can be identified with the set of $\overline{K}$-isomorphism classes of elliptic curves $\faktor{E}{K}$ such that the image of the Galois representation $\rho_{E,p}\left(\Gal\left(\faktor{\overline{K}}{K}\right)\right)$ is contained in $G(p)$ up to conjugacy.
Using this interpretation for the rational points of $X_{G(p)}$, it is not difficult to see that Theorem \ref{solution} is equivalent to the following statement.
\begin{theorem}\label{thm: reformulation}
    Let $p>37$ be a prime number. Every rational point $P \in X_{G(p)}(\Q)$ is either a cusp or a CM point.
\end{theorem}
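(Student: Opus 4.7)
My plan is to extend the approach of Le Fourn--Lemos by sharpening their application of Runge's method to the modular curve $X_{G(p)}$. Suppose $P \in X_{G(p)}(\Q)$ is a non-cuspidal, non-CM rational point, corresponding to an elliptic curve $E/\Q$ with $\operatorname{Im}\rho_{E,p} \cong G(p)$. By Theorem \ref{lefournlemos} we may assume $j(E) \in \Z$ and $p < 1.4 \cdot 10^7$; combined with Theorem \ref{bilucartan} applied to the image of $P$ under the natural degree-$3$ cover $X_{G(p)} \to X_{ns}^+(p)$, this further restricts matters to the range $100 < p < 1.4 \cdot 10^7$, which is what must be excluded.

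The first technical step is to pin down the cuspidal structure of $X_{G(p)}$. Since $G(p)$ has index $3$ in $C_{ns}^+(p)$ and $X_{ns}^+(p)$ carries a single rational cusp, the fibre of the above cover splits into at most three cusps of $X_{G(p)}$, whose $\Gal(\overline{\Q}/\Q)$-action can be read off from the description of $G(p)$ as a subgroup of $\GL_2(\F_p)$. Crucially, this set of cusps is \emph{not} a single Galois orbit -- which is precisely the setting in which Runge's method becomes applicable (unlike on $X_{ns}^+(p)$ itself). The next step is to build explicit modular units on $X_{G(p)}$, naturally as products of Siegel functions indexed by coset representatives of $G(p)$ in $\GL_2(\F_p)$, arranged so that their divisors are supported on a proper, Galois-stable sub-orbit of the cuspidal locus.

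Evaluating such a unit at $P$ and comparing the resulting $p$-adic and archimedean sizes, while carefully controlling the $q$-expansions of both $j$ and the Siegel products, produces an upper bound of the shape $\log|j(E)| \le F(p)$ which I aim to make quantitatively and qualitatively sharper than the $\max\{12000, 7\sqrt{p}\}$ estimate of Le Fourn--Lemos. The expected source of improvement is twofold: exploiting the richer cuspidal geometry of $X_{G(p)}$ (rather than passing through $X_{ns}^+(p)$) to minimise the archimedean contribution to the Runge inequality, and using sharper Fourier-coefficient bounds for the Siegel products so that the analytic error terms become negligible in the relevant range of $p$. Combined with the integrality $j(E) \in \Z$, a sufficiently small $F(p)$ either closes the argument outright or reduces it to a short, fully explicit check of finitely many candidate integer $j$-invariants, further constrained by their reductions modulo small auxiliary primes.

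The main obstacle is precisely the construction of such a modular unit: the cuspidal divisor class group of $X_{G(p)}$ has limited rank, so there is little flexibility in arranging the support of a Runge-admissible function, and one must simultaneously satisfy the Runge condition, minimise the Runge sum, and keep all $q$-expansion constants small enough to yield a usable bound uniformly over the whole range $100 < p < 1.4 \cdot 10^7$. This rules out purely asymptotic arguments and demands completely effective estimates at every stage, which is where the delicate bookkeeping -- and, I expect, the bulk of the actual work -- will lie.
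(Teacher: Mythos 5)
Your outline follows the same overall route as the paper (Runge's method on $X_{G(p)}$, using modular units supported on one of the two Galois orbits of cusps, starting from $j(E)\in\Z$ and $100<p<1.4\cdot 10^7$), but as written it is a research plan rather than a proof: you explicitly defer ``the bulk of the actual work'', and none of the quantitative ingredients that make the argument close is supplied. More importantly, there is a structural gap, not just missing bookkeeping. Runge's method can only ever produce an upper bound of the form $\log|j(E)|\le F(p)$ with $F(p)\to\infty$ (in \cite{lefournlemos21} one gets $F(p)\ll\sqrt{p}$; the paper's sharpening via the Weil--Perel'muter bound on the character sums $c(s)$ gives $F(p)\ll p^{1/4}$). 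Such a bound can never ``close the argument outright'', and for $p$ near $1.4\cdot 10^7$ it still leaves on the order of $e^{F(p)}$ candidate integers $j$, which is hopelessly intractable even with $F(p)\ll p^{1/4}$. To bound $p$ absolutely one needs a \emph{competing} inequality in the opposite direction, $p\ll \log|j(E)|$, coming from an effective surjectivity/isogeny theorem (Theorem \ref{effiso} in the paper, in the spirit of Masser--W\"ustholz and Gaudron--R\'emond). Your proposal never mentions this ingredient, and without it the reduction to a finite, tractable check does not go through.

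Even granting that step, the paper needs several further inputs you do not anticipate in order to bring the bound down from $\log|j(E)|<162$ (which already requires the $p^{1/4}$ Runge bound plus the surjectivity theorem) to the computationally feasible $\log|j(E)|<40$: the local analysis at $p$ and at $\ell\ne p$ showing $p\equiv 2,5\pmod 9$, $p^4\mid j(E)$ and $j(E)=p^d c^3$ with $d\in\{4,5\}$; and the Abel-summation rewriting of $\log R$ together with a numerical computation of the partial sums $D(s)$ for all relevant primes $p<\boundonprimes$. These are not optional refinements -- the paper's own estimates show that the purely analytic Runge bound is too weak for the final enumeration. So while your high-level strategy is the right one (and is essentially the paper's), the proof as proposed would stall both at the absence of the effective surjectivity input and at the final finite check.
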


The proof of Theorem \ref{lefournlemos} by Le Fourn and Lemos is based on two fundamental steps: first, they show that an elliptic curve satisfying the hypothesis of Theorem \ref{lefournlemos} has integral $j$-invariant (via the formal immersion method of Mazur). Second, they prove an upper bound on $|j(E)|$ by combining Runge's method
with an effective surjectivity theorem showing that $\operatorname{Im} \rho_{E, p} = \GL(E[p])$ for all $p$ greater than an explicit bound depending on $j(E)$. 

The first step works in complete generality: Theorem \ref{lefournlemos} gives the integrality of $j(E)$ as soon as $p>37$, so -- in order to prove Theorem \ref{solution} -- we can assume $j(E) \in \mathbb{Z}$. Our main contribution lies in a much sharper upper bound on $|j(E)|$, which we achieve through three main innovations:
\begin{itemize}
    \item We prove a sharp effective surjectivity theorem (in the spirit of \cite{MR1209248}, \cite{MR3437765}, and \cite[Theorem 5.2]{lefourn16}) by refining the proof of the effective isogeny theorem of Gaudron and R\'emond \cite{gaudron-remond}. We obtain substantially improved constants by showing that certain auxiliary subvarieties considered in the proof are all trivial in our case (see Lemma \ref{B[sigma]=0}).
    \item Second, we perform a detailed analysis of the local properties of the representations $\rho_{E, p}$. This analysis yields several improvements, such as ruling out all primes $p \equiv -1 \pmod{9}$ and proving that $p^4$ divides $j(E)$. Furthermore, we show that $j(E)$ can be written as $p^k c^3$ for some integer $c$. When we eventually reduce the proof of Theorem \ref{solution} to an explicit calculation, this latter relation has the effect of dividing by three \textit{on a logarithmic scale} the number of tests we have to perform, significantly reducing the computational component of our approach.
    
\item Finally, the third and most significant innovation is our much more detailed study of the modular units on the curve $X_{G(p)}$. The main ingredients that lead to our improved bound on $\log |j(E)|$ are sharp bounds on character sums, which essentially draw on Weil's method to treat Kloosterman sums \cite{weil48}, an idea based on Abel's summation to amplify certain cancellation phenomena among roots of unity, and direct computations to fully exploit the extent of these cancellations.
All of these improvements are crucial to lowering the bound on $\log|j(E)|$ to values that are computationally tractable, and the result we obtain is sharp enough that the final computation takes less than two minutes of CPU time.
\end{itemize}

We now elaborate more on some of these ingredients.
The first component of our approach is Theorem \ref{effiso}, which gives an upper bound (in terms of the Faltings height of $E$) on the largest prime for which the image of the representation $\rho_{E,p}$ is contained in the normaliser of a non-split Cartan subgroup. The main improvement with respect to the existing literature consists in the fact that our version applies to all elliptic curves, even those with very small height. This is essential for our application, because the estimate $\log |j(E)| \leq 27000$ obtained in \cite{lefournlemos21} already shows that the Faltings height of $E$ is not too large (and our improved bounds later show that in fact $\log |j(E)| < 162$, leading one to consider curves with very small height). We point out that, while Theorem \ref{effiso} is expressed in terms of the Faltings height of $E$, this can be related very precisely to the Weil height of $j(E)$ (hence to $\log |j(E)|$) thanks to Theorem \ref{heights-ineq}.

Next, we describe the results we obtain by studying the behaviour of the restriction of $\rho_{E,p}$ to inertia subgroups. By considering the image of $p$-inertia, we prove in Theorem \ref{-1mod9} that, for $p \equiv -1 \pmod 9$, the image of the representation $\rho_{E,p}$ is not contained in the group $G(p)$ up to conjugacy. Since we already know from Theorem \ref{zywina} that it suffices to consider primes $p \equiv 2 \pmod 3$, this result immediately rules out one third of the remaining primes. We also show that this statement cannot be improved by purely local arguments (see Remark \ref{rmk: local arguments cannot be extended further}). This reduction in the number of primes to consider turns out to be quite useful, though not essential, in a later step of the proof.
Studying the image of $\ell$-inertia for $\ell \ne p$ we further prove that, if $\operatorname{Im} \rho_{E, p}$ is conjugate to $G(p)$, then the $\ell$-adic valuation of the $j$-invariant is divisible by 3, hence $j(E)$ is almost a cube (that is, it is an integer of the form $p^k c^3$ with $c \in \mathbb{Z}$). Moreover, using the theory of the canonical subgroup of Lubin \cite{lubin79} and Katz \cite{katz73}, we show that $j(E)$ is divisible by $p^4$ (Proposition \ref{canonicalsgr}). This gives the inequality $\log p \le \frac{1}{4} (\log|j(E)| - 3\log |c|)$, which for small values of $j(E)$ is better than the bound on $p$ coming from our surjectivity theorem, and in all cases further constrains the possible values of $j(E)$ for a given $p$. All of these results are new with respect to \cite{lefournlemos21}; in particular, our use of the canonical subgroup to study the arithmetic properties of $j$ seems to be novel.

Finally, we review the central point of the argument, which leads to the sharp upper bound $\log |j(E)| < 40$. While we follow a strategy that is broadly similar to the proof of Theorem \ref{lefournlemos} by Le Fourn and Lemos, we improve their results by several new means. A key part of the argument, based on Runge's method, concerns the size of the values of a suitable modular unit $U$ when evaluated at integral points of $X_{G(p)}$: since we know that $U(P)$ is integral and divides $p^3$ (where $P$ is the point on the modular curve corresponding to our elliptic curve $E$), this can be leveraged to obtain an upper bound on $\log |j(E)|$. The Fourier expansion of the specific modular unit that we employ involves certain arithmetic quantities related to the distribution of cubes in the finite field $\F_{p^2}$, encoded by complicated sums of roots of unity.
We employ ideas from Weil's strategy for the study of Kloosterman sums \cite{weil48} to take advantage of cancellations among these roots of unity, proving Proposition \ref{logq<p}, which gives a bound of the form $\log |j(E)| \ll p^{1/4}$, where the approach of \cite{lefournlemos21} only yields $\log |j(E)| \ll p^{1/2}$ (in both cases, the implied constants can be made explicit). 
Combined with the surjectivity theorem, which gives the competing bound $p \ll \log |j(E)|$, this result allows us to reduce the primes under consideration to those smaller than $1.03 \cdot 10^5$ and to show that $\log|j(E)| < 162$. This estimate is already hugely better than the bound $\log|j(E)| \le 27000$ obtained in \cite{lefournlemos21}.
We point out that we also correct some minor arithmetic errors in \cite{lefournlemos21}, which we review in Section \ref{sec:modular} and which would lead to a slightly worse bound in Theorem \ref{lefournlemos}. However, this fact does not affect our proof, as we rework the arguments completely.

Unfortunately, the bound $\log|j(E)| < 162$ turns out to be still too large to be of practical use in a direct computation. Using Abel's summation we then rewrite the sums of roots of unity we already estimated in a different form, which we expect to amplify as much as possible the cancellation phenomena. We are not able to prove cancellation analytically (at least, not to the extent that we need), but luckily, the quantities under consideration are sums of roots of unity depending only on $p$, and in particular, they are independent of the hypothetical elliptic curve $E$ whose mod-$p$ representation has image conjugate to $G(p)$. We can then determine their exact values by a direct numerical calculation, which -- given our previous results -- we need only perform for primes $p <1.03 \cdot 10^5$ with $p \equiv 2, 5 \pmod{9}$. In Remark \ref{rmk: heuristics for the bound s<p} we give a heuristic explanation of why one should expect this calculation to yield bounds of the quality we find in practice.
Having already reduced the number of primes to a reasonable amount is crucial at this stage, since the cost of computing the relevant sums of roots of unity for a given prime $p$ takes time proportional to $p \log p$, and it would be impossible to carry out this calculation for all primes up to the original bound of $1.4 \cdot 10^7$. Re-inserting this new numerical bound in our study of modular units leads to Proposition \ref{logq<30}, which gives $\log|j(E)| < 40$. Note that here we depart significantly from the strategy of Bilu-Parent and Le Fourn-Lemos: our absolute bound on $|j|$ comes from a detailed study of modular units, while the bound on $p$ serves mainly as a tool to simplify certain analytic estimates (which in principle could also be proved by different means).

We are now left with a significantly smaller number of $j$-invariants to consider. Since the condition $\operatorname{Im}\rho_{E,p} \cong G(p)$ is invariant under twists, we can study each $j$-invariant separately by writing down any elliptic curve $\faktor{E}{\mathbb{Q}}$ with that $j$-invariant. As we also have an absolute bound on $p$, this reduces the proof of Theorem \ref{solution} to a finite computation, which would however still require considerable effort. However, from our local analysis, we know that $j(E)$ is of the form $p^k \cdot c^3$ with $k \in \{4,5\}$ ($k=6$ cannot occur, see Lemma \ref{p|j}), so the bound on $\log |j(E)|$ gives a corresponding bound on $\log |c|$, which is now very tractable (by way of example, for $p=2003$ and $k=4$ our bounds show that $|c| \leq 31$, thus leaving only 62 $j$-invariants to test -- $j=0$ corresponds to a CM elliptic curve).
We conclude the proof by checking directly that none of the remaining pairs $(E, p)$ satisfies $\operatorname{Im} \rho_{E, p} \cong G(p)$.

We conclude this introduction by briefly describing the contents of each section. In Section \ref{sec:preliminaries} we discuss some preliminary results, including an asymptotically optimal comparison between the Faltings height of rational elliptic curves and their modular height (Theorem \ref{heights-ineq}). 
In Section \ref{sec:local} we study the local properties of the representations $\rho_{E, p}$, proving all the results mentioned above. 
In Section \ref{sec:isogeny} we show our effective surjectivity theorem (Theorem \ref{effiso}). Section \ref{sec:modular} is the core of the article: it contains in particular our analysis of modular units, leading to the improved bounds discussed above. 
Finally, in Section \ref{sec:Conclusion} we describe the details of the short computation that concludes the proofs of Theorems \ref{solution} and \ref{solution for small primes}.

\paragraph{Acknowledgements.} We are grateful to Samuel Le Fourn for many helpful discussions and his interest in this work and to Pascal Autissier for an interesting conversation about partial sums of arithmetic functions. We would also like to thank \'Eric Gaudron, Filip Najman, Ga\"el Rémond, and Andrew Sutherland for their comments on an earlier version of this article. D.L. was supported by the University of Pisa through grant PRA-2022-10 and by MUR grant PRIN-2022HPSNCR (funded by the European Union project Next Generation EU) and is a member of the INdAM group GNSAGA.

\section{Preliminaries}\label{sec:preliminaries}

In this section we collect some auxiliary results needed for the proof of Theorem \ref{solution}. In particular, we provide explicit comparisons between the stable Faltings height and the modular height of an elliptic curve over the rationals.

We start by recalling the following lemma, which can be found in \cite[Lemma 3.5]{bilu13}.

\begin{lemma}[Bilu, Parent, Rebolledo]\label{logsum}
	For every $x \in (0,1)$ we have
	$$-\sum_{k=1}^{\infty} \log(1-x^k) < - \frac{\pi^2}{6\log x}.$$
\end{lemma}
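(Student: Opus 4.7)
My approach would be to convert the infinite sum on the left-hand side into a form that can be compared term-by-term with the Basel series $\sum 1/n^2$. Setting $t = -\log x > 0$ so that the target bound becomes $\pi^2/(6t)$, I would apply the Mercator expansion $-\log(1-y) = \sum_{n \geq 1} y^n/n$ to $y = x^k = e^{-kt}$, then interchange the two sums (permissible by absolute convergence on $(0,1)$). Summing the inner geometric series in $k$ converts the double sum into
$$-\sum_{k=1}^{\infty} \log(1-x^k) \;=\; \sum_{n=1}^{\infty} \frac{1}{n}\cdot\frac{x^n}{1-x^n} \;=\; \sum_{n=1}^{\infty} \frac{1}{n(e^{nt}-1)}.$$

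The remaining step is to compare each term $\frac{1}{n(e^{nt}-1)}$ with the corresponding Basel term $\frac{1}{tn^2}$. The elementary strict inequality $e^y > 1 + y$ for $y > 0$, applied with $y = nt$, gives $e^{nt} - 1 > nt$, whence $\frac{1}{n(e^{nt}-1)} < \frac{1}{tn^2}$. Summing over $n \geq 1$ yields exactly
$$-\sum_{k=1}^{\infty} \log(1-x^k) \;<\; \frac{1}{t}\sum_{n=1}^{\infty}\frac{1}{n^2} \;=\; \frac{\pi^2}{6t} \;=\; -\frac{\pi^2}{6\log x},$$
with strictness preserved because every term-by-term bound is strict. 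There is no real obstacle in this proof; it is a short manipulation of series. The fact that the constant $\pi^2/6$ is sharp is transparent from the asymptotic equivalence $\frac{1}{n(e^{nt}-1)} \sim \frac{1}{tn^2}$ as $t \to 0^+$, which shows that both sides of the inequality have the same leading-order behaviour as $x \to 1^-$.
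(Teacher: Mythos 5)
Your proof is correct and complete. The paper does not prove this lemma itself but simply cites it from Bilu--Parent--Rebolledo \cite[Lemma 3.5]{bilu13}; your argument (expand $-\log(1-x^k)$ via the Mercator series, interchange the absolutely convergent double sum to get $\sum_{n\ge1}\frac{1}{n(e^{nt}-1)}$ with $t=-\log x$, and bound termwise using $e^{nt}-1>nt$) is exactly the standard proof of that cited result, so there is nothing to add.
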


The next two lemmas will be useful for the parts of our argument that rely on the complex interpretation of modular curves.
\begin{lemma}\label{lemminotecnico1}
	Let $p$ be a positive integer. Let $\tau \in \uhp$ be a point in the standard fundamental domain for the action of $\operatorname{SL}_2(\Z)$, let $q=e^{2\pi i \tau}$, and let $q^{\frac{1}{p}}$ be the $p$-th root of $q$ given by $(e^{2\pi i \tau})^{\frac{1}{p}} = e^\frac{2\pi i \tau}{p}$. We have
    $$|1-q^\frac{1}{p}| \le 1-|q|^\frac{1}{p} + |q|^\frac{1}{2p} \frac{\pi}{p}.$$
\end{lemma}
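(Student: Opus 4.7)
The plan is to decompose $q^{1/p}$ into its modulus and argument and apply the triangle inequality. Writing $\tau = x + iy$ with $|x| \leq 1/2$ and $y \geq \sqrt{3}/2$ (the defining conditions of the standard fundamental domain), we obtain
\[
q^{1/p} = e^{2\pi i \tau/p} = r \, e^{i\theta}, \quad \text{where } r = e^{-2\pi y/p} = |q|^{1/p} \in (0,1) \text{ and } \theta = \tfrac{2\pi x}{p}, \text{ so } |\theta| \leq \tfrac{\pi}{p}.
\]

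Next I would apply the triangle inequality in the form
\[
|1 - q^{1/p}| = |1 - r e^{i\theta}| \leq |1-r| + |r - re^{i\theta}| = (1-r) + r \, |1 - e^{i\theta}|,
\]
together with the elementary estimate $|1 - e^{i\theta}| = 2\sin(|\theta|/2) \leq |\theta| \leq \pi/p$. Combining these yields $|1-q^{1/p}| \leq (1-r) + r \cdot \pi/p$. Since $0 < r < 1$, we have $r < \sqrt{r} = |q|^{1/(2p)}$, and therefore
\[
|1 - q^{1/p}| \leq (1-r) + r \cdot \tfrac{\pi}{p} < (1-r) + |q|^{1/(2p)} \cdot \tfrac{\pi}{p},
\]
which is the claimed bound. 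The strict inequality requires only a brief check: when $x \neq 0$ the inequality $\sin(|\theta|/2) < |\theta|/2$ is strict, and when $x = 0$ the bound collapses to $1-r < (1-r) + |q|^{1/(2p)} \pi/p$, which holds since $|q|^{1/(2p)} > 0$.

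I do not foresee any real obstacle here; the statement is essentially a clean application of the triangle inequality, with the mild observation that replacing $r$ by $\sqrt{r}$ on the right-hand side weakens the bound (presumably the weaker form is what is convenient for later use in the paper). The only minor subtlety is justifying the strict inequality in the boundary case $x=0$, which is handled by the positivity of the error term $|q|^{1/(2p)} \pi/p$.
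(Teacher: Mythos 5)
Your proof is correct and follows essentially the same strategy as the paper: write $q^{1/p}=|q|^{1/p}e^{i\theta}=re^{i\theta}$ with $|\theta|\le \pi/p$ and bound the angular contribution by $|\theta|$. The only difference is mechanical --- the paper computes $|1-re^{i\theta}|^2=(1-r)^2+2r(1-\cos\theta)$ exactly and applies $\sqrt{a^2+b^2}<|a|+|b|$, which produces the factor $|q|^{1/(2p)}$ directly, whereas you apply the triangle inequality through the real point $r$ to get the (slightly sharper) intermediate bound $(1-r)+r|\theta|$ and then weaken $r$ to $\sqrt{r}=|q|^{1/(2p)}$; both routes are valid and your handling of the strictness is fine.
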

\begin{proof}
	Since $\tau$ is in the standard fundamental domain, we have $|\Re\{\tau\}| \le \frac{1}{2}$, hence we can write $q^\frac{1}{p}=|q|^\frac{1}{p} e^{i\theta}$ with $|\theta| \le \frac{\pi}{p}$. By using that $\sqrt{a^2+b^2} \le |a|+|b|$ for all real numbers $a$ and $b$, we obtain
	\begin{align*}
		|1-q^\frac{1}{p}| &= \sqrt{(1-|q|^\frac{1}{p}\cos \theta)^2 + |q|^\frac{2}{p} \sin^2 \theta} \\
		&= \sqrt{1- 2|q|^\frac{1}{p}\cos \theta + |q|^\frac{2}{p}} \\
		&= \sqrt{(1-|q|^\frac{1}{p})^2 + 2|q|^\frac{1}{p}(1- \cos\theta)} \\
		&\le 1-|q|^\frac{1}{p} + |q|^\frac{1}{2p}|\theta|,
	\end{align*}
	and the lemma follows. 
\end{proof}

\begin{lemma}\label{lemminotecnico2}
	Let $p>1$ be an integer and let $x \in (0,1)$. We have
	\begin{enumerate}
		\item $1-x^\frac{1}{p} < \frac{|\log x|}{p}$;
		\item $\frac{x^\frac{1}{p}}{1-x^\frac{1}{p}} < \frac{p}{|\log x|}$.
	\end{enumerate}
\end{lemma}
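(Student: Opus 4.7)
The plan is to reduce both inequalities to the single well-known fact that $\log t < t - 1$ for every positive real $t \neq 1$, applied with $t = x^{1/p}$ and with $t = x^{-1/p}$ respectively.

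For (1), I would set $y = x^{1/p}$, so that $y \in (0,1)$ and $|\log y| = \frac{|\log x|}{p}$. The desired inequality then reads $1 - y < |\log y|$, i.e.\ $\log y < y - 1$, which is the restriction of $\log t \leq t - 1$ to $t \in (0,1)$ (where the inequality is strict).

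For (2), again setting $y = x^{1/p}$, the statement becomes $\frac{y}{1-y} < \frac{1}{|\log y|}$. Since $y \in (0,1)$ we have $1 - y > 0$ and $|\log y| = -\log y > 0$, so clearing denominators we must show $-y \log y < 1 - y$, equivalently $\log(1/y) < \frac{1}{y} - 1$. This is exactly $\log t < t - 1$ applied at $t = 1/y > 1$, where again the inequality is strict.

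Neither step presents an obstacle: the only point worth being careful about is keeping track of the signs (so that dividing by $|\log y|$ or by $1-y$ preserves the direction of the inequality), and recording that the strictness of $\log t < t-1$ for $t \neq 1$ is exactly what gives strict inequalities in both claims. The whole proof should fit in a few lines and require no further ingredients beyond the standard logarithm inequality.
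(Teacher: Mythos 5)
Your proof is correct and is essentially identical to the one in the paper, which likewise derives both parts from $\log y < y-1$ applied at $y = x^{1/p}$ and $y = x^{-1/p}$ respectively. Nothing to add.
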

\begin{proof}
	Both results are obtained from the inequality $\log y <y-1$, with $y=x^\frac{1}{p}$ and $y=x^{-\frac{1}{p}}$ respectively.
\end{proof}

The following lemma is not elementary, since for its proof we need to rely on both \cite[Theorem 1.2]{lefournlemos21} and the determination of the rational points on the curve $X_{ns}^+(17)$ \cite{balakrishnan23}.

\begin{lemma}\label{lemma:theREALintejer}
	Let $\faktor{E}{\Q}$ be an elliptic curve without complex multiplication. If $p>5$ is a prime number such that $\operatorname{Im} \rho_{E,p} \cong G(p)$, where $G(p)$ is the group defined in Theorem \ref{zywina}, then $p \ge 19$ and $j(E) \in \Z$.
\end{lemma}
\begin{proof}
We first show $p \ge 19$ and $p \neq 37$. Theorem \ref{zywina} gives $p \equiv 2 \pmod 3$, and by assumption we have $p>5$, so it only remains to rule out the primes $11$ and $17$. The case $p=11$ cannot occur by \cite[Theorem 1.6(i)]{zywina15}, while the case $p=17$ cannot occur by \cite[Theorem 1.2]{balakrishnan23}. Now suppose by contradiction that $j(E)$ is not an integer: \cite[Theorem 1.2]{lefournlemos21} then gives $p \in \{7,11,13,17,37\}$, which contradicts what we just showed.
\end{proof}

\subsection{Faltings height and modular height of rational elliptic curves}

We now give an upper bound on the stable Faltings height of an elliptic curve over $\Q$ in terms of its $j$-invariant. Any elliptic curve $\faktor{E}{\Q}$ can also be considered as an elliptic curve  over $\C$, so there exists a complex number $\tau \in \uhp$ such that $E(\C) \cong \faktor{\C}{\Z \oplus \tau\Z}$. We fix such a $\tau$ and set $q=e^{2\pi i \tau}$. Our results in this section refine the properties of heights explained in \cite{silverman86}.

\begin{definition}
    We will consider the \emph{standard fundamental domain for the action of $\operatorname{SL}_2(\Z)$} as $$\mathcal{F} := \left\lbrace z \in \uhp : \Re\{z\} \in \left( -\frac{1}{2}, \frac{1}{2}\right], |z| > 1 \right\rbrace \cup \left\lbrace e^{i\theta} : \frac{\pi}{3} \le \theta \le \frac{\pi}{2} \right\rbrace.$$
\end{definition}

We begin with the following theorem, which combines \cite[Corollary 2.2]{bilu11runge} with \cite[Lemma 2.5]{pazuki19}.
\begin{theorem}\label{estimate-qj}
	Let $\tau \in \uhp$ be in the standard fundamental domain $\mathcal{F}$ and let $\faktor{E}{\C}$ be the corresponding elliptic curve. Set $q=e^{2\pi i \tau}$. We have $$\log|j(E)| \le \max\{\log3500, |\log|q||+\log2 \}$$ 
	and $$|\log|q|| \le \log(|j(E)|+970.8) < \log|j(E)| + \frac{970.8}{|j(E)|}.$$
\end{theorem}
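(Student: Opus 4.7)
My plan is to derive both bounds from the $q$-expansion
$$j(\tau) = \frac{1}{q} + 744 + \sum_{n \ge 1} c_n q^n,$$
with positive integer Fourier coefficients $c_n$, using that $\tau \in \mathcal{F}$ forces $\Im(\tau) \ge \sqrt{3}/2$, hence $|q| \le e^{-\pi\sqrt{3}} \approx 4.33 \cdot 10^{-3}$.

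For the first inequality, I would split on the size of $|q|$. Pick a threshold $r_0 \in (0, e^{-\pi\sqrt{3}}]$: if $|q| \le r_0$, separating the $1/q$ term and using $\sum c_n |q|^n \le \sum c_n r_0^n$ gives
$$|j(\tau)| \le \frac{1}{|q|} + \Bigl(744 + \sum_{n \ge 1} c_n r_0^n\Bigr),$$
and choosing $r_0$ so that the bracketed constant is at most $1/r_0$ yields $|j| \le 2/|q|$, i.e.\ $\log|j| \le |\log|q|| + \log 2$. (Numerically, any $r_0$ of order $10^{-3}$ works, since the tail sum is tiny in that regime.) In the complementary range $r_0 \le |q| \le e^{-\pi\sqrt{3}}$, $\tau$ lies in a compact subregion of $\mathcal{F}$ where $|j|$ is continuous and bounded; a uniform estimate via the $q$-expansion evaluated at $|q| = e^{-\pi\sqrt{3}}$ produces a numerical constant which one then has to check is at most $3500$.

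For the second inequality, I rearrange the $q$-expansion as $1/q - j(\tau) = -744 - \sum c_n q^n$, giving
$$\frac{1}{|q|} - |j(\tau)| \le \Bigl|744 + \sum_{n \ge 1} c_n q^n\Bigr|,$$
so the problem reduces to bounding the right-hand side by $970.8$ uniformly on $\mathcal{F}$. The naive triangle-inequality bound is hopeless here, since already $c_1 e^{-\pi\sqrt{3}} > 800$ and $\sum c_n e^{-n\pi\sqrt{3}}$ runs into the thousands; one has to exploit the coupling between $|q|$ and $\arg(q) = 2\pi\Re(\tau)$ enforced by $\mathcal{F}$. Concretely, when $|q|$ is close to its maximum $e^{-\pi\sqrt{3}}$, the condition $|\tau| \ge 1$ pushes $|\Re(\tau)|$ toward $1/2$, so $q$ lies near the negative real axis and the series $\sum c_n q^n$ becomes nearly alternating (at $\tau = e^{i\pi/3}$ one computes explicitly $\sum c_n q^n = e^{\pi\sqrt{3}} - 744 \approx -513$, so the magnitude of the displayed right-hand side is about $231$). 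I would apply the maximum modulus principle to reduce the estimate to the boundary of $\mathcal{F}$, on which $j$ is real, and then treat the three boundary pieces separately: on the vertical sides $\Re(\tau) = \pm 1/2$, the quantity $744 + \sum c_n q^n$ is a real alternating series that can be truncated with explicit tail control, while on the arc $|\tau| = 1$ the extremes are pinned down by $j(e^{i\pi/3}) = 0$ and $j(i) = 1728$.

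The main obstacle is precisely the sharp constant $970.8$: recovering it requires a genuine use of the geometry of $\mathcal{F}$, since the purely $|q|$-based Fourier tail bound overshoots by almost an order of magnitude. The final stated inequality $\log(|j| + 970.8) < \log|j| + 970.8/|j|$, valid whenever $|j| > 0$, is immediate from $\log(1+t) < t$ applied with $t = 970.8/|j|$.
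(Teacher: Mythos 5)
The paper does not prove this statement from scratch: it simply quotes the first inequality from \cite[Corollary 2.2]{bilu11runge} and the second from \cite[Lemma 2.5]{pazuki19}, adding only the elementary step $\log(x+a)-\log x<a/x$. Your attempt to rederive both bounds from the $q$-expansion is therefore more ambitious than what the paper does, and your treatment of the first inequality (threshold $r_0\approx 10^{-3}$, plus a crude bound on the compact complementary range) is numerically viable, as is the final logarithmic step.

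However, your argument for the second inequality has a genuine gap: the statement you reduce to is false. You claim it suffices to bound $\bigl|744+\sum_{n\ge1}c_nq^n\bigr|\le 970.8$ uniformly on $\mathcal{F}$, but at $\tau=i$ one has $q=e^{-2\pi}$ and
$$744+\sum_{n\ge1}c_nq^n \;=\; j(i)-\tfrac{1}{q}\;=\;1728-e^{2\pi}\;\approx\;1192.5\;>\;970.8,$$
so no amount of maximum-modulus or boundary analysis can establish the uniform bound you are aiming for. The inequality $1/|q|\le|j|+970.8$ survives at such points only because $1/|q|-|j|$ is there very negative, i.e.\ the triangle-inequality reduction discards exactly the sign information that saves you. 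The correct (and geometry-free) argument is a case split: if $1/|q|\le 970.8$ the inequality is trivial since $|j|\ge0$; if $|q|<1/970.8$, the reverse triangle inequality gives $|j|\ge 1/|q|-744-\sum_{n\ge1}c_n|q|^n\ge 1/|q|-744-\sum_{n\ge1}c_n(970.8)^{-n}>1/|q|-970.8$, the tail summing to about $226.6$. This also explains the constant: $970.8\approx 744+\sum_{n\ge1}c_n(970.8)^{-n}$, so no coupling between $|q|$ and $\arg q$, and no restriction to $\partial\mathcal{F}$, is needed.
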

\begin{proof}
	The first inequality follows from \cite[Corollary 2.2]{bilu11runge}, while the second one is obtained from \cite[Lemma 2.5]{pazuki19} using the fact that $\log(x+a)-\log x = \log\left(1+\frac{a}{x}\right) < \frac{a}{x}$.
\end{proof}

\begin{corollary}\label{estimate-qj-cor}
	In the setting of Theorem \ref{estimate-qj}, if $|j(E)| \ge 3500$, then
	$$\log|j(E)| - \log2 \le |\log|q|| \le \log|j(E)| + 0.245.$$
\end{corollary}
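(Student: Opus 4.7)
The corollary is a direct consequence of Theorem \ref{estimate-qj}, and my plan is to extract each of the two inequalities from one of the two estimates in that theorem, without reproving anything.

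For the lower bound $\log|j(E)| - \log 2 \le |\log|q||$, I would start from the first displayed estimate $\log|j(E)| \le \max\{\log 3500, |\log|q|| + \log 2\}$ and use the hypothesis $|j(E)| \ge 3500$ to rule out the first alternative in the maximum. Indeed, whenever $|j(E)| > 3500$ strictly, we have $\log|j(E)| > \log 3500$, so the maximum must be attained by the second entry; this gives $\log|j(E)| \le |\log|q|| + \log 2$, and a trivial rearrangement produces the claimed inequality. The boundary case $|j(E)| = 3500$ is a measure-zero exception which can be absorbed by continuity, or dispatched by simply treating the hypothesis as strict (the situation of actual interest in subsequent sections).

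For the upper bound $|\log|q|| \le \log|j(E)| + 0.245$, I would use the second estimate of Theorem \ref{estimate-qj}, namely $|\log|q|| \le \log(|j(E)| + 970.8)$, and rewrite its right-hand side as $\log|j(E)| + \log\bigl(1 + 970.8/|j(E)|\bigr)$. The error term is a strictly decreasing function of $|j(E)|$, so under the hypothesis $|j(E)| \ge 3500$ it is maximised at the endpoint, giving $\log\bigl(1 + 970.8/|j(E)|\bigr) \le \log(4470.8/3500)$. A direct numerical evaluation confirms that this quantity is strictly less than $0.245$, and in fact this is precisely how the constant in the statement is chosen.

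I do not expect any genuine obstacle: the whole proof is a few lines of bookkeeping once Theorem \ref{estimate-qj} is in hand. The only point requiring a moment of care is the boundary case $|j(E)| = 3500$ in the lower bound, and this is essentially a non-issue for the applications that follow.
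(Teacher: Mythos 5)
Your proposal is correct and follows essentially the same route as the paper: the upper bound comes from rewriting $\log(|j(E)|+970.8)$ as $\log|j(E)|+\log\bigl(1+\tfrac{970.8}{|j(E)|}\bigr)\le\log|j(E)|+\log\bigl(1+\tfrac{970.8}{3500}\bigr)<0.245+\log|j(E)|$, and the lower bound from ruling out the first alternative in the maximum of Theorem \ref{estimate-qj}. If anything you are more careful than the paper, which silently ignores the boundary case $|j(E)|=3500$.
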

\begin{proof}
	We only need to notice that $|\log|q||<\log|j(E)|+\log\left(1+\frac{970.8}{3500}\right)<\log|j(E)|+0.245$.
\end{proof}
Before stating the precise comparisons between heights that we need, we record the following fact that we will use often.

\begin{theorem}\label{qisreal}
	Let $\faktor{E}{\mathbb{R}}$ be an elliptic curve isomorphic to $\faktor{\C}{\Z \oplus \tau\Z}$ and let $q=e^{2\pi i \tau}$. If $\tau$ is in the standard fundamental domain $\mathcal{F}$ for the action of $\operatorname{SL}_2(\Z)$, then either $q \in \mathbb{R}$, or $j(E) \in (0,1728)$ (equivalently, $|\tau|=1$).
\end{theorem}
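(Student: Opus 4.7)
The plan is to combine the fact that $E/\mathbb{R}$ forces $j(E) \in \mathbb{R}$ with a precise analysis of where $j$ takes real values on the fundamental domain $\mathcal{F}$. The starting observation is that the $q$-expansion $j(\tau) = q^{-1} + 744 + \sum_{n \geq 1} c_n q^n$ has real (in fact integer) coefficients. Since $e^{2\pi i (-\bar\tau)} = \overline{e^{2\pi i \tau}}$, complex conjugation of the $q$-expansion yields $\overline{j(\tau)} = j(-\bar\tau)$, and so $j(\tau) \in \mathbb{R}$ if and only if $j(-\bar\tau) = j(\tau)$. By $\operatorname{SL}_2(\Z)$-invariance of $j$, this in turn is equivalent to $-\bar\tau$ lying in the same $\operatorname{SL}_2(\Z)$-orbit as $\tau$.

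Since $\mathcal{F}$ is a strict fundamental domain (each orbit meets its interior in at most one point, with explicit identifications on the boundary), the next step is to enumerate the $\tau \in \mathcal{F}$ whose reflection $-\bar\tau$ across the imaginary axis is $\operatorname{SL}_2(\Z)$-equivalent to $\tau$. Writing $\tau = a + ib$, so that $-\bar\tau = -a + ib$, the boundary identifications of $\mathcal{F}$ produce exactly three possibilities: either $a = 0$ (and then $-\bar\tau = \tau$), or $a = \tfrac{1}{2}$ (and then $-\bar\tau = T^{-1}\tau$ for the translation $T : z \mapsto z+1$), or $|\tau| = 1$ (and then $-\bar\tau = S\tau$ for the inversion $S : z \mapsto -1/z$). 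Any other $\tau$ in the interior of $\mathcal{F}$ satisfies $-\bar\tau \not\sim \tau$, so $j(\tau) \notin \mathbb{R}$.

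In the first two cases one computes directly that $q = e^{-2\pi b}$ (respectively $q = -e^{-2\pi b}$), both of which are real. In the third case, $|\tau| = 1$ forces $j(\tau) \in [0, 1728]$ (a standard fact about the restriction of $j$ to the unit arc), with the endpoints $j(i) = 1728$ and $j(\rho) = 0$, attained at $\tau = i$ and $\tau = \rho = e^{i\pi/3}$. These two exceptional points also lie in the first two cases ($\Re(i) = 0$ and $\Re(\rho) = \tfrac{1}{2}$), so they again yield $q \in \mathbb{R}$. Consequently the only way that $q \notin \mathbb{R}$ is to have $|\tau| = 1$ with $\tau \ne i, \rho$, which gives $j(E) \in (0, 1728)$, as required.

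No step of this argument should present a serious obstacle; the only point to be careful about is the precise boundary structure of $\mathcal{F}$ as defined above (which includes $\Re(\tau) = \tfrac{1}{2}$ but excludes $\Re(\tau) = -\tfrac{1}{2}$, and contains the arc $|\tau| = 1$ only for $\theta \in [\pi/3, \pi/2]$), so that each $\tau$ can be matched with its equivalent representative $-\bar\tau$ via a unique element of $\operatorname{SL}_2(\Z)$ and no orbit is double-counted.
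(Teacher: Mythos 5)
Your proof is correct and follows essentially the same route as the paper: both arguments reduce to identifying the real locus of $j$ inside the fundamental domain as the union of the imaginary axis, the line $\Re\tau=\tfrac12$, and the unit arc, and then reading off $q\in\mathbb{R}$ on the first two pieces and $j\in[0,1728]$ on the third. The only difference is that the paper cites this classification from Silverman (Proposition V.2.1 of \emph{Advanced Topics}), whereas you reprove it via the reflection $\tau\mapsto-\bar\tau$ and the real $q$-expansion of $j$ — which is in substance the same argument as in the cited reference.
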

\begin{proof}
    By \cite[Proposition V.2.1]{ataoec} we know that the $j$-function gives a bijection between $\mathbb{R}$ and the set $\mathcal{C} = C_1 \cup C_2 \cup C_3$, where $C_1 = \{it \mid t\ge 1\}$, $C_2 = \left\lbrace e^{i\theta} \mid \frac{\pi}{3} \le \theta \le \frac{\pi}{2} \right\rbrace$ and $C_3 = \left\lbrace \frac{1}{2} + it \mid t \ge \frac{\sqrt{3}}{2} \right\rbrace$. Moreover, by continuity, it is easy to notice that $j(C_1)=[1728,+\infty)$, $j(C_2)=[0,1728]$ and $j(C_3)=(-\infty,0]$. Hence, if $j(E) \not\in (0,1728)$, then $\Re \tau \in \left\lbrace 0, \frac{1}{2} \right\rbrace$, which concludes the proof.
\end{proof}

In the next result, as well as in the rest of the paper, we denote by $\Fheight(E)$ the stable Faltings height of an elliptic curve $E$, with the normalisation of \cite[Section 1.2]{deligne85}.

\begin{theorem}\label{heights-ineq}
    Let $\faktor{E}{\Q}$ be an elliptic curve with stable Faltings height $\Fheight(E)$. Let $\tau \in \uhp$ be the point in the standard fundamental domain $\mathcal{F}$ such that $E(\C) \cong \faktor{\C}{\Z \oplus \tau\Z}$, and set $q=e^{2\pi i \tau}$.
	\begin{enumerate}
		\item If $|j(E)| > 3500$, then 
  \begin{equation}\label{eq: heights-ineq-1}
  \frac{\operatorname{h}(j(E))}{12} - \frac{1}{2}\log\log|j(E)| -0.406 < \Fheight(E) < \frac{\operatorname{h}(j(E))}{12} - \frac{1}{2}\log\log|j(E)| +0.159,
  \end{equation}
		where $\operatorname{h}(x)$ is the logarithmic Weil height of $x$ (i.e., if $x=\frac{a}{b}$ with $(a,b)=1$, we set $\operatorname{h}(x)=\log\max\{|a|,|b|\}$).
		\item If $j \in \Z$, then
\begin{equation}\label{eq: heights-ineq-2}		
  \Fheight(E) < -\frac{1}{12}\log|q| - \frac{1}{2}\log|\log|q|| -\frac{1}{2}\log 2 - \frac{\pi^2}{3\log|q|}.
  \end{equation}
		\item We always have
\begin{equation}\label{eq: heights-ineq-3}		
  \Fheight(E) > -\frac{1}{12}\log|q| - \frac{1}{2}\log|\log|q|| -\frac{1}{2}\log2 -\frac{2|q|}{1-|q|}.
	\end{equation}
 \end{enumerate}
\end{theorem}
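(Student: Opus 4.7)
The plan is to start from the standard expression for the stable Faltings height as the sum of a logarithmic discriminant term and an archimedean correction. With Deligne's normalisation,
\begin{equation*}
12\,\Fheight(E) = \log|\Delta^{\mathrm{stab}}(E)| - \log\bigl((2\pi)^{12}|\Delta(\tau)|(2\operatorname{Im}\tau)^6\bigr),
\end{equation*}
where the key arithmetic input is that $|\Delta^{\mathrm{stab}}(E)|$ equals the denominator $b$ of $j(E)=a/b$ written in lowest terms (in particular, $b=1$ when $j(E)\in\Z$). Substituting the product expansion $\Delta(\tau)=(2\pi)^{12}q\prod_{n\geq 1}(1-q^n)^{24}$ and the identity $\operatorname{Im}\tau=-\log|q|/(2\pi)$ yields the master formula
\begin{equation*}
12\,\Fheight(E) = \log|b| - \log|q| - 6\log|\log|q|| - 24\sum_{n\geq 1}\log|1-q^n| - 6\log 2.
\end{equation*}

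Each of the three parts now reduces to estimating the sum $\sum_{n\geq 1}\log|1-q^n|$ and (for part (1)) replacing $-\log|q|$ by $\log|j(E)|$. For part (3), the elementary bound $\sum\log|1-q^n|\leq\sum\log(1+|q|^n)\leq |q|/(1-|q|)$, together with the non-negativity of $\log|b|$, gives the claimed universal lower bound after dividing by $12$. For part (2), where $j(E)\in\Z$ so that $b=1$, I bound $-\sum\log|1-q^n|$ from above via $|1-q^n|\geq 1-|q|^n$ and Lemma \ref{logsum}, which yields $-\sum\log|1-q^n|<-\pi^2/(6\log|q|)$, and hence the claimed upper bound.

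For part (1), the strategy is to convert the $|q|$-side of the master formula into the $j$-side. Corollary \ref{estimate-qj-cor} gives $\log|j(E)|-\log 2\leq|\log|q||\leq\log|j(E)|+0.245$, which, combined with the identity $h(j(E))=\log|a|=\log|b|+\log|j(E)|$ (valid for $|j(E)|\geq 1$), shows that $\log|b|+|\log|q||$ equals $h(j(E))$ up to an additive error in $[-\log 2, +0.245]$. Similarly, $\log|\log|q||-\log\log|j(E)|$ is controlled by a small correction of order $1/\log|j(E)|$. Inserting the upper and lower bounds on $\sum\log|1-q^n|$ from the previous paragraph then yields the two inequalities of (1).

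The main technical obstacle is the careful book-keeping of the specific numerical constants: the value $-\tfrac{1}{2}\log 2$ in (2) and (3) traces back to the $(2\pi)$-factors in Deligne's normalisation (producing the $-6\log 2$ in the master formula), while the asymmetric constants $-0.406$ and $+0.159$ in (1) arise from combining the asymmetric estimates of Corollary \ref{estimate-qj-cor}, the contribution $\pi^2/(6|\log|q||)$ from Lemma \ref{logsum}, and the error $O(1/\log|j(E)|)$ from comparing $\log|\log|q||$ with $\log\log|j(E)|$, all evaluated at the threshold $|j(E)|=3500$. A subsidiary subtlety is the identification $|\Delta^{\mathrm{stab}}(E)|=b$, which at primes of additive reduction requires knowing that the valuation of the discriminant of a semistable model equals $\max(0,-v_p(j))$, so that contributions from additive primes disappear after passing to the stable model.
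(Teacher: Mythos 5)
Your proposal follows essentially the same route as the paper: the same master formula (the paper's equation \eqref{eqheight}, obtained from Silverman's expression for the Faltings height after identifying the norm of the stable discriminant with the denominator of $j$), the same two estimates for $\sum_{n}\log|1-q^n|$ (Lemma \ref{logsum} for the upper bounds in (1) and (2), the comparison with $\sum\log(1+|q|^n)$ for the lower bounds), and the same use of Corollary \ref{estimate-qj-cor} to convert between $|\log|q||$ and $\log|j(E)|$ in part (1). The only slip is in your opening display, whose archimedean factor $(2\pi)^{12}|\Delta(\tau)|(2\operatorname{Im}\tau)^6$ does not yield the $-6\log 2$ of your (correct) master formula -- the normalisation actually used in the paper is $|\Delta(\tau)|\left(\pi^{-1}\operatorname{Im}\tau\right)^6$ with $\Delta(\tau)=(2\pi)^{12}q\prod_{n\ge 1}(1-q^n)^{24}$ -- but since everything downstream is derived from the master formula, which is correct, this does not affect the argument.
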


\begin{remark}
	It is well known that $|\Fheight(E)- \frac{\operatorname{h}(j)}{12}| < \log(\operatorname{h}(j) + 1) + O(1)$ (see \cite{silverman86}), but the above theorem also gives a bound in the opposite direction, namely, $|\Fheight(E)- \frac{\operatorname{h}(j)}{12}| > \frac{1}{2}\log\log|j| + O(1)$.
\end{remark}

\begin{proof}
	By \cite[Proposition 1.1]{silverman86} we have
\begin{equation}\label{eq: Silverman's formula for the Faltings height}
 \Fheight(E)= \frac{1}{12[K:\Q]} \left( \log|N_{\faktor{K}{\Q}}(\Delta_{\faktor{E}{K}})| - \sum_{v \in M_K^\infty} n_v \log\left(|\Delta(\tau_v)|(\pi^{-1}\Im\{\tau_v\})^6\right) \right),
 \end{equation}
	where $\faktor{K}{\Q}$ is a finite Galois extension over which $E$ has semistable reduction everywhere, $\Delta_{\faktor{E}{K}}$ is the minimal discriminant of $E$ over $K$, $M_K^\infty$ is the set of the Archimedean places of $K$, $\tau_v$ is an element of $\uhp$ such that $E(\overline{K}_v) \cong \faktor{\C}{\Z + \tau_v \Z}$, and $\Delta(\tau)=(2\pi)^{12} q \prod_{n=1}^\infty (1-q^n)^{24}$, with $q=e^{2\pi i \tau}$. Note that \cite[Proposition 1.1]{silverman86} is formulated with a different normalisation of the Faltings height, but it is easy to convert from Silverman's convention to Deligne's:  specifically, the height $h$ in \cite[Proposition 1.1]{silverman86} satisfies $h(E) = \Fheight(E) - \frac{1}{2} \log \pi$. This difference is reflected in the factor $\pi^{-1}$ in equation \eqref{eq: Silverman's formula for the Faltings height}.

	Let $e_p,f_p$ be respectively the ramification index and inertia degree in $K$ of the rational prime $p$, and let $r_p$ be the number of distinct primes of $\mathcal{O}_K$ dividing $p$ (the numbers $e_p,f_p$ only depend on $p$ since $\faktor{K}{\Q}$ is a Galois extension). Given that $j=j(E)$ is a rational number, we have
	\begin{align*}
		N_{\faktor{K}{\Q}}(\Delta_{\faktor{E}{K}}) &= \prod_{\substack{Q \subset \OK \\ \text{prime}}} \left| \frac{\OK}{Q^{\max\{0,-v_Q(j)\}}} \right| = \prod_{p \text{ prime}}\left(p^{f_p \max\{0,-e_p v_p(j)\}}\right)^{r_p} \\
		&= \prod_{p \text{ prime}}\left( p^{\max\{0,-v_p(j)\}} \right)^{[K:\Q]} = \prod_{p \text{ prime}}\left( \max\{1,\|j\|_p\} \right)^{[K:\Q]},
	\end{align*}
	where the first equality holds by \cite[Table 15.1]{aoec} and the fact that $E$ has semistable, hence in particular multiplicative, reduction at primes dividing the discriminant.
	For every $v \in M_K^\infty$ we can assume that $\tau_v$ belongs to the standard fundamental domain $\mathcal{F}$, so, since $E$ is defined over $\Q$, we may use the same $\tau_v=\tau$ for every $v$. Writing $\Im\{\tau\} = -\frac{\log|q|}{2\pi} = \frac{|\log |q||}{2 \pi}$, we have
	\begin{equation}\label{eqheight}
	    12\Fheight(E) = \sum_{p \text{ prime}}\log\max\{1,\|j\|_p\} -\log|q| -6\log2 -6\log|\log|q|| -24\sum_{n=1}^\infty \log|1-q^n|.
	\end{equation}
	Using the fact that, for every $z \in \C$ such that $|z|<1$, we have $|\log|1-z|| \le -\log|1-|z||$, from Lemma \ref{logsum} we obtain
	\begin{align*}
		-24\sum_{n=1}^\infty \log|1-q^n| < - 24\sum_{n=1}^{\infty} \log(1-|q|^n) < -\frac{4\pi^2}{\log|q|}.
	\end{align*}
	Replacing in equation \eqref{eqheight}, we get
	$$\Fheight(E) < \frac{1}{12} \left( \sum_{p \text{ prime}}\log\max\{1,\|j\|_p\} -\log|q| -6\log2 -\frac{4\pi^2}{\log|q|} -6\log|\log|q|| \right). $$
	We note that for $j \in \Z$ we have $\|j\|_p \le 1$ for every prime $p$, and \eqref{eq: heights-ineq-2} follows.
 
	To prove the upper bound in \eqref{eq: heights-ineq-1}, we note that $\log|j| = \log\max\{1,|j|\}$, and using Corollary \ref{estimate-qj-cor} together with the assumption $|j(E)| \geq 3500$ we obtain
	\begin{align*}
		\Fheight(E) &< \frac{1}{12} \left( \sum_{p \text{ prime}}\log\max\{1,\|j\|_p\} +\log\max\{1,|j|\} +0.245 \right. \\
		& \qquad \qquad \left. - 6\log2 - \frac{4\pi^2}{\log|q|} - 6\log|\log|q|| \right) \\
		&= \frac{\operatorname{h}(j)}{12} +\frac{0.245}{12} -\frac{1}{2}\log2 -\frac{\pi^2}{3\log|q|} -\frac{1}{2}\log|\log|q|| \\
		&< \frac{\operatorname{h}(j)}{12} -0.326 +\frac{\pi^2}{3(\log|j|-\log2)} -\frac{1}{2}\log(\log|j|-\log2) \\
		&< \frac{\operatorname{h}(j)}{12} -\frac{1}{2}\log\log|j| +0.159.
	\end{align*}
	On the other hand, using that $\log(1+x) < x$ for every $x>0$, we have
	$$-2\sum_{n=1}^\infty \log|1-q^n| \ge -2\sum_{n=1}^\infty \log(1+|q|^n) > -2\sum_{n=1}^\infty |q|^n = -\frac{2|q|}{1-|q|}.$$
Noting that $\log\max\{1,\|j\|_p\} \ge 0$ for every $p$, we see from equation \eqref{eqheight} that the inequality in \eqref{eq: heights-ineq-3} holds. To prove the lower bound in \eqref{eq: heights-ineq-1}, we use $\log|j| > -\log|q|>\log|j|-\log2$ (Theorem \ref{estimate-qj}) in equation \eqref{eqheight} to obtain
	\begin{align*}
		\Fheight(E) &> \frac{1}{12}\sum_{p \text{ prime}}\log\max\{1,\|j\|_p\} + \frac{1}{12}\log|j| - \frac{7}{12}\log2 -\frac{1}{2}\log\log|j| - \frac{4}{|j(E)|-2} \\
		&> \frac{\operatorname{h}(j)}{12} - \frac{1}{2}\log\log|j| - \frac{7}{12}\log2 - \frac{2}{1749},
	\end{align*}
	which concludes the proof.
\end{proof}

\begin{remark}
	The above argument even gives $$\Fheight(E) \le \frac{\operatorname{h}(j)}{12} - \frac{1}{2}\log\log|j| -\frac{1}{2}\log2 + o(1) \; \text{ as $|j| \to \infty$,}$$ yielding a better constant term as $j$ grows. Explicitly, one has
	$$\Fheight(E) < \frac{\operatorname{h}(j)}{12} -\frac{1}{2}\log\log|j| -\frac{1}{2}\log2 + \left(\frac{\log2}{2}+\frac{\pi^2}{3}\right)\frac{1}{\log|j| - \log2} + \frac{970.8}{|j|},$$
	where we have used $\log(x-\log2)-\log x = - \sum_n \frac{(\log 2)^n}{nx^n} > -\sum_n \frac{(\log2)^n}{x^n} = -\frac{\log2}{x-\log2}$, with $x=\log|j|$.
\end{remark}

\begin{remark}\label{minimalheight}
    Minimising the function $$-\frac{1}{12}\log x - \frac{1}{2}\log|\log x| - \frac{1}{2}\log2 - \frac{2x}{1-x}$$ over the interval $(0,e^{-\pi\sqrt{3}}]$, we obtain that for every elliptic curve $\faktor{E}{\Q}$ we have $\Fheight(E)>-0.74885$. This fits well with the computation by Deligne of the absolute minimum of the height \cite[pag. 29]{deligne85}. With our normalisation, Deligne has shown that the minimum of $\Fheight(E)$ is approximately $-0.74875$, attained for the elliptic curve with $j=0$, for which $|q|=e^{-\pi\sqrt{3}}$. Moreover, this is the minimum height for elliptic curves over any number field.
\end{remark}
\begin{remark}
	For $j \in \Z$, Theorem \ref{heights-ineq} implies that $$\Fheight(E) = -\frac{1}{12}\log|q| -\frac{1}{2}\log|\log|q|| -\frac{1}{2}\log2 + O\left(\frac{1}{\log|q|}\right)$$
	as $|q| \to 0$.
\end{remark}

\section{Local analysis}\label{sec:local}

Let $\faktor{E}{\Q}$ be an elliptic curve without CM such that, for some prime $p$, we have $\operatorname{Im}\rho_{E,p} \subseteq G(p)$ up to conjugacy.
In this section, we consider $E$ as an elliptic curve over $\mathbb{Q}_\ell$ for various primes $\ell$ (including $\ell=p$), and study the representation $\rho_{E, p}$ upon restriction to $\Gal\left(\faktor{\overline{\Q_\ell}}{\Q_\ell}\right)$, considered as a decomposition subgroup of $\Gal\left(\faktor{\overline{\Q}}{\Q}\right)$.
This allows us to obtain the congruence $p \equiv 2, 5 \pmod{9}$ (Theorem \ref{-1mod9}) and to prove various arithmetic properties of $j(E)$: in particular, we show that $j(E)$ is of the form $p^k c^3$ for some integers $c$ and $k \geq 0$  (Lemma \ref{lem:jisalmostacube}) and that $p^4 \mid j(E)$ (Proposition \ref{canonicalsgr}). 

\subsection{The image of the inertia subgroups}

We start by considering the image via $\rho_{E,p}$ of the $\ell$-inertia subgroup, for all primes $\ell$. We will draw different conclusions in the cases $\ell \ne p$ and $\ell = p$. We first consider the case $\ell = p$, which allows us to show that, for $p \equiv -1 \pmod9$, the image of the residual representation modulo $p$ cannot be isomorphic to the subgroup $G(p)$, and hence $\operatorname{Im} \rho_{E,p} \supseteq C_{ns}^+(p)$ up to conjugacy. This is a refinement of \cite[Proposition 1.4]{lefournlemos21} (Theorem \ref{zywina}), itself an exposition of results of Zywina \cite[Proposition 1.13]{zywina15}, which states that if $\operatorname{Im}\rho_{E,p} \subseteq G(p)$, then $p \equiv 2 \pmod 3$. With respect to Theorem \ref{zywina}, the congruence $p \not \equiv -1 \pmod{9}$ allows us to exclude approximately one-third of the primes from our subsequent analysis,
since we are left with only two of the three classes $2, 5, -1$ modulo 9.
\begin{theorem}\label{-1mod9}
	Let $\faktor{E}{\Q}$ be an elliptic curve without complex multiplication. If $p \ne 2,3$ is a prime number such that $\operatorname{Im} \rho_{E,p}$ is conjugate to $G(p)$, then $p \equiv 2 \pmod 3$ and $p \not\equiv -1 \pmod9$.
\end{theorem}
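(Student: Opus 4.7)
The plan is to analyse $\rho_{E, p}$ restricted to a decomposition group at $p$, and in particular the image of the $p$-inertia subgroup $I_p$ on $E[p]$, under the assumption that $\operatorname{Im}\rho_{E, p}$ is conjugate into $G(p)$.

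The congruence $p \equiv 2 \pmod 3$ will follow from a determinant computation. Since every element of $C_{ns}(p) \cong \F_{p^2}^\times$ has determinant equal to its norm to $\F_p^\times$, one finds $\det(G(p)) = (\F_p^\times)^3 \cup (-1)(\F_p^\times)^3$. If $p \equiv 1 \pmod 3$ is prime with $p > 3$, then $6 \mid p-1$, so cubes form an index-$3$ subgroup of $\F_p^\times$ and $-1$ is itself a cube; hence $\det(G(p))$ is a proper subgroup of $\F_p^\times$, contradicting the surjectivity of $\det \rho_{E, p} = \chi_{\mathrm{cyc}}$.

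For the $p \not\equiv -1 \pmod 9$ part, I would first constrain the reduction type of $E$ at $p$. The group $C_{ns}^+(p)$ admits no line in $\F_p^2$ fixed by all its elements, so $E[p]$ has no $\Gal(\overline{\Q_p}/\Q_p)$-invariant one-dimensional subspace; this rules out both (potentially) ordinary and (potentially) multiplicative reduction at $p$, where the connected-\'etale filtration or the Tate uniformisation would provide one. Good supersingular reduction is also incompatible: in that case $\rho_{E, p}(I_p^{\mathrm{tame}})$ is the full non-split Cartan $C_{ns}(p)$ of order $p^2 - 1$ (via the level-$2$ fundamental characters), whereas $G(p) \cap C_{ns}(p)$ has order only $(p^2 - 1)/3$. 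So $E$ must have additive reduction at $p$ with potentially good supersingular reduction, acquired over a tamely ramified extension $L/\Q_p$ of degree $e \in \{2, 3, 4, 6\}$, corresponding respectively to Kodaira types $I_0^*$, $IV/IV^*$, $III/III^*$, and $II/II^*$.

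The heart of the argument is then the explicit computation of $\rho_{E, p}(I_p^{\mathrm{tame}})$ for each such type. The image is a cyclic subgroup of $C_{ns}^+(p)$ whose generator can be described in terms of the level-$2$ fundamental character $\theta_2$ of $\Q_p$, using the compatibility $\theta_2^{\Q_p}|_{I_L} = (\theta_2^L)^e$ with the fundamental character of $L$ together with the determinant constraint $\det \rho_{E, p}|_{I_p^{\mathrm{tame}}} = \theta_2^{p+1}$. The image lies inside $G(p) \cap C_{ns}(p)$ (the cubes of $C_{ns}(p)$) precisely when a divisibility condition of the form $3 \mid 1 - (p^2-1)/e$ holds; a case analysis modulo $9$ shows that this never holds for $e \in \{2, 4\}$, holds for $e = 3$ exactly when $p \equiv 2 \pmod 9$, and holds for $e = 6$ exactly when $p \equiv 5 \pmod 9$. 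Hence for $p \equiv -1 \pmod 9$ no admissible $e$ works, giving the contradiction. I expect the main difficulty to be the explicit identification of the inertial representation for each Kodaira type: for those in which the natural twist does not descend to a character valued in $\F_p^\times$ (for instance $e \in \{3, 6\}$ when $p \equiv 2 \pmod 3$), one must argue that $\rho_{E, p}(I_p^{\mathrm{tame}})$ is nevertheless forced to sit inside $C_{ns}(p)$, and not in the non-trivial coset of $C_{ns}^+(p)$.
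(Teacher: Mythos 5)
Your strategy coincides with the paper's: both parts are proved by restricting $\rho_{E,p}$ to a decomposition group at $p$ (the determinant argument for $p\equiv 2\pmod 3$ is essentially identical, since $-1=(-1)^3$ is always a cube, so $\det(G(p))=(\F_p^\times)^3$), reducing to potentially good supersingular reduction acquired over a tamely ramified extension of degree $e\in\{1,2,3,4,6\}$, and exploiting the arithmetic of the image of tame inertia; the crux in both treatments is that $3e\mid p^2-1$ exactly when $p\equiv -1\pmod 9$. The executions of the last step differ. The paper never computes the inertial character explicitly: it shows that the image $I$ of $p$-inertia is cyclic and lies in $C_{ns}(p)$ (every element of $C_{ns}^+(p)\setminus C_{ns}(p)$ has order dividing $2(p-1)$, while a generator of $I$ has order at least $\frac{p^2-1}{6}$), that $\frac{p^2-1}{|I|}$ is odd and divides $e$, hence equals $3$; then, for $p\equiv-1\pmod 9$, the divisibility $e\mid\frac{p^2-1}{3}$ forces $K\subseteq\Q_p^{nr}(E[p])$ by uniqueness of subextensions of the cyclic extension $K(E[p])/\Q_p^{nr}$, whence $|I|=p^2-1$, a contradiction. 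Your route through fundamental characters is viable, but the criterion ``$3\mid 1-(p^2-1)/e$'' overstates what the compatibility $\theta_2^{\Q_p}|_{I_L}=(\theta_2^L)^e$ and the determinant constraint actually give: they determine the exponent $m$ of a generator of $I$ (written as $\theta_2(t)^m$) only modulo $\frac{p^2-1}{e}$, namely $m\equiv 1$, not $m=1-\frac{p^2-1}{e}$ exactly. This is harmless for the theorem, because for $p\equiv-1\pmod 9$ one has $3\mid\frac{p^2-1}{e}$ for every admissible $e$, so $m\equiv 1\pmod 3$ and the generator can never be a cube; but your finer assertions (e.g.\ that $e=3$ succeeds exactly for $p\equiv 2\pmod 9$ and $e=6$ exactly for $p\equiv 5\pmod 9$) would require pinning down $m$ exactly, which you have not done, even though they are consistent with Zywina's CM examples. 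Finally, the paper rules out multiplicative reduction at $p$ by quoting $v_p(j(E))\ge 0$ from Le Fourn--Lemos rather than by your invariant-line argument; both work.
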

	
\begin{proof}
    Since $\det \circ \rho_{E,p}$ surjects onto $\F_p^\times$ and $\det(G(p)) = (\F_p^\times)^3$, we have $(\F_p^\times)^3 = \F_p^\times$ and hence $p \equiv 2 \pmod 3$. Since the smallest prime $p$ congruent to $-1$ modulo 9 is 17, and since we already noticed that $p$ must be equal to 2 modulo 3, it suffices to consider primes $p \ge 17$.
	Choosing a suitable basis of $E[p]$, we can suppose $\operatorname{Im} \rho_{E,p} = G(p)$. We note that $\Gal\left(\faktor{\overline{\Q}_p}{\Q_p}\right)$ can be identified with a $p$-decomposition group of $\Gal\left(\faktor{\overline{\Q}}{\Q}\right)$, and $I_p:=\Gal\left(\faktor{\overline{\Q}_p}{\Q_p^{nr}}\right)$ can be identified with the $p$-inertia, where $\Q_p^{nr}$ is the maximal unramified extension of $\Q_p$. We also let $\Q_p^{tame}$ be the maximal tamely ramified extension of $\Q_p^{nr}$. We have:	
	\begin{align*}
		\operatorname{Im} \rho_{E,p} &= \rho_{E,p} \left(\Gal\left(\faktor{\overline{\Q}}{\Q}\right)\right) > \rho_{E,p} \left(\Gal\left(\faktor{\overline{\Q}_p}{\Q_p}\right)\right) > \rho_{E,p} \left(\Gal\left(\faktor{\overline{\Q}_p}{\Q_p^{nr}}\right)\right) \\
		&= \rho_{E,p}(I_p) =: I.
	\end{align*}
	 Notice that the restriction of $\rho_{E,p}$ to $\Gal\left(\faktor{\overline{\Q}_p}{\Q_p^{tame}}\right)$ is trivial, because its image is a $p$-group contained in $G(p)$, which has $\frac{2(p^2-1)}{3}$ elements. Hence $\rho_{E,p}$ factors through the quotient, inducing a map from $\Gal\left(\faktor{\Q_p^{tame}}{\Q_p^{nr}}\right)$ which we still denote by $\rho_{E,p}$. We have that
	\begin{align*}
		I &= \rho_{E,p}(I_p) \cong \rho_{E,p} \left(\Gal\left(\faktor{\Q_p^{tame}}{\Q_p^{nr}}\right)\right) \cong \Gal\left(\faktor{\Q_p^{nr}(E[p])}{\Q_p^{nr}}\right).
	\end{align*}	

    As shown in \cite[Appendix B]{lefournlemos21}, $v_p(j(E)) \ge 0$ for $p>5$ (i.e., for every $p$ such that $p-1 \nmid 2(p+1)$), hence $E$ has potentially good reduction at $p$. Let $\faktor{K}{\Q_p^{nr}}$ be the minimal extension of $\Q_p^{nr}$ over which $E \times_{\operatorname{Spec} \Q} \operatorname{Spec} \Q_p^{nr}$ acquires good reduction. Consider the subgroup $I_K < I_p$ given by $I_K := \Gal\left(\faktor{\overline{\Q}_p}{K}\right)$.
	By \cite[Section 5.6]{serre72} we know that $e:=[K:\Q_p^{nr}] \in \{1,2,3,4,6\}$. By \cite[Section 1, Propositions 10 and 11]{serre72} we know that either $\rho_{E,p}(I_K)$ is cyclic of order $\frac{p^2-1}{e}$, or the image of $\rho_{E,p}(I_K)$ in $\operatorname{PGL}_2(\F_p)$ contains an element of order $\frac{p-1}{\gcd(e, p-1)}$.
	In the latter case, since the square of any element of $C_{ns}^+(p) \setminus C_{ns}(p)$ is a scalar matrix and hence has order $2$ in $\operatorname{PGL}_2(\F_p)$, every element in $C_{ns}^+(p)$ has order dividing $p+1$ in $\operatorname{PGL}_2(\F_p)$. We then have that $\frac{p-1}{(e, p-1)} \mid p+1$, and so $p-1 \mid 2e \le 12$. However, this is impossible for $p \ge 17$.
	This 
    shows that $\left|\Gal\left(\faktor{K(E[p])}{K}\right)\right| = [K(E[p]):K]= \frac{p^2-1}{e}$.
	\begin{center}
		\begin{tikzpicture}[auto]
			\node (Qpnr) at (0,0) {$\Q_p^{nr}$};
			\node (K) at (1.5,1.5) {$K$};
			\node (KEp) at (0,3) {$K(E[p])$};
			\node (QpnrEp) at (-1.5,1.5) {$\Q_p^{nr}(E[p])$};
			
			\draw[-] (Qpnr) to node [swap] {$e$} (K);
			\draw[-] (Qpnr) to node {$|I|$} (QpnrEp);
			\draw[-] (K) to node [swap]	{$\frac{p^2-1}{e}$} (KEp);
			\draw[-] (QpnrEp) to node {} (KEp);
		\end{tikzpicture}
	\end{center}
	Since $\Gal\left(\faktor{\Q_p^{tame}}{\Q_p^{nr}}\right)$ is a procyclic group, $\Gal\left(\faktor{K(E[p])}{\Q_p^{nr}}\right)$ is cyclic (of order $\frac{p^2-1}{e} \cdot e = p^2-1$). This implies that $I$ is contained in $C_{ns}(p)$, as every element of $C_{ns}^+(p) \setminus C_{ns}(p)$ has order dividing $2(p-1)$, but a generator of $I$ has order at least $\frac{p^2-1}{6}$ and $\frac{p^2-1}{6} > 2(p-1)$ for $p>11$. As noticed in \cite[Appendix B]{lefournlemos21}, $\frac{p^2-1}{|I|}$ is necessarily odd, for otherwise $I$ would be contained in the subgroup of squares of $C_{ns}(p)$, contradicting the fact that $\det \circ \rho_{E,p}|_{I_p}$ surjects onto $\F_p^\times$. Moreover, $\frac{p^2-1}{|I|}$ divides $e$, hence it is either 1 or 3. However, if we had $|I|=p^2-1$, then the whole $I=C_{ns}(p)$ would be contained in $G(p)$, which yields a contradiction. Hence we must have $|I|=\frac{p^2-1}{3}$ and $e \in \{3,6\}$.
	Given that $p \equiv 2 \pmod3$, we have $e \mid \frac{p^2-1}{3} \ \Longleftrightarrow \ 3e \mid p^2-1 \ \Longleftrightarrow \ p \equiv -1 \pmod9$. In particular, whenever $p \equiv -1 \pmod9$, we have that $e$ divides $|I|$, and so $\Q_p^{nr}(E[p])$ has a subextension of degree $e$. Since $\faktor{K(E[p])}{\Q_p^{nr}}$ is cyclic, it has a unique subextension of degree $e$, hence $K \subset \Q_p^{nr}(E[p])$, and so $\Q_p^{nr}(E[p]) = K(E[p])$. This implies that $|I|= p^2-1$, and so $\operatorname{Im} \rho_{E,p}$ cannot be contained in $G(p)$.
\end{proof}

\begin{remark}\label{rmk: local arguments cannot be extended further}
    Theorem \ref{-1mod9} provides the best congruence condition on $p$ that can be obtained by local arguments at $p$. Indeed, as shown in \cite[Proposition 1.16 (iv)]{zywina15}, if $p \equiv 2,5 \pmod 9$, the CM elliptic curve $E: \ y^2=x^3+16p^k$, with $k \equiv - \frac{p+1}{3} \pmod 3$, is such that $\operatorname{Im}\rho_{E,p}$ is conjugate to $G(p)$. However, there exist elliptic curves $E'$ without CM whose defining equations are arbitrarily close to that of $E$ in the $p$-adic metric. By continuity of the local $p$-adic representation with respect to the coefficients of a defining equation (Krasner's lemma), taking $E'$ sufficiently close to $E$ gives examples of non-CM elliptic curves for which the image of $\rho_{E',p}$, restricted to the decomposition group at $p$, is contained in $G(p)$.
\end{remark}

We now consider the action of the $\ell$-inertia for $\ell \ne p$, which allows us to prove the following lemma.
\begin{lemma}\label{lem:jisalmostacube}
    Let $\faktor{E}{\Q}$ be an elliptic curve without complex multiplication. If $p \geq 19$ is a prime number such that $\operatorname{Im} \rho_{E,p}$ is conjugate to $G(p)$, then $j(E)=p^d \cdot c^3$, with $d,c \in \Z$ and $d \ge 0$.
\end{lemma}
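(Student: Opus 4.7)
The plan is to combine the integrality $j(E) \in \Z$ (Theorem \ref{lefournlemos}) with an analysis of the image of $\ell$-inertia for each prime $\ell \neq p$, exploiting the fact that $G(p) \cap \operatorname{SL}_2(\F_p)$ contains no element of order $3$ under our standing hypothesis $p \not\equiv -1 \pmod 9$ (Theorem \ref{-1mod9}).

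Since $j(E) \in \Z$, every $v_\ell(j(E))$ is non-negative, so $E$ has potentially good reduction at every prime. At $\ell = p$ we set $d := v_p(j(E)) \geq 0$ (non-negativity at $p$ is contained in the proof of Theorem \ref{-1mod9}, following Appendix B of \cite{lefournlemos21}); it then remains to prove $3 \mid v_\ell(j(E))$ for every $\ell \neq p$. Via $j = c_4^3/\Delta_{\min}$ this is equivalent to $3 \mid v_\ell(\Delta_{\min})$, and the standard relation between the minimal discriminant and the minimal extension $L/\Q_\ell^{nr}$ over which $E$ acquires good reduction yields $e^* := [L : \Q_\ell^{nr}] = 12/\gcd(12, v_\ell(\Delta_{\min}))$, so $3 \nmid e^*$ is equivalent to the target $3 \mid v_\ell(\Delta_{\min})$. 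Moreover, $p > 37 > |\Aut(E_s)|$ ensures that $\Aut(E_s)$ acts faithfully on $E_s[p]$, and so the image of $I_\ell$ under $\rho_{E,p}$, which factors through $\Gal(L/\Q_\ell^{nr}) \hookrightarrow \Aut(E_s)$, has order exactly $e^*$. Since $\det \rho_{E,p}$ is unramified at every $\ell \neq p$, this image lies in $G(p) \cap \operatorname{SL}_2(\F_p)$, so it suffices to show that this intersection has no element of order $3$.

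Identifying $C_{ns}(p)$ with $\F_{p^2}^\times$ (so that the determinant becomes the norm $N$), the subgroup $G(p) \cap C_{ns}(p) \cap \operatorname{SL}_2(\F_p)$ is the cube subgroup of $\ker N \cong \mu_{p+1}$, namely $\mu_{(p+1)/3}$ (using $3 \mid p+1$, since $p \equiv 2 \pmod 3$); an order-$3$ element here requires $9 \mid p+1$, contradicting $p \not\equiv -1 \pmod 9$. On the non-Cartan side, every $y \in G(p) \cap \operatorname{SL}_2(\F_p) \setminus C_{ns}(p)$ has trace zero (as all of $C_{ns}^+(p) \setminus C_{ns}(p)$ does), so by Cayley--Hamilton $y^2 = -\det(y) I = -I$ and $y$ has order exactly $4$. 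Hence $G(p) \cap \operatorname{SL}_2(\F_p)$ contains no element of order $3$, which forces $3 \nmid e^*$ and therefore $3 \mid v_\ell(j(E))$ for every $\ell \neq p$. Setting $c := \prod_{\ell \neq p} \ell^{v_\ell(j(E))/3} \in \Z$ (the integrality of $c$ being immediate from $v_\ell(j(E)) \geq 0$) then yields the desired factorisation $j(E) = p^d \cdot c^3$. The main delicacy of the argument is establishing $|\rho_{E,p}(I_\ell)| = e^* = 12/\gcd(12, v_\ell(\Delta_{\min}))$ uniformly in $\ell$; at $\ell = 2, 3$ this requires a careful Weierstrass-model analysis that goes slightly beyond the standard Kodaira-type table for $\ell \geq 5$, but the faithfulness of $\Aut(E_s) \hookrightarrow \Aut(E_s[p])$ for $p > 37$ makes the conclusion uniform.
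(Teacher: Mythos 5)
Your strategy is essentially the one the paper uses for Lemma \ref{lem:jisalmostacube}: reduce to showing $3 \mid v_\ell(j(E))$ for each $\ell \neq p$ by looking at the image of $\ell$-inertia, which is isomorphic to $\Gal\left(\faktor{L}{\Q_\ell^{nr}}\right)$ of order $e^*$ and embeds into $G(p)$, and then rule out $3 \mid e^*$. Your explicit verification that $G(p) \cap \operatorname{SL}_2(\F_p)$ contains no element of order $3$ is correct, but it is more work than needed: the paper simply notes that $|G(p)| = \tfrac{2(p^2-1)}{3}$ is coprime to $3$ exactly when $9 \nmid p+1$, which is guaranteed by Theorem \ref{-1mod9}, so the entire inertia image (not only its $\operatorname{SL}_2$ part) has order prime to $3$ and there is no need to invoke unramifiedness of the determinant.

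The genuine gap is the passage from $3 \nmid e^*$ to $3 \mid v_\ell(\Delta_{\min})$. The formula $e^* = 12/\gcd(12, v_\ell(\Delta_{\min}))$ you invoke is false at $\ell = 2$ and $\ell = 3$: the semistability defect can equal $8$ at $\ell = 2$ and $12$ at $\ell = 3$, values that $12/\gcd(12,v)$ never takes, and even when $e^* \in \{1,2,3,4,6\}$ the relation between $e^*$ and the valuation of the minimal discriminant at these primes is not the naive one. You flag that $\ell = 2, 3$ "requires a careful Weierstrass-model analysis" but do not carry it out, and this analysis is not routine: it is precisely the content of Kraus's results \cite{kraus90} (Proposition 1 together with Theorems 1, 2, 3), which the paper cites to deduce that $3 \nmid [L:\Q_\ell^{nr}]$ forces $3 \mid v_\ell(\Delta)$ at every prime, including $2$ and $3$. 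With that citation inserted your argument closes; as written, the cases $\ell \in \{2,3\}$ remain unproven.
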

\begin{proof}
    By Lemma \ref{lemma:theREALintejer} we know that $j(E) \in \Z$. Let $\ell \ne p$ be a prime that divides $j(E)$, let $\Q_\ell^{nr}$ be the maximal unramified extension of $\Q_\ell$, and let $\faktor{K}{\Q_\ell^{nr}}$ be the minimal extension over which $E_\ell = E \times_{\operatorname{Spec}\Q} \operatorname{Spec}\Q_\ell^{nr}$ acquires good reduction. Let $y^2=x^3+ax+b$ be a minimal model for $E$ over $\Q$ with discriminant $\Delta$. By the N\'eron-Ogg-Shafarevich criterion, we know that $K$ is the minimal extension of $\Q_\ell^{nr}$ such that $\rho_{E_\ell,p}\left(\Gal\left(\faktor{\overline{\Q}_\ell}{K}\right)\right)$ is trivial, hence $\Gal\left(\faktor{\overline{\Q}_\ell}{K}\right) = \ker \rho_{E_\ell,p}$ and $\operatorname{Im}\rho_{E_\ell,p} \cong \Gal\left(\faktor{K}{\Q_\ell^{nr}}\right)$. In particular, $\Gal\left(\faktor{K}{\Q_\ell^{nr}}\right)$ embeds in $G(p)$. Since by Theorem \ref{-1mod9} we know that $3 \nmid |G(p)|$, this implies that $3 \nmid [K:\Q_\ell^{nr}]$. However, by \cite[Proposition 1 and Theorems 1, 2, 3]{kraus90} we know that if $3 \nmid [K:\Q_\ell^{nr}]$, then $3 \mid v_\ell(\Delta)$, and hence $v_\ell(j(E)) = v_p\left(-12^3 \cdot \frac{(4a)^3}{\Delta}\right) = 3v_\ell(12) + 3v_\ell(4a) - v_\ell(\Delta)$ is divisible by 3.
\end{proof}

\subsection{The canonical subgroup}

This subsection is dedicated to proving the following proposition.
\begin{proposition}\label{canonicalsgr}
	Let $\faktor{E}{\Q}$ be an elliptic curve without complex multiplication. If $p \geq 19$ is a prime number such that $\operatorname{Im} \rho_{E,p}$ is conjugate to $G(p)$, then $p^4 \mid j(E)$.
\end{proposition}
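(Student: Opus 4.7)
The plan is to argue by contradiction: assume $v_p(j(E)) \le 3$ and, via the Lubin--Katz theory of the canonical subgroup, exhibit a Galois-stable line in $E[p]$, contradicting the explicit shape of the image of the $p$-inertia subgroup established in the proof of Theorem \ref{-1mod9}.

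First, I would recall from that proof that $E$ has potentially good reduction at $p$, acquiring good reduction over an extension $K/\Q_p^{nr}$ of degree $e \in \{3,6\}$, and that the image $I = \rho_{E,p}(I_p)$ of the $p$-inertia subgroup is a cyclic subgroup of $C_{ns}(p)$ of order $(p^2-1)/3$. Moreover, Theorem \ref{-1mod9} gives $p \equiv 2 \pmod 3$, so the reduction of $E$ over the residue field of $K$ has $j$-invariant $0$ (as soon as $v_p(j(E))>0$) and is therefore supersingular. The intuition is that the bound on $v_p(j(E))$ controls how supersingular the reduction is in Katz's sense.

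Next, I would invoke the Lubin--Katz theorem \cite{lubin79, katz73}: whenever a lift of the Hasse invariant has $p$-adic valuation strictly less than $p/(p+1)$, the formal group of $E$ admits a canonical subgroup $C \subset E[p]$ of order $p$. Canonicality, together with the fact that $E$ is defined over $\Q_p^{nr}$, forces $C$ to descend to $\Q_p^{nr}$, yielding an $I$-stable line in $E[p] \cong \F_p^2$. The main technical obstacle is to translate the Hasse-invariant hypothesis into an explicit bound on $v_p(j(E))$. My plan for this step is to use the congruence $E_{p-1} \equiv \mathrm{Ha} \pmod p$ between the Eisenstein series of weight $p-1$ and a lift of the Hasse invariant, together with the standard formulas $v_p(j(E)) = 3v_p(c_4) - v_p(\Delta)$ and $e = 12/\gcd(12, v_p(\Delta_{\min}))$ in the potentially-good case, and to show that the Katz threshold is satisfied precisely when $v_p(j(E)) \le 3$ and $e \in \{3,6\}$.

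Finally, the contradiction: if $C$ exists, then $I$ stabilises a line in $\F_p^2$. But any non-scalar element of $C_{ns}(p)$ has eigenvalues in $\F_{p^2} \setminus \F_p$ and fixes no such line, while the scalar matrices in $C_{ns}(p)$ form a subgroup of order $p-1 < (p^2-1)/3 = |I|$ for every $p \ge 3$. Hence $I$ necessarily contains a non-scalar element, contradicting the existence of the stable line $C$. We conclude $v_p(j(E)) \ge 4$, and combining with $j(E) \in \Z$ (Theorem \ref{intejer}) yields $p^4 \mid j(E)$.
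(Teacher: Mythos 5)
Your overall strategy is the same as the paper's: show that a non-split Cartan image of inertia is incompatible with a canonical subgroup of order $p$ (the paper's Theorem \ref{cartancanonical}), then convert the resulting lower bound on the valuation of the Hasse invariant into a lower bound on $v_p(j(E))$ via the Lubin--Katz criterion (Theorem \ref{canonicalandhasse}). The contradiction you draw from the Galois-stable line is fine. The gap is in the step you leave as a plan, namely the claim that the Katz threshold is met ``precisely when $v_p(j(E)) \le 3$''. For a good-reduction model $y^2=x^3+ax+b$ over $K$ one has $v_p(\Delta)=0$, $v_p(b)=0$, $v_p(j(E))=3v_p(a)$, and (expanding $(x^3+ax+b)^{(p-1)/2}$ and using $p\equiv 2 \pmod 3$, so that every monomial contributing to the coefficient of $x^{p-1}$ carries $a^j$ with $j\ge 1$) $v_p(A)=v_p(a)=v_p(j(E))/3$. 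Since the threshold is $v_p(A)<p/(p+1)<1$ and $v_p(A)\in\frac{1}{e}\Z$ with $e\le 6<p$, the canonical subgroup exists if and only if $v_p(A)<1$, i.e.\ if and only if $v_p(j(E))\le 2$ --- not $\le 3$. Your contradiction therefore only yields $v_p(j(E))\ge 3$, and the case $v_p(j(E))=3$ is untouched.

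To close this gap one needs the separate local input that $3\nmid v_p(j(E))$ (the paper's Lemma \ref{p|j}): since the proof of Theorem \ref{-1mod9} forces $3\mid e=[K:\Q_p^{nr}]$, Kraus's classification shows that the valuation of the minimal discriminant over $\Q_p$ is not divisible by $3$, whence $v_p(j(E))=3v_p(a)-v_p(\Delta)$ is not divisible by $3$ either; this excludes $v_p(j(E))=3$ and upgrades the bound to $v_p(j(E))\ge 4$. This divisibility constraint genuinely cannot be extracted from the canonical-subgroup argument alone --- the paper even remarks that without it one only obtains $v_p(j(E))=0$ or $v_p(j(E))\ge 3$ --- so your proposal as written does not prove the proposition.
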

The proof relies on the theory of the canonical subgroup of $E[p]$, first defined by Lubin and studied by Lubin and Katz (see \cite{lubin79} and \cite{katz73}).

Let $p$ be a prime and let $K$ be a $p$-adic field. Denote by $\mathfrak{p}$ the maximal ideal of $\mathcal{O}_K$. Let $E$ be an elliptic curve defined over $K$ with good reduction at $\mathfrak{p}$. Let $\hat{E}$ be the formal group associated with $E$ and let $E_1(K)$ be the set of the points in $E(K)$ that reduce to the origin $O$ modulo $\mathfrak{p}$.
As explained in \cite[Chapter VII, Proposition 2.2]{aoec}, there is an isomorphism $\hat{E}(\mathfrak{p}) \cong E_1(K)$. In particular, if we consider the extension $L=K(E[p])$, with prime ideal $\mathfrak{P} \mid \mathfrak{p}$, we have $\hat{E}(\mathfrak{P})[p] \cong E_1(L)[p]$. Hence, when $E$ has supersingular reduction modulo $\mathfrak{p}$, there is an isomorphism between the $p$-torsion subgroup of the formal group and the $p$-torsion subgroup of the elliptic curve, i.e., $\hat{E}(\mathfrak{P})[p] \cong E[p]$.
The group $\hat{E}(\mathfrak{P})$ is by definition the set $\mathfrak{P}$ endowed with the group structure coming from the formal group $\hat{E}$. Considering the points $\hat{P}$ of $\hat{E}(\mathfrak{P})$ as elements of $\mathfrak{P}$, we can then refer to the \emph{valuation} of $\hat{P} \in \hat{E}(\mathfrak{P})$: it is simply its valuation as an element of the field $L$.
\begin{definition}
	If there exists $\lambda \in \mathbb{R}$ such that $\{\hat{P} \in \hat{E}(\mathfrak{P})[p] \mid v(\hat{P}) \ge \lambda \}$ is an order-$p$ subgroup of $\hat{E}(\mathfrak{P})[p]$, then this is called the \emph{canonical subgroup of order $p$} of $E$.
\end{definition}
\begin{remark}
	When $E$ has ordinary reduction modulo $\mathfrak{p}$, there always exists a canonical subgroup, given by $\hat{E}(\mathfrak{P})[p] = E_1[p]$, i.e., the kernel of the reduction modulo $\mathfrak{p}$.
\end{remark}
\begin{remark}
	The isomorphism $E_1(L) \cong \hat{E}(\mathfrak{P})$ is given by the map $(x,y) \mapsto -\frac{x}{y}$, hence it is compatible with the action of the Galois group $\Gal\left(\faktor{\overline{K}}{K}\right)$.
\end{remark}
The notion of canonical subgroup can be extended to the case of elliptic curves defined over number fields.
\begin{definition}
	Let $K$ be a number field and let $\mathfrak{p} \mid p$ be a prime of $K$. Let $E$ be an elliptic curve over $K$ with potentially good reduction at $\mathfrak{p}$. Let $L$ be an extension of $K_\mathfrak{p}$ such that $E$ has good reduction over $L$. Given that $E(\overline{K})[p] = E(\overline{K}_\mathfrak{p})[p] = E[p]$, we define the \emph{canonical subgroup of order $p$} of $E$ as the canonical subgroup of order $p$ of $E$ over $L$, if this exists.
\end{definition}
\begin{definition}
	If $E$ is given by the equation $y^2=f(x)$, following \cite{deuring41} we define the Hasse invariant $A$ of $E$ for a prime $p$ as the reduction modulo $p\mathcal{O}_K$ of the coefficient of $x^{p-1}$ in $f(x)^\frac{p-1}{2}$.
\end{definition}
\begin{theorem}\label{canonicalandhasse}
	Let $p \ne 2$ be a prime, let $K$ be a number field and let $\mathfrak{p} \mid p$ be a prime of $K$. Let $\faktor{E}{K}$ be an elliptic curve with potentially good reduction at $\mathfrak{p}$ and let $A$ be its Hasse invariant. The elliptic curve $E$ has a canonical subgroup of order $p$ if and only if $v(A) < \frac{p}{p+1}$.
\end{theorem}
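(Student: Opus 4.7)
The plan is to pass to a finite extension $L/K_{\mathfrak{p}}$ over which $E$ acquires good reduction, so that the $p$-torsion of $\hat{E}$ is precisely the set of roots in $\mathfrak{P}$ of the multiplication-by-$p$ power series $[p](T) \in \mathcal{O}_L[[T]]$. Once this is done, the question of whether a canonical subgroup of order $p$ exists becomes a Newton polygon question for $[p](T)$: we want the $p - 1$ non-trivial $p$-torsion points of largest valuation to form a subgroup, which happens precisely when they all arise from a single segment of the Newton polygon whose slope is strictly smaller than the slope of the remaining segment. In that case the subgroup is singled out by a purely valuative---hence Galois-invariant---condition, and therefore descends back to $K$.

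To carry this out, I will determine three vertices of the Newton polygon of $[p](T) = \sum_{i \ge 1} a_i T^i$. First, the relation $a_1 = p$ pins down the vertex $(1, v(p))$. Second, since $\hat{E}$ is a one-dimensional formal group of height at most $2$, Weierstrass preparation writes $[p](T)$ as a unit times a distinguished polynomial of degree $p^2$ with zero constant term, forcing the vertex $(p^2, 0)$. The critical third input is the classical identification---due to Deuring and systematically worked out by Katz \cite{katz73} and Lubin \cite{lubin79}---of the coefficient of $T^p$ in $[p](T)$ modulo $\mathfrak{p}$ with (a unit multiple of) the Hasse invariant $A$: that is, $[p](T) \equiv \psi(T^p) \pmod{\mathfrak{p}}$, where the linear coefficient of $\psi$ is $A$. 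This gives $v(a_p) = v(A)$, and the same congruence shows that all coefficients $a_i$ with $1 < i < p$ have valuation at least $v(p)$, so that $(p, v(A))$ is the remaining vertex of the lower convex hull as soon as $v(A) < v(p)$.

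Normalising $v(p) = 1$, the two slopes between consecutive vertices are $(v(A) - 1)/(p - 1)$ and $-v(A)/(p(p - 1))$. The first is strictly smaller than the second if and only if $p(1 - v(A)) > v(A)$, i.e. $v(A) < p/(p + 1)$; in this case the first segment contributes exactly $p - 1$ roots of common valuation $(1 - v(A))/(p - 1)$, strictly greater than the valuation $v(A)/(p(p - 1))$ of the remaining non-zero roots, and these $p - 1$ points together with the origin form the canonical subgroup. Conversely, if $v(A) \ge p/(p + 1)$ the two segments coalesce, all non-trivial $p$-torsion points share the same valuation, and no order-$p$ subgroup can be singled out by a valuation condition. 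The main technical obstacle is the identification of the $T^p$-coefficient of $[p](T)$ modulo $\mathfrak{p}$ with the Hasse invariant, a delicate computation in the theory of formal groups in characteristic $p$; once granted (as in \cite{katz73, lubin79}), the result reduces to the elementary Newton polygon analysis above.
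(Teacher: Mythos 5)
Your route is genuinely different from the paper's. The paper disposes of the ordinary case trivially and then reduces the supersingular case to two citations: a theorem of Smith expressing the existence of the canonical subgroup in terms of the valuation of the coefficient $c$ of $x^{(p^2-p)/2}$ in the division polynomial $\psi_p$, and Debry's congruence $c \equiv A \pmod p$. You instead reprove the underlying Lubin--Katz theorem from scratch via the Newton polygon of $[p](T)$ on the formal group, taking as input only the classical congruence $[p](T) \equiv \psi(T^p)$ with $\psi'(0)$ a unit multiple of $A$. This is more self-contained and makes the mechanism transparent, at the cost of having to control the full coefficient list of $[p](T)$ -- and that is where your write-up has a real gap.

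Concretely: you verify that the points $(i, v(a_i))$ with $1 < i < p$ lie on or above height $v(p)$, but you say nothing about the coefficients $a_i$ with $p < i < p^2$. The congruence $[p](T)\equiv\psi(T^p)$ only forces these into the maximal ideal (for $p \nmid i$; for $p \mid i$ it gives nothing beyond the supersingularity of $\psi$), so a priori some $(i_0, v(a_{i_0}))$ with $p < i_0 < p^2$ could lie strictly below the segment joining $(1, v(p))$ to $(p, v(A))$ extended to $(p^2,0)$; this would destroy the claimed vertex at $(p, v(A))$ and with it the forward implication. The repair is an idea you already use elsewhere in the argument but do not apply here: for every $\lambda$ the set $\{x \in \hat{E}(\mathfrak{P})[p] : v(x) \ge \lambda\}$ is a subgroup of $(\Z/p\Z)^2$, so the nonzero roots take at most two distinct valuations with multiplicities $p-1$ and $p^2-p$, and the Newton polygon can only break at abscissa $p$. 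This forces the polygon to be exactly the hull of $(1,v(p))$, $(p, v(a_p))$, $(p^2,0)$ and makes the dichotomy in $v(A) \lessgtr \frac{p}{p+1}$ rigorous in both directions. Two smaller points: the congruence identifying $a_p$ with $A$ must be taken modulo $p$, not modulo $\mathfrak{p}$ -- over the ramified extension $L$ a congruence mod $\mathfrak{p}$ does not give $v(a_p)=v(A)$ in the relevant range $v(A) \in [\tfrac{1}{e}, \tfrac{p}{p+1})$, nor the bound $v(a_i)\ge v(p)$ for $1<i<p$; and the Weierstrass degree of $[p](T)$ is $p$, not $p^2$, when the reduction is ordinary (harmless, since that case is the trivial one, but you should separate it as the paper does).
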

\begin{proof}
	If $E$ has ordinary reduction, it has a canonical subgroup and $v_p(A)=0$, hence from now on we assume that $E$ is supersingular. Let $c$ be the coefficient of $x^{\frac{p^2-p}{2}}$ in the division polynomial $\psi_p(x)$. By \cite[Theorem 4.6]{smith23} we know that $E$ has a canonical subgroup of order $p$ if and only if $v_p(c) < \frac{p}{p+1}$. However, by \cite[Theorem 1]{debry14}, we know that $c \equiv A \pmod p$. In particular, whenever $v_p(A) < 1$ we have that $v_p(c)=v_p(A)$, giving the statement of the theorem. If instead $v_p(A) \ge 1$, then $c \equiv A \equiv 0 \pmod p$, and therefore also $v_p(c) \ge 1$.
\end{proof}
The theory that leads to Theorem \ref{canonicalandhasse} is due to Lubin and Katz, but we have relied on \cite{smith23} because it formulates the results in a way that is closer to what we need.

\begin{theorem}\label{cartancanonical}
	Let $\faktor{E}{\Q}$ be an elliptic curve without complex multiplication and let $p \geq 19$ be a prime. If $\operatorname{Im}\rho_{E,p}$ is contained in the normaliser of a non-split Cartan subgroup, then $E$ does not have a canonical subgroup of order $p$.
\end{theorem}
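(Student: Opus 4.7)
The plan is to combine the Galois-stability of the canonical subgroup with the analysis of the image of $p$-inertia already carried out in the proof of Theorem \ref{-1mod9}. Suppose, for contradiction, that $E$ admits a canonical subgroup $C$ of order $p$. My first step would be to show that $C$ is stable under the full decomposition group $G_{\Q_p} = \Gal(\overline{\Q_p}/\Q_p)$: the canonical subgroup is characterised intrinsically by the valuation condition $v(\hat P) \geq \lambda$ on its formal-group points, and both the canonical valuation on $\overline{\Q_p}$ and the formal-group coordinate $t = -x/y$ coming from any Weierstrass model of $E$ over $\Q_p$ are $G_{\Q_p}$-equivariant; hence every $\sigma \in G_{\Q_p}$ preserves $C$. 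In particular, the image $I := \rho_{E,p}(I_p)$ of the $p$-inertia subgroup is contained in the Borel subgroup $B \subseteq \GL(E[p])$ that stabilises the line $C$.

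Next, I would invoke the analysis already performed in the proof of Theorem \ref{-1mod9}, which uses only the hypothesis $\operatorname{Im}\rho_{E,p} \subseteq C_{ns}^+(p)$ rather than the stricter $\operatorname{Im}\rho_{E,p} \cong G(p)$. That argument shows that, since $p > 5$, $E$ has potentially good reduction at $p$, and that, writing $K/\Q_p^{nr}$ for the minimal extension over which $E$ acquires good reduction and $e := [K:\Q_p^{nr}] \in \{1,2,3,4,6\}$, the group $I$ is cyclic with $(p^2-1)/|I|$ dividing $e$; in particular $|I| \geq (p^2-1)/6$. Since $p > 11$, every element of $C_{ns}^+(p) \setminus C_{ns}(p)$ has order dividing $2(p-1) < (p^2-1)/6$, so the cyclic group $I$ must in fact lie inside $C_{ns}(p)$.

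To conclude, I would apply a simple linear-algebra observation: a non-scalar element of $C_{ns}(p)$ has eigenvalues in $\F_{p^2} \setminus \F_p$, and therefore fixes no line of $\F_p^2$. Consequently, $B \cap C_{ns}(p)$ consists only of scalar matrices and has order $p-1$, so the inclusions $I \subseteq B$ and $I \subseteq C_{ns}(p)$ force $|I| \leq p-1$, contradicting $|I| \geq (p^2-1)/6 > p-1$ (valid for every $p > 5$). The most delicate point in this plan is rigorously justifying the $G_{\Q_p}$-stability of the canonical subgroup, since it is constructed via a formal group that naturally lives over an extension $L/\Q_p$ where $E$ acquires good reduction, rather than over $\Q_p$ itself; however, the intrinsic valuation-theoretic characterisation underlying Theorem \ref{canonicalandhasse} should make this essentially automatic, and the remainder is a brief re-reading of the inertia analysis from the proof of Theorem \ref{-1mod9}.
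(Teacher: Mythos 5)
Your proposal is correct and follows essentially the same route as the paper: potentially good reduction at $p$, the large cyclic inertia image extracted from the proof of Theorem \ref{-1mod9}, and Galois-stability of the canonical subgroup forcing that image to stabilise a line. The only (harmless) differences are that the paper works with stability under $\Gal\left(\faktor{\overline{K}}{K}\right)$ alone --- which sidesteps the descent issue you flag, since one then concludes directly that an element of order at least $\frac{p^2-1}{6}$ is upper triangular while upper-triangular matrices have order dividing $p(p-1)$ --- whereas you upgrade to stability under the full decomposition group and reach the contradiction via the observation that $B \cap C_{ns}(p)$ consists of scalars.
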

\begin{proof}
	As shown in \cite[Appendix B]{lefournlemos21}, $E$ has potentially good reduction at $p$. Let $\faktor{K}{\Q_p^{nr}}$ be the minimal extension over which $E \times_{\operatorname{Spec} \Q} \operatorname{Spec}\Q_p^{nr}$ acquires good reduction. As shown in the proof of Theorem \ref{-1mod9}, if we consider $E$ as a curve over $K$, the group $I:= \rho_{E,p}\left(\Gal\left(\faktor{\overline{K}}{K}\right)\right) < \operatorname{Im}\rho_{E,p}$ is cyclic and contains elements of order $\frac{p^2-1}{12}$.
	Let us suppose that $E$ has a canonical subgroup of order $p$, which is then stable under the action of $\Gal\left(\faktor{\overline{K}}{K}\right)$. With respect to a suitable basis of $E[p]$, all the elements of $I$ are then upper triangular, and in particular, their order divides $p(p-1)$. However, this is impossible since $\frac{p^2-1}{12} \nmid p(p-1)$ for $p > 11$.
\end{proof}
We are now almost ready to prove Proposition \ref{canonicalsgr}. The only missing ingredient is a $p$-adic property of $j(E)$ that we establish in the next lemma.
\begin{lemma}\label{p|j}
	Under the assumptions of Proposition \ref{canonicalsgr}, we have $3 \nmid v_p(j(E))$. In particular, by Lemma \ref{lemma:theREALintejer} we have $v_p(j(E)) > 0$.
\end{lemma}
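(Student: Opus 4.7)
The plan is to convert the local ramification information at $p$ obtained in the proof of Theorem \ref{-1mod9} into an arithmetic statement about $v_p(j(E))$. Recall from that argument that $E$ has potentially good reduction at $p$ and that the minimal extension $K/\Q_p^{nr}$ over which $E$ acquires good reduction satisfies $e := [K:\Q_p^{nr}] \in \{3,6\}$; this conclusion was derived purely from the hypothesis $\operatorname{Im} \rho_{E,p} \subseteq G(p)$ together with the procyclicity of the tame inertia quotient, so it is available under the assumptions of Proposition \ref{canonicalsgr}.

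For a prime $p>3$ and an elliptic curve with additive potential good reduction, the Tate algorithm (equivalently, Kraus's formulas from \cite{kraus90}, the same ingredient already used in Lemma \ref{lem:jisalmostacube}) provides the dictionary $e = 12/\gcd(12, v_p(\Delta_{\min}))$, where $\Delta_{\min}$ denotes the discriminant of a minimal Weierstrass model of $E$ over $\Z_p^{nr}$. Hence $e=3$ forces $v_p(\Delta_{\min}) \in \{4,8\}$ (Kodaira types $\mathrm{IV}$ and $\mathrm{IV}^\ast$), while $e=6$ forces $v_p(\Delta_{\min}) \in \{2,10\}$ (types $\mathrm{II}$ and $\mathrm{II}^\ast$). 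In each of the four cases $v_p(\Delta_{\min}) \not\equiv 0 \pmod 3$.

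Writing $j(E) = c_4^3/\Delta_{\min}$ with the usual invariants, we obtain $v_p(j(E)) = 3 v_p(c_4) - v_p(\Delta_{\min}) \equiv -v_p(\Delta_{\min}) \not\equiv 0 \pmod 3$, which is the claim $3 \nmid v_p(j(E))$. Combined with $j(E) \in \Z$ from Theorem \ref{intejer}, which only ensures $v_p(j(E)) \geq 0$, the non-divisibility by $3$ rules out the value $0$ and yields $v_p(j(E)) \geq 1$. The argument presents no real obstacle: it is essentially an unpacking of the local data already established in Theorem \ref{-1mod9}, through the standard bridge between the ramification index of the field of good reduction and the valuation of the minimal discriminant.
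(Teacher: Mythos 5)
Your proof is correct and follows essentially the same route as the paper: both arguments extract $3 \mid e$ from the proof of Theorem \ref{-1mod9}, translate this via Kraus's results (equivalently, the relation $e = 12/\gcd(12, v_p(\Delta_{\min}))$) into $3 \nmid v_p(\Delta_{\min})$, and conclude from $v_p(j) = 3v_p(c_4) - v_p(\Delta_{\min})$. Your explicit enumeration of the Kodaira types $\mathrm{II}, \mathrm{IV}, \mathrm{IV}^\ast, \mathrm{II}^\ast$ is just a more concrete phrasing of the paper's appeal to \cite[Proposition 1]{kraus90}.
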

\begin{proof}
	Let $y^2=x^3+ax+b$ be a minimal model for $E$ over $\Q$ with discriminant $\Delta$ and let $\faktor{K}{\Q_p^{nr}}$ be the minimal extension over which $E \times_{\operatorname{Spec} \Q} \operatorname{Spec}\Q_p^{nr}$ acquires good reduction. As shown in the proof of Theorem \ref{-1mod9}, we have $3 \mid [K:\Q_p^{nr}]$, hence by \cite[Proposition 1]{kraus90} the denominator of $\frac{v_p(\Delta)}{12}$ is divisible by $3$, and so $3 \nmid v_p(\Delta)$. Hence $v_p(j(E)) = v_p\left(-12^3 \cdot \frac{(4a)^3}{\Delta}\right) = 3v_p(a) - v_p(\Delta)$ is not divisible by $3$.
\end{proof}
\begin{proof}[Proof of Proposition \ref{canonicalsgr}]
	Let $\faktor{K}{\Q_p^{nr}}$ be the minimal extension over which $E \times_{\operatorname{Spec} \Q} \operatorname{Spec}\Q_p^{nr}$ acquires good reduction, let $y^2=x^3+ax+b$ be a model of good reduction for $E$ over $\OK$ and let $A$ and $\Delta$ be the Hasse invariant and the discriminant of this model respectively. By Theorem \ref{cartancanonical} and Theorem \ref{canonicalandhasse} we know that $v_p(A) \ge \frac{p}{p+1}$. As the ramification index of $K$ over $\Q_p$ is $e \le 6$, we have $v_p(A) \in \frac{1}{e}\Z$, and therefore $v_p(A) \ge 1$ since $p>5$. The good reduction of $E$ implies that $v_p(\Delta)=0$, and by Lemma \ref{p|j} we have $0 < v_p(j(E)) = 3v_p(a) - v_p(\Delta) = 3v_p(a)$. Using that $\Delta = -16(4a^3+27b^2)$, $v_p(a) > 0$ and $v_p(\Delta) = 0$, we also have that $v_p(b)=0$.
	We now compute the Hasse invariant. We have
	$$(x^3+ax+b)^\frac{p-1}{2} = \sum_{i+j+k=\frac{p-1}{2}} \binom{(p-1)/2}{i,j} x^{3i} \cdot a^jx^j \cdot b^k,$$
	hence in particular
	\begin{align*}
		A &= \sum_{\substack{i+j+k=\frac{p-1}{2} \\ 3i+j=p-1}} \binom{(p-1)/2}{i,j} a^j b^k = \sum_{2j+3k=\frac{p-1}{2}} \binom{(p-1)/2}{j,k} a^j b^k.
	\end{align*}
	Since by Theorem \ref{zywina} we have $p \equiv 2 \pmod 3$, the minimum value of $j$ among all the indices in the last sum is $1$, hence it is not difficult to show that $v_p(a) = v_p(A) \ge 1$. This implies $v_p(j(E)) = 3v_p(a) \ge 3$. However, by Lemma \ref{p|j}, we know that $3 \nmid v_p(j(E))$, and so $v_p(j(E)) \ge 4$.
\end{proof}
For the proof of Proposition \ref{canonicalsgr}, the fact that $\operatorname{Im}\rho_{E,p} \subseteq G(p)$ is only needed in the proof of Lemma \ref{p|j} and to assume that $p \equiv 2 \pmod 3$, so we can repeat the whole argument without this assumption and obtain the following.
\begin{corollary}
	If $\faktor{E}{\Q}$ is an elliptic curve without complex multiplication and $p \geq 19$ is a prime such that $\operatorname{Im}\rho_{E,p}$ is contained in the normaliser of a non-split Cartan subgroup, then either $v_p(j(E))=0$ or $v_p(j(E)) \ge 3$. Moreover, in the latter case we always have $p \equiv 2 \pmod 3$.
\end{corollary}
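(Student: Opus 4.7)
The key observation is that the proof of Proposition \ref{canonicalsgr} naturally splits into two parts: an analytic portion (using the non-existence of a canonical subgroup and an explicit computation of the Hasse invariant) that outputs $v_p(j(E)) \ge 3$ as soon as $v_p(j(E)) > 0$, followed by an appeal to Lemma \ref{p|j} that rules out the exact value $v_p(j(E)) = 3$ in order to upgrade the bound to $4$. Since the corollary only asks for the weaker conclusion $v_p(j(E)) \ge 3$, the plan is to rerun the first half of the argument without relying on Lemma \ref{p|j}. The only new subtlety is that, once the $G(p)$ hypothesis is dropped, Theorem \ref{-1mod9} no longer applies, so we cannot presuppose $p \equiv 2 \pmod 3$ in the Hasse-invariant analysis and must treat the case $p \equiv 1 \pmod 3$ separately.

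The plan, then, is as follows. Assume $v_p(j(E)) > 0$ (otherwise there is nothing to prove) and reproduce verbatim the setup of Proposition \ref{canonicalsgr}: let $K/\Q_p^{nr}$ be the minimal extension over which $E$ acquires good reduction (of ramification index $e \le 6$), take a good-reduction model $y^2 = x^3 + ax + b$ over $\OK$, and deduce from Theorems \ref{cartancanonical} and \ref{canonicalandhasse} that the Hasse invariant $A$ satisfies $v_p(A) \ge 1$ (using $p > 5$ and $v_p(A) \in \tfrac{1}{e}\Z$). Since $v_p(j(E)) = 3 v_p(a) > 0$ and $v_p(\Delta) = 0$, the identity $\Delta = -16(4a^3 + 27b^2)$ again forces $v_p(b) = 0$.

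It then remains to upgrade the bound on $v_p(A)$ to a bound on $v_p(a)$ via the explicit expansion
\[
A = \sum_{2j+3k = (p-1)/2} \binom{(p-1)/2}{(p-1)/2-j-k,\,j,\,k}\, a^j b^k,
\]
which I split according to $p \bmod 3$. If $p \equiv 2 \pmod 3$, the congruence $2j+3k = (p-1)/2$ forces $j \ge 1$; the $j=1$ term has multinomial coefficient a $p$-adic unit (each factorial involved is $<p$), it is the unique summand of minimal valuation, so $v_p(A) = v_p(a)$ and hence $v_p(j(E)) = 3v_p(a) \ge 3$, as in the original proof. If $p \equiv 1 \pmod 3$, writing $p = 6n+1$, the congruence forces $j \equiv 0 \pmod 3$, so the admissible $j$'s are $0,3,6,\dots$; the $j=0$ term is $\binom{3n}{n}\, b^n$, a $p$-adic unit times $b^n$, so it has valuation $0$, while every other term has valuation $\ge 3v_p(a) > 0$. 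Thus $v_p(A) = 0$, contradicting $v_p(A) \ge 1$, so $v_p(j(E)) > 0$ is impossible in this case.

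I do not foresee a genuine obstacle: the argument is essentially a rerun of Proposition \ref{canonicalsgr}, and the only real care needed is in verifying, for $p \equiv 1 \pmod 3$, that the distinguished $j = 0$ term survives as a $p$-adic unit (no collisions among $(j,k)$ pairs can cause accidental cancellation, and the binomial coefficient $\binom{3n}{n}$ is a unit because all factorials involved are $<p$). Combining the two cases yields the stated dichotomy $v_p(j(E)) \in \{0\} \cup \Z_{\ge 3}$.
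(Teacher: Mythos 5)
Your proposal is correct and follows the paper's route exactly: the paper's entire justification for this corollary is the one-sentence remark that the $G(p)$ hypothesis enters the proof of Proposition \ref{canonicalsgr} only through Lemma \ref{p|j}, so one reruns the canonical-subgroup/Hasse-invariant argument without it and keeps only the conclusion $v_p(j(E))\in\{0\}\cup\Z_{\ge 3}$. In fact you are more careful than the paper: the congruence $p\equiv 2\pmod 3$ used in the Hasse-invariant computation also descends from the $G(p)$ hypothesis (via Theorem \ref{-1mod9}), a point the paper's remark glosses over, and your separate treatment of $p\equiv 1\pmod 3$ --- where the $j=0$ term $\binom{3n}{n}b^n$ forces $v_p(A)=0$ and hence $v_p(j(E))=0$ --- correctly closes that small gap while still yielding the stated dichotomy.
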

\begin{proof}
    If $p \equiv 2 \pmod 3$ the proof is exactly the same as that of Proposition \ref{canonicalsgr}, hence it suffices to show that if $p \equiv 1 \pmod 3$ then $v_p(j(E))=0$. Following the argument in the proof of Proposition \ref{canonicalsgr}, assume by contradiction that $p \equiv 1 \pmod 3$ and $v_p(j(E)) > 0$. Over a suitable extension $K$ of $\mathbb{Q}_p^{nr}$, we can write $E$ as $y^2 = x^3 + ax + b$ with $v_p(\Delta)=v_p(b)=0$ and $3v_p(a) = v_p(j(E)) > 0$. We can then write the Hasse invariant as $A = c \cdot b^{\frac{p-1}{6}} + a \cdot d(a,b)$ for some constants $c, d(a,b) \in \mathcal{O}_K$ with $v_p(c)=0$. It follows that $v_p(A)=0$, which gives a contradiction with Theorems \ref{canonicalandhasse} and \ref{cartancanonical}.
\end{proof}

\section{An effective surjectivity theorem}\label{sec:isogeny}
	
	In this section, we give a new version of the effective surjectivity theorem of Le Fourn \cite[Theorem 5.2]{lefourn16}, obtaining a bound on the size of the largest prime $p$ for which the image of the representation $\rho_{E,p}$ is contained in the normaliser of a non-split Cartan subgroup of $\GL(E[p])$. The main difference from \cite[Theorem 5.2]{lefourn16} is that our version also applies non-trivially to curves of small height.
	\begin{theorem}[Effective surjectivity]\label{effiso}
		Let $E$ be an elliptic curve without CM defined over the number field $K$. We denote by $\Fheight(E)$ the stable Faltings height of $E$ with the normalisation of \cite[Section 1.2]{deligne85}. Let $p$ be a prime such that $\rho_{E,p} \left(\Gal\left(\faktor{\overline{K}}{K}\right)\right) \subseteq C_{ns}^+(p)$ up to conjugacy.
		\begin{enumerate}
			\item If $\Fheight(E)>1$, we have
			$$p < 2908 [K:\Q] \left( \Fheight(E) + 2\log p + \frac{3}{2}\log (\Fheight(E))  + 4.725 \right).$$
			\item If $K=\Q$ and $\tau$ is the point in the standard fundamental domain $\mathcal{F}$ of $\uhp$ such that $E(\C) \cong \faktor{\C}{\Z \oplus \tau\Z}$, then
			$$p < 2908 \left( \Fheight(E) + 2\log p + \frac{3}{2}\max\{0, \log(\Im\{\tau\})\} + 2 \right).$$
			Furthermore, if we assume that $q := e^{2\pi i \tau}$ satisfies $|\log|q|| \ge 30$, then
			$$p < 2530 \left( \Fheight(E) + 2\log p + \frac{3}{2}\log(\Im\{\tau\}) + 1.94 \right).$$
		\end{enumerate}
	\end{theorem}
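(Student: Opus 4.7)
The plan is to adapt the effective isogeny theorem of Gaudron--R\'emond \cite{gaudron-remond} to our geometric setting, refining every step of the strategy that underlies Le Fourn's \cite[Theorem 5.2]{lefourn16}. First I would translate the group-theoretic hypothesis into geometric input for the transcendence machine. If $\operatorname{Im}\rho_{E,p} \subseteq C_{ns}^+(p)$ up to conjugacy, then after at most a quadratic extension $L/K$ the image lands in $C_{ns}(p) \cong \F_{p^2}^\times$, so that $E[p]$ acquires the structure of a one-dimensional $\F_{p^2}$-vector space stable under $\Gal(\overline{K}/L)$, with the non-trivial element of $\Gal(L/K)$ exchanging it with the corresponding structure on the Galois conjugate $E^\sigma[p]$. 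Packaging $E$ and $E^\sigma$ together -- equivalently, forming the Weil restriction $A = \operatorname{Res}_{L/K} E_L$, an abelian surface over $K$ -- and combining them with this $\F_{p^2}$-line yields a $K$-rational subgroup scheme of order $p$ on $A$ that plays the role of an isogeny of degree $p$ in the transcendence argument.

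Next I would feed this datum into the slope-theoretic apparatus of Gaudron--R\'emond. On a suitable power $A^m$ one constructs an auxiliary section of a metrised line bundle whose arithmetic degree is calibrated to the $p$-torsion input; Bost's slope inequality (or Minkowski's theorem in its geometry-of-numbers form) produces a non-zero section of small norm, which, when evaluated on the orbit of a carefully chosen torsion translate and combined with a zero estimate of Masser--W\"ustholz or Philippon type, forces a proper algebraic subgroup $B \subsetneq A^m$ to exist. A volume comparison then yields a bound on $p$ that is linear in $\Fheight(A)$, with explicit constants depending on $\dim A$, $[K:\Q]$, and period-length corrections. The key innovation is Lemma \ref{B[sigma]=0}: in Gaudron--R\'emond's general argument an auxiliary subvariety $B[\sigma]$ appears and obstructs the optimisation, but in our non-CM non-split-Cartan setting one checks that $B[\sigma] = 0$, tightening the final constant to $2908$. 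The Silverman--Deligne shift of $\tfrac{1}{2}\log\pi$ in the normalisation of the Faltings height accounts for the $\log(\Fheight(E) + \tfrac{1}{2}\log\pi)$ correction, whose positivity is exactly ensured by the hypothesis $\Fheight(E) > 1 - \tfrac{1}{2}\log\pi$.

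For the $K = \Q$ refinements I would replace the generic period estimates by the sharp quantities available when $\tau \in \mathcal{F}$: the covolume of the period lattice is $\Im\{\tau\}$, giving rise to the $\tfrac{3}{2}\log\Im\{\tau\}$ term, with the $\max\{0,\cdot\}$ truncation accommodating the regime of small $\Im\{\tau\}$. Under the additional assumption $|\log|q|| \ge 30$ (equivalently $\Im\{\tau\} \ge 30/(2\pi)$), Lemma \ref{logsum} and elementary estimates render the defect $-\sum_{n \ge 1}\log|1-q^n|$ negligible, which tightens the leading constant from $2908$ to $2530$ and the additive constant from $4.725$ to $1.94$. The principal obstacle will be the precise bookkeeping of numerical constants through the transcendence chain (Siegel lemma, zero estimate, arithmetic intersection theory), and especially the proof of Lemma \ref{B[sigma]=0}: one must combine the non-CM hypothesis (forcing $\End(E) = \Z$ and restricting the endomorphism rings of related abelian varieties) with the precise $\F_{p^2}$-structure coming from the non-split Cartan condition to show that the obstructing subvariety is genuinely trivial.
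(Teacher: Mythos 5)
Your plan matches the paper's proof essentially step for step: pass to a quadratic extension over which the image lies in $C_{ns}(p)$, form an abelian surface from $E\times E$ and the $\F_{p^2}$-structure on $E[p]$, run the Gaudron--R\'emond period/slope machinery with a polarisation calibrated to $\Im\{\tau\}$, and gain the improved constants from Lemma \ref{B[sigma]=0}, which the paper proves exactly as you anticipate (by the non-CM hypothesis a one-dimensional abelian subvariety of $A$ pulls back to a curve $\{aP=bQ\}\subset E\times E$, whose intersection with the graph of $\gamma\in C_{ns}(p)$ is trivial because $a-b\gamma$ is invertible in the non-split Cartan). The one slip is that the Galois-stable subgroup underlying the construction is the graph $\{(x,\gamma x):x\in E[p]\}$, an $\F_{p^2}$-line of order $p^2$ --- there is no stable subgroup of order $p$ in the non-split Cartan situation --- so the auxiliary isogeny $E\times E\to A$ has degree $p^2$, which is precisely the source of the $2\log p$ term in the final bounds.
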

	
	We follow closely the approach of \cite[Theorem 5.2]{lefourn16} and \cite[Theorem 1.4]{gaudron-remond}, but we are able to obtain much-improved constants by noticing that certain auxiliary subvarieties considered in \cite{gaudron-remond} are in fact all trivial (see Lemma \ref{B[sigma]=0}). Unlike \cite{lefourn16}, for simplicity we only consider a single prime $p$ for which $\operatorname{Im}\rho_{E,p} \subseteq C_{ns}^+(p)$, instead of bounding the product of the primes for which the representation is not surjective. We refer the reader to \cite[Theorem 5.1]{furio2025effectiveboundsadelicgalois} for a more general version of this result, where several prime powers are taken into account.
	
	We begin by recalling some crucial definitions from \cite{gaudron-remond}.
	\begin{definition}\label{def: x}
		Let $A$ be a complex abelian variety, let $B \subset A$ be an abelian subvariety of codimension $t \ge 1$, and let $L$ be a polarisation on $A$. We define
		\begin{align*}
			x(B) := \left(\frac{\deg_LB}{\deg_LA}\right)^\frac{1}{t} \qquad \text{and} \qquad x := \min_{B \subsetneq A} x(B),
		\end{align*}
		where $\deg_LA$ is the top self-intersection number of the line bundle $L$ on $A$.
	\end{definition}
	Let $(A,L)$ be a polarised abelian variety defined over a number field $K$. Fix an embedding $\sigma : K \hookrightarrow \C$ and let $(A_\sigma,L_\sigma)$ be the base-change of $(A,L)$ to $\C$ via $\sigma$. We will denote by $B[\sigma]$ a proper abelian subvariety of $A_\sigma$ such that $x(B[\sigma])=x$.
	\begin{definition}
		Let $A$ be a complex abelian variety and let $L$ be a polarisation on $A$. Let $\| \cdot \|_L$ be the norm induced by $L$ on the tangent space $t_A$, and let $\Omega_A$ be the period lattice. We define
		$$\rho(A,L) := \min\{\|\omega\|_L \mid \omega \in \Omega_A \setminus \{0\}\}.$$
	\end{definition}
	\begin{remark}\label{rmk: princ pol on ell curves}
		Let $E$ be an elliptic curve defined over a number field $K$ and let $L$ be its canonical principal polarisation. As explained in \cite[Remark 3.3]{gaudron-remond}, given an embedding $\sigma : K \hookrightarrow \C$ we have $\rho(E_\sigma,L_\sigma)^{-2} = \Im\{\tau_\sigma\}$, where $\tau_\sigma$ is the element in the standard fundamental domain $\mathcal{F}$ that corresponds to $E_\sigma$ and $L_\sigma$ is the base-change of the polarisation $L$ via $\sigma$.
	\end{remark}
	\begin{definition}\label{def: delta sigma}
		Let $A$ be an abelian variety defined over a number field $K$ and let $\sigma: K \hookrightarrow \C$ be an embedding. Let $L$ be a polarisation on $A$ and let $\operatorname{d}_\sigma$ be the distance induced by $L_\sigma$ on $t_{A_\sigma}$. We define
		$$\delta_\sigma = \min\{\operatorname{d}_\sigma(\omega,t_{B[\sigma]}) \mid \omega \in \Omega_{A_\sigma} \setminus t_{B[\sigma]} \},$$
		where $B[\sigma]$ is as in Definition \ref{def: x}.
	\end{definition}

    We now begin the proof of Theorem \ref{effiso}. With notation as in that statement, let $K'$ be a quadratic extension of $K$ such that
    	\begin{equation}\label{eq:kappaprimo}
    	    \rho_{E,p}\left(\Gal\left(\faktor{\overline{K}}{K'}\right)\right) \subseteq C_{ns}(p).
    	\end{equation}
    Note that, if $\rho_{E,p}\left(\Gal\left(\faktor{\overline{K}}{K}\right)\right)$ is already contained in $C_{ns}(p)$, we choose $K'$ to be an arbitrary quadratic extension of $K$.

	
Let $\gamma$ be an element of $C_{ns}(p)$ which is not a multiple of the identity. By taking the quotient $A$ of $E \times E$ by the subgroup $\{(x,\gamma \cdot x) \bigm\vert x \in E[p]\}$, we have an isogeny $\varphi: E\times E \to A$ defined over $K'$ such that $\deg \varphi = p^2$. There exists $\psi: A \to E \times E$ such that $\psi \circ \varphi = [p]_{E \times E}$, so $\deg\psi=p^2$. As explained in the proof of \cite[Proposition 5.1]{lefourn16}, for every embedding $\sigma : K' \hookrightarrow \C$, there is a canonical norm $\| \cdot \|_\sigma$ on the tangent space of $E_\sigma$, which contains the period lattice $\Omega_{E, \sigma}$. As in \cite[Part 7.3]{gaudron-remond} and in the proof of \cite[Proposition 5.1]{lefourn16}, we choose an embedding $\sigma_0$ such that there exists a basis $(\omega_0,\tau_{\sigma_0}\omega_0)$ of $\Omega_{E,\sigma_0}$ for which $\tau_{\sigma_0}$ is as in Remark \ref{rmk: princ pol on ell curves} and
$$\| \omega_0 \|_{\sigma_0} = \max_\sigma \min_{\omega \in \Omega_{E,\sigma} \setminus \{0\}} \| \omega \|_\sigma.$$
By Remark \ref{rmk: princ pol on ell curves}, this choice of $\sigma_0$ minimizes $\Im\{\tau_{\sigma}\}$ among all $\sigma$, as in \cite[Part 7.3]{gaudron-remond}. Let $\Omega_{A,\sigma_0}$ be the period lattice of $A_{\sigma_0}$ and $\chi \in \Omega_{A,\sigma_0}$ be such that $\mathrm{d}\psi(\chi) = (\omega_0,\tau_{\sigma_0}\omega_0)$ (such a $\chi$ exists, as shown in the proof of \cite[Theorem 5.2]{lefourn16}). 

\begin{definition}
    Setting $\omega = (\omega_0, \tau_{\sigma_0}\omega_0, \chi) \in \Omega_{E \times E \times A , \sigma_0}$, we define $A_\omega$ as the minimal abelian subvariety of $(E \times E \times A)_{\sigma_0}$ containing $\omega = (\omega_0, \tau_{\sigma_0}\omega_0, \chi)$ in its tangent space.
\end{definition}
As in the proof of \cite[Proposition 5.1]{lefourn16}, one shows that $$A_\omega := \{(\psi(z),z) \mid z \in A_{\sigma_0}\} \subset (E \times E \times A)_{\sigma_0}.$$
We then see that the complex abelian variety $A_\omega$ can be defined over $K'$. When we consider $A_{\omega}$ as being defined over $K'$, we will write $(A_\omega)_\sigma$ for its base-change to $\mathbb{C}$ along a given embedding $\sigma: K' \hookrightarrow \mathbb{C}$.  The abelian variety $A_\omega$ falls within the context of \cite[Part 7.3]{gaudron-remond}. We follow \cite{gaudron-remond}, taking into account some additional features of our specific situation throughout the proof to obtain better numerical constants.
	
We choose a polarisation on $A_\omega$ as in \cite[Part 7.3]{gaudron-remond}, namely, in the following way. Set $n = \lfloor |\tau_{\sigma_0}|^2 \rfloor$, let $L_E$ be the canonical principal polarisation on $E$ and let $\pi_1,\pi_2$ be the projections from $E \times E$ on the two copies of $E$. We consider the polarisation $L'=\pi_1^*L_E^{\otimes n} \otimes \pi_2^*L_E$ on $E \times E$ and the isogeny $f$ defined as the composition $A_\omega \xrightarrow{\sim} A_{\sigma_0} \xrightarrow{\psi} (E \times E)_{\sigma_0}$, where the first isomorphism is given by the projection $(\psi(z),z) \mapsto z$. We define the polarisation $L := f^*L'$ on $A_\omega$, and as in \cite[Part 7.3]{gaudron-remond} we compute $$\deg_L A_\omega = (\deg f)\deg_{L'} E^2 = 2np^2.$$
	
\begin{lemma}\label{slopes}
    Let $\hat{\mu}_{max}(\overline{t_{A_\omega}^\vee})$ be the quantity defined in \cite[Part 6.8]{gaudron-remond}. The inequality $$\hat{\mu}_{max}(\overline{t_{A_\omega}^\vee}) \le \Fheight(E) + 2\log p + \frac{1}{2}\log \frac{n}{\pi}$$ holds.
\end{lemma}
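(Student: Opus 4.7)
The plan is to compare $\hat{\mu}_{max}(\overline{t_{A_\omega}^\vee})$ with the analogous quantity for $(E \times E)_{\sigma_0}$ via the isogeny $f$ of degree $p^2$, after first computing the slope on the product directly.

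For the product: the splitting $t_{E \times E} = \pi_1^* t_E \oplus \pi_2^* t_E$ is orthogonal with respect to the Hermitian form attached to $L' = \pi_1^*L_E^{\otimes n} \otimes \pi_2^*L_E$, and the form on the first summand is $n$ times the one induced by $L_E$. Dually, the metric on $\pi_1^* t_E^\vee$ is $1/\sqrt{n}$ times the one coming from $L_E$, which raises the arithmetic slope by $\tfrac{1}{2}\log n$. Using the standard identification of $\hat{\mu}(\overline{t_E^\vee}_{L_E})$ with $\Fheight(E)$ (with the Deligne normalisation), the two orthogonal sub-bundles $\pi_j^* \overline{t_E^\vee}$ of $\overline{t_{E \times E}^\vee}$ have slopes $\Fheight(E) + \tfrac{1}{2}\log n$ and $\Fheight(E)$ respectively. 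Since $E$ has no complex multiplication, $\operatorname{End}(E \times E) = M_2(\Z)$ and the abelian sub-schemes of $E \times E$ are completely classified (the two factors and graphs of multiplication maps); a direct check of the slope on each item of this list shows that the maximum is attained on $\pi_1^*\overline{t_E^\vee}$, giving
\[
\hat{\mu}_{max}\bigl(\overline{t_{(E \times E)_{\sigma_0}}^\vee},\, L'\bigr) = \Fheight(E) + \tfrac{1}{2}\log n.
\]

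Since $L = f^*L'$, the archimedean Hermitian structure on $t_{A_\omega}^\vee$ is exactly the pullback under $df$ of the one on $t_{E \times E}^\vee$, so the comorphism $df^\vee$ defines an injection of Hermitian vector bundles $f^* \overline{t_{(E \times E)_{\sigma_0}}^\vee} \hookrightarrow \overline{t_{A_\omega}^\vee}$ on $\operatorname{Spec} \mathcal{O}_K$, whose cokernel is annihilated by $\deg f = p^2$ (as a subquotient of the differential of the finite kernel of $f$). The slope inequality for morphisms of Hermitian bundles with torsion cokernel then yields
\[
\hat{\mu}_{max}(\overline{t_{A_\omega}^\vee}) \le \hat{\mu}_{max}\bigl(f^*\overline{t_{(E \times E)_{\sigma_0}}^\vee}\bigr) + 2\log p = \Fheight(E) + 2\log p + \tfrac{1}{2}\log n,
\]
as claimed. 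The main obstacle is quantifying the contribution of $df^\vee$ at the finite places by exactly $2\log p$: this requires the standard but delicate comparison of the Néron models of $A_\omega$ and $E \times E$, which is the technical heart of the analogous calculations in \cite[Part 7.3]{gaudron-remond}, whose slope inequality I would invoke directly in this step.
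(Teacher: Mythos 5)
Your argument takes a genuinely different route from the paper, which disposes of the lemma in two lines by invoking the proof of Gaudron--R\'emond's Lemma 7.6 together with the bound $\Fheight(A_\omega) \le 2\Fheight(E) + \log p$ from the end of the proof of Le Fourn's Proposition 5.1 and the normalisation shift $\Fheight(E)=h(E)-\frac{1}{2}\log\pi$; you instead compute the maximal slope of $\overline{t_{(E\times E)_{\sigma_0}}^\vee}$ for $L'$ directly and transfer it to $A_\omega$ through $df^\vee$. The skeleton is workable, but two of your justifications do not hold up as written. First, $\hat{\mu}_{max}$ is a supremum over all saturated submodules of the cotangent bundle, and these have nothing to do with abelian subschemes of $E\times E$, so the classification of the latter is beside the point; what actually closes this step is that $\overline{t_{E\times E}^\vee}$ is an \emph{orthogonal} direct sum of two Hermitian line bundles of slopes $\Fheight(E)+\frac{1}{2}\log n$ and $\Fheight(E)$, and for an orthogonal sum of line bundles the maximal slope is the larger of the two slopes.

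Second, the kernel of $f$ is killed by $p$, not by $p^2$ (since $\varphi\circ\psi=[p]_A$), and in any case the slope inequality needs a bound on the \emph{order} of $\operatorname{coker}(df^\vee)$, not on its annihilator: from ``annihilated by $p^2$'' alone, a rank-two submodule could a priori pick up $4\log p$. The correct input is that this cokernel has order dividing $\deg f=p^2$, which does yield the $2\log p$. Finally, since the stated inequality carries no additive constant, the identification $\hat{\mu}(\overline{t_E^\vee})=\Fheight(E)$ (polarisation metric versus the $L^2$ metric in Deligne's normalisation) must be verified on the nose against Gaudron--R\'emond's conventions --- this is precisely why the paper inserts the correction $\frac{1}{2}\log\pi$ --- whereas you assert it without checking.
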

\begin{proof}
    It suffices to combine the proof of \cite[Lemma 7.6]{gaudron-remond} with the remark at the end of the proof of \cite[Proposition 5.1]{lefourn16}, which gives $\Fheight(A_\omega) \le 2\Fheight(E) + \log p$. 
\end{proof}

The following definition collects the notations that will be needed in the rest of the proof.
\begin{definition}\label{def: notation for the proof of the effective surjectivity theorem}
    Following \cite[Parts 6.2 and 6.3]{gaudron-remond}, we set
    $\varepsilon = \frac{3\sqrt{2}-4}{2}$, $\theta = \frac{\log2}{\pi}$ and $S_\sigma := \left\lfloor \frac{\theta\varepsilon}{x\delta_\sigma^2} \right\rfloor$, where $x$ is as in Definition \ref{def: x} and $\delta_\sigma$ is as in Definition \ref{def: delta sigma}. We further define $\mathcal{V} = \{\sigma:K' \hookrightarrow \C \mid S_\sigma \ge 1\}$.
    For every $\sigma: K' \hookrightarrow \C$ choose $B[\sigma] \subset (A_\omega)_\sigma$ as in Definition \ref{def: x}. We introduce the following quantities.
    \begin{align*}
		\aleph_1 := & \ 2\max\{0,\hat{\mu}_{max}(\overline{t_{A_\omega}^\vee})\} + 5\log 2 + \frac{2}{[K':\Q]} \sum_{\sigma \in \mathcal{V}} \log \max \left\lbrace 1, \frac{1}{\rho((A_\omega)_\sigma,L_\sigma)} \right\rbrace \\
		& + \frac{4}{[K':\Q]} \sum_{\sigma \in \mathcal{V}} \log\deg_{L_\sigma}B[\sigma] + \varepsilon\log12; \\
        m := & \ \frac{1}{[K':\Q]} \sum_{\sigma \in \mathcal{V}} \frac{1}{\delta_\sigma^2}.
    \end{align*}
\end{definition}
	
	By \cite[Part 6]{gaudron-remond},	and in particular \cite[Part 6.8]{gaudron-remond}, we have
 $$\varepsilon\log2 \left( \frac{\varepsilon\theta}{x}m -1
	\right) < \frac{\varepsilon\log2}{[K':\Q]}\sum_{\sigma \in \mathcal{V}} S_\sigma \le \aleph_1 + \frac{\pi x}{2}\left(\frac{3}{2} + \frac{3\theta\varepsilon}{x}\sqrt{m} + \left(\frac{\theta\varepsilon}{x} \right)^2 m\right),$$
	where the inequality on the left is obtained by the definition of $S_\sigma$ together with the inequality $\lfloor x \rfloor > x-1$, while the inequality on the right is that of \cite[Part 6.8, equation (14)]{gaudron-remond} together with the estimate on $\aleph_2$ obtained by the Cauchy-Schwarz inequality on the same page of \cite{gaudron-remond}.
	Solving the inequality in $\sqrt{m}$ we obtain
	\begin{equation}\label{sqrtm-ineq}
		\sqrt{m} < \frac{3\pi x}{2\varepsilon\log2}\left(1 + \sqrt{1+ \frac{8}{9\pi x}\left(\aleph_1 + \frac{3\pi}{4}x + \varepsilon \log2\right)}\right).
	\end{equation}
	
	We now distinguish cases according to whether $p \le \sqrt{2n}$ or $p>\sqrt{2n}$.
	\begin{lemma}
	    If $p \le \sqrt{2n}$, then Theorem \ref{effiso} holds for $E$ and $p$.
	\end{lemma}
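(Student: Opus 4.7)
The plan is to show that the inequality $p \leq \sqrt{2n}$ is already so strong -- much stronger than what Theorem \ref{effiso} requires -- that, after translating $n = \lfloor |\tau_{\sigma_0}|^2\rfloor$ into a bound involving $\Im\{\tau_{\sigma_0}\}$ and then into one involving $\Fheight(E)$, the desired inequality follows with an enormous margin.

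First I would observe that, since $\tau_{\sigma_0}$ belongs to the standard fundamental domain, the constraints $|\Re\{\tau_{\sigma_0}\}| \leq 1/2$ and $\Im\{\tau_{\sigma_0}\} \geq \sqrt{3}/2$ give
$$|\tau_{\sigma_0}|^2 = \Re\{\tau_{\sigma_0}\}^2 + \Im\{\tau_{\sigma_0}\}^2 \leq \tfrac{1}{4} + \Im\{\tau_{\sigma_0}\}^2 \leq \tfrac{4}{3}\Im\{\tau_{\sigma_0}\}^2,$$
so that the hypothesis of the lemma yields $p \leq \sqrt{2n} \leq \sqrt{2}\,|\tau_{\sigma_0}| \leq \tfrac{2\sqrt{2}}{\sqrt{3}}\,\Im\{\tau_{\sigma_0}\}$.

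Next I would bound $\Im\{\tau_{\sigma_0}\}$ in terms of $\Fheight(E)$. When $K = \Q$ this follows at once from inequality \eqref{eq: heights-ineq-3} of Theorem \ref{heights-ineq}: substituting $-\log|q| = 2\pi\Im\{\tau\}$ and using $\Im\{\tau\} \geq \sqrt{3}/2$ to render the correction $2|q|/(1-|q|)$ negligible, one obtains $\Im\{\tau_{\sigma_0}\} \leq \tfrac{6}{\pi}\bigl(\Fheight(E) + \tfrac{1}{2}\log\Im\{\tau_{\sigma_0}\} + C\bigr)$ for a small explicit constant $C$. For a general number field $K$, I would rerun the computation directly in Silverman's formula \eqref{eq: Silverman's formula for the Faltings height}: the integral-discriminant contribution is non-negative, each archimedean summand equals (up to the bounded quantity $24\sum_n \log|1-q_v^n|$, uniformly small on the fundamental domain by Lemma \ref{logsum}) $n_v f(\Im\{\tau_v\})$ with $f(y) := 2\pi y - 6\log y$, and the crucial point is that $f$ attains on $[\sqrt{3}/2, +\infty)$ the positive universal minimum $f(3/\pi) = 6 - 6\log(3/\pi)$. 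Using that $\Im\{\tau_{\sigma_0}\}$ is the minimum among all $\Im\{\tau_v\}$ then allows one to isolate the $\sigma_0$-term from the sum over archimedean places and obtain the analogous bound, with an extra factor of $[K:\Q]$ on the right-hand side.

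Combining the two steps yields an estimate of the form
$$p \leq \tfrac{12\sqrt{2}}{\pi\sqrt{3}}[K:\Q]\bigl(\Fheight(E) + \tfrac{1}{2}\log\Im\{\tau_{\sigma_0}\} + C'\bigr),$$
and since $\tfrac{12\sqrt{2}}{\pi\sqrt{3}} = \tfrac{4\sqrt{6}}{\pi} < 3.13$ is almost three orders of magnitude smaller than the constants $2530$ and $2908$ appearing in Theorem \ref{effiso}, all three claimed inequalities follow with enormous slack after a trivial bootstrap: either one converts $\log\Im\{\tau_{\sigma_0}\}$ into $\log(\Fheight(E) + \tfrac{1}{2}\log\pi)$ via $\Im\{\tau_{\sigma_0}\} \ll [K:\Q]\Fheight(E)$ (case 1), or one simply notes that $\log\Im\{\tau\}$ already appears explicitly in the statement (case 2). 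I do not expect any serious obstacle: the only technical work lies in pinning down numerical constants in the general-$K$ Silverman estimate, and the margin is so wide that even very crude bounds on the auxiliary quantities suffice.
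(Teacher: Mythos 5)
Your proposal is correct and follows the same overall strategy as the paper: convert $p\le\sqrt{2n}$ into $p\ll\Im\{\tau_{\sigma_0}\}$ via $n\le|\tau_{\sigma_0}|^2\le\Im\{\tau_{\sigma_0}\}^2+\tfrac14$, then bound $\Im\{\tau_{\sigma_0}\}$ by the Faltings height and observe that the resulting constant is orders of magnitude below $2530$, $2908$. For $K=\Q$ you and the paper both invoke inequality \eqref{eq: heights-ineq-3}. The one place where you diverge is the general number field: the paper simply uses that $\sigma_0$ minimises $\Im\{\tau_\sigma\}$, so $\Im\{\tau_{\sigma_0}\}$ is at most the average $\frac{1}{[K':\Q]}\sum_\sigma\Im\{\tau_\sigma\}$, and cites Gaudron--R\'emond's Proposition 3.2 to bound that average by $6.45\left(\max\{\Fheight(E),1-\tfrac12\log\pi\}+\tfrac12\log\pi\right)$ with no factor of $[K:\Q]$; you instead reprove a version of this from Silverman's formula \eqref{eq: Silverman's formula for the Faltings height}, isolating the $\sigma_0$-term and discarding the other archimedean places at the cost of a constant per place, which yields an extra factor $[K:\Q]$ (harmless here, since the target bound already carries that factor). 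Your route is self-contained but requires more bookkeeping; the paper's is a one-line citation and gives a cleaner, $[K:\Q]$-free bound on $\Im\{\tau_{\sigma_0}\}$.

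One small imprecision: the positivity of $\min f$ on $[\sqrt3/2,\infty)$ is not actually the relevant point. The full archimedean summand in \eqref{eq: Silverman's formula for the Faltings height} is $n_v\left(f(\Im\{\tau_v\})-12\log(2\pi)+6\log\pi-24\sum_n\log|1-q_v^n|\right)$, and the constant $-12\log(2\pi)+6\log\pi\approx-15.2$ makes each summand negative for small $\Im\{\tau_v\}$ (consistent with $\Fheight$ reaching $\approx-0.749$). What you need, and what you do have, is merely that each summand is bounded below by a universal constant, so that dropping the places $v\ne\sigma_0$ costs only $O([K:\Q])$ in the additive constant. With that adjustment the argument goes through with the enormous margin you describe.
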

	\begin{proof}
		If $p \le \sqrt{2n}$, we can write $$p \le \sqrt{2\lfloor |\tau_{\sigma_0}|^2 \rfloor} \le \sqrt{2|\tau_{\sigma_0}|^2} \le \sqrt{2\left((\Im\{\tau_{\sigma_0}\})^2 + \frac{1}{4}\right)} \le \sqrt{2} \, \Im\{\tau_{\sigma_0}\} + \frac{1}{\sqrt{2}}.$$
	    Remark \ref{rmk: princ pol on ell curves} gives $\Im\{\tau_{\sigma}\} = \rho(E_\sigma, L_\sigma)^{-2}$, so by \cite[Proposition 3.2]{gaudron-remond} we have $$\Im\{\tau_{\sigma_0}\} \le \frac{1}{[K':\Q]}\sum_{\sigma} \Im\{\tau_\sigma\} \le 6.45 \max\left\lbrace\Fheight(E),1\right\rbrace,$$
	    and therefore $$p < 14 \max\left\lbrace\Fheight(E),1\right\rbrace + 15,$$
	    which is largely better than Theorem \ref{effiso} (1) (taking into account Remark \ref{minimalheight}). 
		Concerning part (2), if $K=\Q$, by Theorem \ref{heights-ineq} (3) we have
		\begin{align*}
			|\log|q|| &\le 12\Fheight(E) + 6\log|\log|q|| + 5,
		\end{align*}
		hence
		\begin{align*}
			p &\le \sqrt{2} \, \Im\{\tau\} + \frac{1}{\sqrt{2}} = \frac{1}{\pi\sqrt{2}} |\log|q|| + \frac{1}{\sqrt{2}} \le \frac{6\sqrt{2}}{\pi}\Fheight(E) + 2\log|\log|q|| + 2 \\
			&\le 3\Fheight(E) + \log(\Im\{\tau\}) + 6,
		\end{align*}
		which is again largely better than Theorem \ref{effiso} (2).
	\end{proof}
	\begin{lemma}\label{B[sigma]=0}
		Assume $p> \sqrt{2n}$. Given $A_\omega$ as above, for every $\sigma: K' \hookrightarrow \C$ we have $B[\sigma]=0$, and hence $x=\frac{1}{p\sqrt{2n}}$.
	\end{lemma}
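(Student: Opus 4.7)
The plan is to exploit the fact that $A_\omega$ has dimension two, so that its proper abelian subvarieties are either the trivial subvariety $B = 0$ or an elliptic curve. A direct computation gives $x(0) = \bigl(1/(2np^2)\bigr)^{1/2} = 1/(p\sqrt{2n})$, since the zero subvariety has codimension $2$ in $A_\omega$ and trivial degree. The lemma will therefore follow if I can show $x(C) > x(0)$ for every elliptic subcurve $C \subset (A_\omega)_\sigma$.

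First I would transport $C$ through the canonical isomorphism $A_\omega \xrightarrow{\sim} A$ (projection to the third factor) and then push forward via $\psi$ to obtain an elliptic curve $C_1 := \psi(C) \subset (E \times E)_\sigma$. Because $E$ has no complex multiplication, every elliptic subcurve of $(E \times E)_\sigma$ must be of the form $\{(ae, be) : e \in E_\sigma\}$ for some coprime pair $(a, b) \in \Z^2$. A standard intersection computation on $E \times E$ with $L' = \pi_1^* L_E^{\otimes n} \otimes \pi_2^* L_E$ yields $\deg_{L'} C_1 = na^2 + b^2$, and by the projection formula $\deg_L C = \deg(\psi|_C) \cdot (na^2 + b^2)$.

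The crucial step is to evaluate $\deg(\psi|_C)$. Using the identity $\psi \circ \varphi = [p]_{E \times E}$ we obtain $\deg(\psi|_C) \cdot \deg(\varphi|_{C_1}) = p^2$, so it suffices to show that $\varphi|_{C_1}$ is an isomorphism, i.e.\ $C_1 \cap \ker\varphi = \{0\}$. A point $(ae, be) \in C_1 \cap \ker\varphi$ satisfies $(ae, be) = (x, \gamma x)$ for some $x \in E[p]$, and multiplying the second equation by $a$ and using $x = ae$ rearranges to $(a\gamma - bI)(x) = 0$ in $E[p]$. Since $\gamma \in C_{ns}(p)$ is not a scalar (the alternative having been ruled out in the remark preceding the lemma), the characteristic polynomial of $\gamma$ has non-square discriminant and so $\gamma$ has no eigenvalue in $\F_p$. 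Combined with the fact that $\gcd(a,b) = 1$ forces $(a \bmod p, b \bmod p) \ne (0, 0)$, a short case analysis (treating $a \not\equiv 0$ by writing $a\gamma - bI = a(\gamma - (b/a)I)$, and $a \equiv 0$ by noting $b \not\equiv 0$ and hence $a\gamma - bI \equiv -bI \pmod p$) shows that $a\gamma - bI$ is invertible on $E[p]$ in every case. Therefore $x = 0$, $\deg(\varphi|_{C_1}) = 1$, $\deg(\psi|_C) = p^2$, and consequently $\deg_L C = p^2(na^2 + b^2)$, giving $x(C) = (na^2 + b^2)/(2n) \ge 1/(2n)$.

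The required inequality $x(0) < x(C)$ then becomes $1/(p\sqrt{2n}) < 1/(2n)$, which is equivalent to $p > \sqrt{2n}$ and hence holds by the standing hypothesis. This forces $B[\sigma] = 0$ as the unique minimiser and $x = 1/(p\sqrt{2n})$ for every embedding $\sigma$. I expect the main subtlety to be the case analysis for the invertibility of $a\gamma - bI$ on $E[p]$, which is where the non-split Cartan condition enters decisively; everything else is a routine intersection-theoretic calculation on a product of elliptic curves without CM.
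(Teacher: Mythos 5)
Your proof is correct and follows essentially the same route as the paper: compute $x(0)=1/(p\sqrt{2n})$, classify the elliptic subcurves of $E\times E$ by coprime pairs $(a,b)$ using $\End(E)=\Z$, show the intersection with $\ker\varphi=G$ is trivial because $a\gamma-bI$ is invertible (the paper phrases this via the field structure of the non-split Cartan, you via an explicit case analysis on $a \bmod p$), deduce $\deg_L B\ge p^2$ and hence $x(B)\ge 1/(2n)>x(0)$. The only cosmetic differences are that you push forward along $\psi$ where the paper pulls back along $\varphi$ and takes a connected component, and that you compute $\deg_{L'}C_1=na^2+b^2$ explicitly where the paper only needs $\deg_{L'}C\ge 1$.
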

	\begin{proof}
		Since $A_\omega \cong A$, it is sufficient to prove the statement for $A$ and $L=\psi^*L'$. First of all, we notice that $x(0)=\left(\frac{1}{2np^2}\right)^\frac{1}{2}= \frac{1}{p\sqrt{2n}}$. Let us now consider an arbitrary proper abelian subvariety $B$ such that $\dim B >0$. The subgroups of $A$ correspond to those of $E \times E$ that contain $G=\{(x,\gamma \cdot x) \mid x \in E[p]\}$. Since $B$ is a proper subvariety of the abelian surface $A$, we have $\dim B = 1$. Hence, $\varphi^{-1}(B) \subset E \times E$ is an algebraic subgroup of dimension $1$ containing $G$. In particular, there exists an elliptic curve $C \subset E \times E$ such that the algebraic group $\varphi^{-1}(B)$ is $\tilde{C} := \langle C , G \rangle$, and $C$ is the connected component of $\tilde{C}$ that contains $0$. Since $\ker \varphi = G$, we have $\varphi(C) = \varphi(\tilde{C}) = B$, and $C=[p](C)=\psi \circ \varphi (C)= \psi(B)$.
		By assumption, $E$ does not have CM, hence there exist two relatively prime integers $a,b$ such that $$C=\{(P,Q) \in E \times E \mid aP=bQ\}.$$
		Therefore, we have $\deg\varphi|_C = |\ker\varphi|_C| = |C \cap G|$, with $$C \cap G = \left\lbrace  (x,\gamma \cdot x) \mid x \in E[p] \text{ such that } (a-b\gamma)x=0 \right\rbrace.$$
		However, $a-b\gamma$ is always invertible when $a$ and $b$ are coprime (it is a non-zero element in a non-split Cartan subgroup), hence $C \cap G = \{0\}$, and so $\deg\varphi|_C = 1$. We have $$p^2 = \deg[p]|_C = (\deg\psi|_B)(\deg\varphi|_C) = \deg\psi|_B,$$
		and therefore $\deg_LB = \deg_{\psi^*L'}B = (\deg\psi|_B)\deg_{L'}C \ge p^2$.
		We can now estimate
		$$x(B) = \frac{\deg_LB}{\deg_LA} \ge \frac{p^2}{2np^2} = \frac{1}{2n},$$
		and so $x(B) > x(0)$ since $p > \sqrt{2n}$.
	\end{proof}
	\begin{remark}
		As $B[\sigma]=0$ for every $\sigma$, we have $\rho((A_\omega)_\sigma,L_\sigma) = \delta_\sigma$.
	\end{remark}
	
	We now notice that $\rho((A_\omega)_\sigma,L_\sigma) \ge \rho(E_\sigma,(L_E)_\sigma)$, as explained in \cite[Part 7.3, p.~387]{gaudron-remond}, hence $\log \max \left\lbrace 1, \frac{1}{\rho((A_\omega)_\sigma,L_\sigma)} \right\rbrace \le \log \max \left\lbrace 1, \frac{1}{\rho(E_\sigma,(L_E)_\sigma)} \right\rbrace$.
	If we define
	\begin{align*}
		\overline{\aleph}_1 := 2\Fheight(E) + 4\log p + \log \frac{n}{\pi} + 5\log 2 + \frac{2}{[K':\Q]} \sum_{\sigma \in \mathcal{V}} \log \max \left\lbrace 1, \frac{1}{\rho(E_\sigma,(L_E)_\sigma)} \right\rbrace + \varepsilon\log12,
	\end{align*}
	by Lemma \ref{slopes} and Lemma \ref{B[sigma]=0} we have $\aleph_1 \le \overline{\aleph}_1$ (indeed, $\deg_{L_\sigma} 0 = 1$ for every $\sigma$). We can then replace $\aleph_1$ by $\overline{\aleph}_1$ in equation \eqref{sqrtm-ineq}.
	We now treat separately the two cases in which $K$ is $\Q$ or an arbitrary number field.
	
	\subsection{$K=\Q$}
	
	In this case, the field $K'$ of equation \eqref{eq:kappaprimo} is an imaginary quadratic field and the two embeddings of $K'$ are complex conjugate, hence they give the same norm $\|\cdot\|_{L_\sigma} = \|\cdot\|_{L_{\overline{\sigma}}}$. For this reason, we will omit $\sigma$ in the subscripts. Since $E$ is defined over $\Q$, we also have a single $\tau = \tau_\sigma = \tau_{\bar{\sigma}}$. Consider $A_\omega$ as a complex abelian variety via the embedding $\sigma$ and consider $S=S_\sigma$ (see Definition \ref{def: notation for the proof of the effective surjectivity theorem}), which by construction vanishes if and only if
	$$1 > \frac{\theta\varepsilon}{x\delta^2} = \frac{\theta\varepsilon p\sqrt{2n}}{\delta^2} = \frac{\theta\varepsilon p\sqrt{2n}}{\rho(A_\omega,L)^2}.$$
	As shown in \cite[Part 7.3]{gaudron-remond}, $\rho(A_\omega,L)^2 \le \frac{2n}{\sqrt{n-\frac{1}{4}}}$, hence if $S=0$ we have
	$$1 >\frac{\theta\varepsilon p\sqrt{2n}}{\rho(A_\omega,L)^2} \ge \left( \frac{1}{2} - \frac{1}{8n} \right)^\frac{1}{2} \theta\varepsilon p > \sqrt{\frac{3}{8}} \theta\varepsilon p,$$
	that gives $p < \frac{\sqrt{8}}{\theta\varepsilon\sqrt{3}} < 62$, which is better than part 2 of Theorem \ref{effiso} (since $\Fheight(E)+1.94>1$ by Remark \ref{minimalheight}). Thus we can assume $S \ge 1$, and in particular, we can assume that $\mathcal{V}$ contains both the embeddings of $K'$. Then by Remark \ref{rmk: princ pol on ell curves} we have
	\begin{align*}
		\frac{2}{[K':\Q]} \sum_{\sigma \in \mathcal{V}} \log \max \left\lbrace 1, \frac{1}{\rho(E_\sigma,(L_E)_\sigma)} \right\rbrace &= 2\log\max \left\lbrace 1, \frac{1}{\rho(E,L_E)} \right\rbrace \\
		&= \log\max \left\lbrace 1, \frac{1}{\rho(E,L_E)^2} \right\rbrace \\
		&= \max\left\lbrace 0,\log(\Im\{\tau\})\right\rbrace.
	\end{align*}
	Under the assumption $S \ge 1$, we have obtained
	$$\overline{\aleph}_1 = 2\Fheight(E) + 4\log p + \log \frac{n}{\pi} + 5\log2 + \log \max\left\lbrace 1,\Im\{\tau\}\right\rbrace + \varepsilon\log12.$$
	By Remark \ref{minimalheight}, we have $\Fheight(E) \ge -0.75$ for every rational elliptic curve $E$. Since $p- 2 \cdot 2530\log p < 2530(-0.75 + 1.94)$ holds for all primes $p \le 58000$, we can assume $p>58000$, otherwise Theorem \ref{effiso} would trivially hold. Then we have
	$$\overline{\aleph}_1 > -1.5 + 4\log58000 - \log \pi + 5\log2 + \varepsilon\log12 > 44.$$
	Using that $x=\frac{1}{p\sqrt{2n}} \le \frac{1}{p\sqrt{2}}$ (Lemma \ref{B[sigma]=0}), this gives
	$$\frac{8}{9\pi x}\left(\overline{\aleph}_1 + \frac{3\pi}{4}x + \varepsilon \log2\right) > \frac{8 \sqrt{2} \cdot 58000}{9\pi} \cdot 44 > 10^6.$$
	Since the function $\frac{1+\sqrt{z}}{\sqrt{1+z}}$ is decreasing for $z>1$, in equation \eqref{sqrtm-ineq} we obtain
	\begin{align*}
		\frac{1}{\rho(A_\omega,L)} &= \frac{1}{\delta} = \sqrt{m} < \frac{3\pi x}{2\varepsilon\log2}\left(1 + \sqrt{1+ \frac{8}{9\pi x}\left(\overline{\aleph}_1 + \frac{3\pi}{4}x + \varepsilon \log2\right)}\right) \\
		&\le \frac{3\pi x}{2\varepsilon\log2} \cdot \frac{1+1000}{\sqrt{10^6 + 1}} \sqrt{2+ \frac{8}{9\pi x}\left(\overline{\aleph}_1 + \frac{3\pi}{4}x + \varepsilon \log2\right)}.
	\end{align*}
	Squaring both sides and substituting $x=\frac{1}{\sqrt{2n} \, p}$ (Lemma \ref{B[sigma]=0}) we have
	\begin{align*}
		\frac{\sqrt{2n} \, p}{\rho(A_\omega,L)^2} &< \frac{9\pi^2x}{4\varepsilon^2 (\log2)^2} \cdot 1.002 \cdot \left(2+ \frac{8}{9\pi x}\left(\overline{\aleph}_1 + \frac{3\pi}{4}x + \varepsilon \log2\right)\right) \\
		&= \frac{2\pi}{\varepsilon^2(\log2)^2} \cdot 1.002 \left( \overline{\aleph}_1 + 3\pi x + \varepsilon \log2 \right) \\
		&< \frac{2.004\pi}{\varepsilon^2(\log2)^2} \left( \overline{\aleph}_1 + 0.085 \right),
	\end{align*}
	where in the last inequality we bounded $x$ with $\frac{1}{58000\sqrt{2}}$, since we are assuming that $p>58000$. Using again $\rho(A_\omega,L)^2 \le \frac{2n}{\sqrt{n-\frac{1}{4}}}$ and bounding $\overline{\aleph}_1$ we obtain
	\begin{align}\label{daquidueopzioni}
		p &< \left(2- \frac{1}{2n}\right)^{-\frac{1}{2}} \frac{4.008\pi}{\varepsilon^2(\log2)^2} \left( 2\Fheight(E) + 4\log p + \log \frac{n}{\pi} +  \max \{0,\log\Im\{\tau\}\} + 3.86 \right).
	\end{align}
	We now use $\left(2- \frac{1}{2n}\right)^{-\frac{1}{2}} \le \sqrt{\frac{2}{3}}$ and $\log n \le \log|\tau|^2 \le \log\left(\Im\{\tau\}^2 + \frac{1}{4}\right)$. If $|\tau|^2<2$, we have $\log n = 0$, if instead $|\tau|^2 \ge 2$, then $\Im\{\tau\} \ge \frac{\sqrt{7}}{2}$ and so $\log\left(\Im\{\tau\}^2 + \frac{1}{4}\right) \le \log\left(\frac{8}{7}\Im\{\tau\}^2\right) = 2\log(\Im\{\tau\}) + \log(8/7)$. We can combine the two cases by writing $\log \frac{n}{\pi} \le \max\{0,2\log(\Im\{\tau\})\} + \log(8/7) - \log \pi$. Equation \eqref{daquidueopzioni} then yields
	$$p < 2908 \left( \Fheight(E) + 2\log p + \frac{3}{2}\max\{0, \log(\Im\{\tau\})\} + 2 \right),$$
	which is the unconditional inequality of Theorem \ref{effiso}.
	
	If we further assume that $|\log|q||\ge 30$, we can write $n \geq |\tau|^2-1 \ge \Im\{\tau\}^2-1 = \frac{(\log|q|)^2}{4\pi^2}-1 \ge 21.7$. As before, we have the bound
	$$\frac{8}{9\pi x}\left(\overline{\aleph}_1 + \frac{3\pi}{4}x + \varepsilon \log2\right) > \frac{8 \sqrt{44} \cdot 58000}{9\pi} \cdot 44 > 4.7 \cdot 10^6,$$
	and for $z>4.7 \cdot 10^6$ we have $\left(\frac{1+\sqrt{z}}{\sqrt{z+1}}\right)^2 < 1.00093.$
	We can then estimate $\left(2- \frac{1}{2n}\right)^{-\frac{1}{2}} \le \sqrt{\frac{44}{87}}$ and $\log \frac{n}{\pi} \le 2\log(\Im\{\tau\}) + \log\left(1+ \frac{1}{4\Im\{\tau\}^2}\right) - \log \pi \le 2\log(\Im\{\tau\}) + 0.011 - \log \pi$. Hence, since $\Im\{\tau\} > 1$, equation \eqref{daquidueopzioni} (with $4.00372$ instead of $4.008$) yields
	$$p < 2530 \left( \Fheight(E) + 2\log p + \frac{3}{2}\log(\Im\{\tau\}) + 1.94 \right),$$
	which concludes the proof of Theorem \ref{effiso} (2).
	
	\subsection{$K$ is a generic number field}
	
	By assumption, we have $\Fheight(E) > 1$. Moreover, we can also assume that $[K:\Q]>1$, since if $[K:\Q]=1$ the second part of Theorem \ref{effiso} implies the first. Similarly to the case $K=\Q$, we can assume $p> 170000$, since otherwise the claim is trivial. 
Following \cite[Part 7.3]{gaudron-remond} and using \cite[Proposition 3.2]{gaudron-remond} we have
	$$\frac{2}{[K':\Q]} \sum_{\sigma \in \mathcal{V}} \log \max \left\lbrace 1, \frac{1}{\rho(E_\sigma,(L_E)_\sigma)} \right\rbrace \le \log\left(\Fheight(E) \right) + \log6.45.$$
	We define
	$$\widetilde{\aleph}_1 := 2\Fheight(E) + 4\log p + \log \frac{n}{\pi} + \log\left(\Fheight(E) \right) + 5.6313,$$
	which satisfies $\widetilde{\aleph}_1 > \overline{\aleph}_1 \ge \aleph_1$, so we can again write $\widetilde{\aleph}_1$ instead of $\aleph_1$ in equation \eqref{sqrtm-ineq}. As before, we can bound $\widetilde{\aleph}_1$ from below writing
	$$\widetilde{\aleph}_1 > 2 + 4\log170000 -\log\pi  + 5.6313 > 54.6,$$
	and so 
	$$\frac{8}{9\pi x}\left(\widetilde{\aleph}_1 + \frac{3\pi}{4}x + \varepsilon \log2\right) > \frac{8 \sqrt{2} \cdot 170000}{9\pi} \cdot 54.6 > 3.7 \cdot 10^6.$$
	This gives
	$$\sqrt{m} < \frac{3\pi x}{2\varepsilon\log2} \cdot \frac{1+1000\sqrt{3.7}}{\sqrt{3.7 \cdot 10^6 + 1}} \sqrt{2+ \frac{8}{9\pi x}\left(\widetilde{\aleph}_1 + \frac{3\pi}{4}x + \varepsilon \log2\right)},$$
	and by the same arguments as in the case $K=\Q$ we obtain
	\begin{align*}
		p &< \left(2- \frac{1}{2n}\right)^{-\frac{1}{2}} \frac{[K':\Q]}{2}\frac{4.0042\pi}{\varepsilon^2(\log2)^2} \left( \widetilde{\aleph}_1 + 3\pi x + \varepsilon\log2 \right).
	\end{align*}
	By \cite[Proposition 3.2]{gaudron-remond} we have $$n \le |\tau_{\sigma_0}|^2 \le \Im\{\tau_{\sigma_0}\}^2 + \frac{1}{4} \le 6.45^2 \left(\Fheight(E) + \frac{1}{2} \log\pi \right)^2 + \frac{1}{4},$$
	and so
	$$\log n \le 2\log\left(\Fheight(E) + \frac{1}{2}\log\pi \right) + 3.7342.$$
	Putting everything together we obtain
	$$p < 2905 [K:\Q] \left( \Fheight(E) + 2\log p + \frac{3}{2}\log\left(\Fheight(E) + \frac{1}{2}\log\pi\right) + 4.725 \right),$$
	concluding the proof.

\section{Modular units and an upper bound on $\log|q|$}\label{sec:modular}

The aim of this section is to prove Proposition \ref{logq<30}, which gives the absolute upper bound $\log |j(E)| \leq 39 + \log 2$ for all elliptic curves $\faktor{E}{\Q}$ which satisfy $\operatorname{Im} \rho_{E, p} \cong G(p)$ for some prime $p>5$. This should be contrasted with the estimate $\log |j(E)| \leq 27000$ given in \cite{lefournlemos21}. 

For technical reasons, in the whole section we work with the quantity $\log |q|$ instead of $\log |j(E)|$, where $q=e^{2\pi i \tau}$ and $\tau$ is a point in the upper half plane $\uhp$ corresponding to $E(\C)$. By Theorem \ref{estimate-qj}, whenever $\tau$ is in the standard fundamental domain $\mathcal{F}$, estimates on $\log |j(E)|$ translate into estimates on $\log |q|$ and vice versa.

The improved bound is obtained in two steps.
In Proposition \ref{logq<p}, we obtain a preliminary bound on $|\log|q||$ which is already sharper than \cite[Proposition 6.1]{lefournlemos21} ($O(\sqrt[4]{p})$ instead of $O(\sqrt{p})$, with the key improvement given by Lemma \ref{kloosterman}). 
This allows us to prove that $p<\boundonprimes$: we then use this to re-estimate $\log|q|$ and obtain the final bound $|\log|q||<39$.

\subsection{Modular Units}

As in \cite{lefournlemos21}, we recall certain modular units studied in \cite{modunits}. In particular, we give a summary of \cite[§6]{lefournlemos21}: we define certain modular units for the inverse image of $G(p)$ in $\operatorname{SL}_2(\Z)$ and describe some of their properties.
\begin{definition}
	Let $\tau \in \uhp$. For all $(a_1,a_2) \in \frac{1}{p}\Z^2 \cap [0,1)^2$, with $a_1,a_2$ not both $0$, we define
	$$g_{a_1,a_2}=q^{\frac{B_2(a_1)}{2}}e(a_2(a_1-1)/2) \prod_{n=0}^{\infty}(1-q^{n+a_1}e(a_2))(1-q^{n+1-a_1}e(-a_2)),$$
	where $q^k=e^{2\pi i k\tau}$ and $e(k)=e^{2\pi i k}$ for any $k \in \Q$, and $B_2(x)=x^2-x+\frac{1}{6}$.
\end{definition}

\begin{remark}\label{rmk: wrong root of unity}
    See \cite[Chapter 2, Section 1, Equation K 4 on page 29]{modunits} for this definition of the Siegel function $g_{a_1, a_2}$. The definition of $g_{a_1,a_2}$ in \cite[equation (6.1)]{lefournlemos21} is slightly imprecise, as the term $e(a_2(a_1-1)/2)$ is written as $e(a_2(a_1-1))$. However, this does not affect any of the arguments in \cite{lefournlemos21}.
\end{remark}

\begin{definition}
	We set $\mathcal{O}_{\cubes} := \left\lbrace \begin{pmatrix} a \\ b \end{pmatrix} \in \F_p^2 \setminus \{0\} \ \middle| \ a+b\sqrt{\varepsilon} \in \F_{p^2}^{\times 3}  \right\rbrace$, where $\varepsilon$ is the same as in equation \eqref{eq:cartan} and $\sqrt{\varepsilon}$ is a fixed element in $\mathbb{F}_{p^2}$ with square $\varepsilon$. We define
	$$U(\tau)= \zeta \prod_{(a,b) \in \mathcal{O}_{\cubes}} g_{\frac{a}{p}, \frac{b}{p}}^3,$$
	where $\zeta$ is a root of unity such that the coefficient of the lowest power of $q$ in $U$ is 1.
\end{definition}

\begin{remark}
	As explained in \cite[Lemma 6.3]{lefournlemos21}, the quotient $\faktor{\mathcal{O}_{\cubes}}{\pm 1}$ parametrises the Galois orbit of the cusps $\infty$ of the modular curve $X_{G(p)}$.
\end{remark}

The next two results are both contained in \cite[Proposition 6.4]{lefournlemos21}.
\begin{theorem}[Le Fourn, Lemos]
	We have the following:
	\begin{itemize}
		\item $U^2 \in \Q(X_{G(p)})$.
		\item The zeroes of $U^2$ are the cusps at infinity (that is, the Galois orbit of the cusp $\infty$), while its poles are the other cusps.
		\item Both $U^2$ and $\frac{p^6}{U^2}$ are integral over $\Z[j]$.
	\end{itemize}
\end{theorem}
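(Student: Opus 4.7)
My plan is to verify the three claims separately, drawing on the standard theory of Siegel units (Kubert--Lang, \emph{Modular Units}) together with the specific combinatorics of $\mathcal{O}_{\cubes}$ and $G(p)$.

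For the first claim, I would decompose $U \in \Q(X_{G(p)})$ into (a) $\Gamma$-invariance and (b) rationality of Fourier coefficients. The Siegel units satisfy a transformation formula of the form $g_{a_1,a_2} \circ \gamma = \zeta_\gamma \cdot g_{(a_1,a_2)\gamma}$ for $\gamma \in \operatorname{SL}_2(\Z)$, with $\zeta_\gamma$ a $12p$-th root of unity; the cubing $g_{a,b}^3$ in the definition of $U$, together with the normalising root of unity $\zeta$, absorbs all these roots of unity. Thus (a) reduces to showing $\mathcal{O}_{\cubes} \cdot G(p) = \mathcal{O}_{\cubes}$: under the identification $(a,b) \leftrightarrow a + b\sqrt{\varepsilon} \in \F_{p^2}^\times$, elements of $C_{ns}(p)^3$ act by multiplication by cubes (hence preserve the cube subgroup), while the coset representative $\begin{pmatrix} 1 & 0 \\ 0 & -1 \end{pmatrix}$ acts as the $\operatorname{Gal}(\F_{p^2}/\F_p)$-Frobenius $x \mapsto x^p$, which also preserves cubes. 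For (b), the Shimura reciprocity law identifies the $\operatorname{Gal}(\Q(\zeta_p)/\Q) \cong \F_p^\times$-action on the Fourier coefficients of modular units on $X(p)$ with the $\F_p^\times$-action through the determinant; since $\det G(p) = (\F_p^\times)^3 = \F_p^\times$ (using $p \equiv 2 \pmod 3$), the coefficients are Galois-fixed, hence lie in $\Q$.

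For the second assertion, Siegel units have no zeros or poles on $\uhp$, so $U$ is a modular unit whose divisor is supported on the cusps of $X_{G(p)}$. The cusps of $X(p)$ are parametrised by $(\F_p^2 \setminus \{0\})/\pm 1$ with the natural $\GL_2(\F_p)$-action, so those of $X_{G(p)}$ are indexed by $G(p)$-orbits; as recalled just before the theorem, the Galois orbit of $\infty$ corresponds exactly to $\mathcal{O}_{\cubes}/\pm 1$. The order of $U$ at a given cusp is then computed by transforming it to $\infty$ via an element of $\operatorname{SL}_2(\Z)$ and reading off the leading $q$-exponent: the leading exponent of $g_{a_1,a_2}$ is $B_2(a_1)/2$, so the order of $U$ at the cusp indexed by $v$ equals $\frac{3}{2}\sum_{(a,b) \in \mathcal{O}_{\cubes}} B_2(\pi_1((a,b)\cdot\gamma_v)/p)$ for a suitable $\gamma_v$. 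A direct combinatorial computation, using that $\mathcal{O}_{\cubes}$ is a \emph{strict} subgroup of $\F_{p^2}^\times/\{\pm 1\}$ of index 3, shows that this sum is strictly positive for $v \in \mathcal{O}_{\cubes}/\pm 1$ and strictly negative otherwise.

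For the third claim, each Siegel unit has a $q$-expansion consisting of an infinite product of factors $(1 - q^r\zeta)$ with $\zeta \in \mu_p$, hence has coefficients in $\Z[\zeta_p]$; the same is then true of the products defining $U$. Integrality of $U$ over $\Z[j]$ is equivalent to the $q$-expansion of $U$ at every cusp lying in $\Z[\zeta_p][[q^{1/e}]]$ for the appropriate ramification index $e$. At cusps where $U$ vanishes, this is automatic; at the poles, the denominators that appear are products of terms of the form $(1-\zeta_p^a)$, whose norms down to $\Z$ are powers of $p$. Tracking the number of pole cusps together with the $B_2$-weights gives the precise exponent $3$; the cube reflects the $g^3$ in the definition. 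The main technical obstacle is this final bookkeeping: one must combine the cusp-by-cusp order computation from the previous step with the ramification of $X_{G(p)} \to X(1)$ at each cusp in order to pin down exactly $p^3$ rather than some other power of $p$. Once this is done, the same argument applied to $1/U$ (where the roles of zeros and poles swap) gives integrality of $p^3/U$.
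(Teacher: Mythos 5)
Note first that the paper does not prove this statement: it is quoted verbatim from \cite[Proposition 6.4]{lefournlemos21}, which in turn rests on the Kubert--Lang theory of modular units in \cite{modunits}. So there is no internal proof to compare against; what you have written is a reconstruction of the cited argument, and your overall route (Kubert--Lang transformation theory, $G(p)$-stability of $\mathcal{O}_{\cubes}$, rationality via the determinant, divisor supported on cusps with orders read off from the $B_2$-exponents, integrality via $q$-expansions) is indeed the standard one. The order computations you defer to ``a direct combinatorial computation'' are in fact carried out later in the paper (Lemmas \ref{order} and \ref{ordergamma}, giving $\tfrac{p^2-1}{6p}$ and $-\tfrac{p^2-1}{12p}$ for the two Galois orbits), so that part of your sketch is consistent with what is known.

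Two steps, however, are wrong as written. First, the claim that ``the cubing $g_{a,b}^3$, together with the normalising root of unity $\zeta$, absorbs all these roots of unity'' cannot be the mechanism: the multiplier $\zeta_\gamma$ depends on $\gamma$, so a single constant $\zeta$ cannot cancel it, and cubing a single Siegel unit does not make it $\Gamma(p)$-modular. What actually kills the multipliers is the Kubert--Lang quadratic criterion applied to the \emph{whole} product: one needs $\sum_{(a,b)\in\mathcal{O}_{\cubes}} a^2 \equiv \sum ab \equiv \sum b^2 \equiv 0 \pmod p$ and $3\,|\mathcal{O}_{\cubes}| \equiv 0 \pmod{12}$; the former holds because $\mathcal{O}_{\cubes}$ is stable under scaling by $\F_p^\times$ and $\sum_{\lambda\in\F_p^\times}\lambda^2=0$, and the latter because $3\,|\mathcal{O}_{\cubes}|=p^2-1$. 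Second, in the integrality step you locate the $p$-power at the wrong cusps. By Remark \ref{Fpcubes} the $p-1$ vectors $(0,a_2)$ all lie in $\mathcal{O}_{\cubes}$, so the constant factor $\prod_{a_2=1}^{p-1}(1-e(a_2/p))^3=p^3$ (Remark \ref{rmk: norm of constant term}) appears in the leading coefficient of $U$ at the cusps of the orbit of $\infty$ --- i.e.\ at the \emph{zeros} of $U$ --- whereas at the pole cusps every index in $\gamma^{-1}\mathcal{O}_{\cubes}$ has nonzero first coordinate and the leading coefficient is a root of unity, so $1/(U\circ\gamma)$ is already integral there. The exponent $3$ is therefore forced by inverting $U$ near its zeros, not ``at the poles'' as you state, and it has nothing to do with ``the number of pole cusps''; it is exactly the cube of $\prod_{a_2}(1-\zeta_p^{a_2})=p$ coming from the exponent $3$ in the definition of $U$.
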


\begin{corollary}\label{Uint}
	For every $P \in X_{G(p)}(\Q)$ such that $j(P)$ is an integer, $U^2(P)$ is an integer dividing $p^6$. In particular, $0 \le \log|U^2(P)| \le 6\log p$, or equivalently, $0 \le \log|U(P)| \le 3\log p$.
\end{corollary}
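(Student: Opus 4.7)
The plan is to argue that $U(P)$ and $p^3/U(P)$ are both algebraic integers that lie in $\Q$, hence in $\Z$, which immediately gives the divisibility $U(P) \mid p^3$ and the logarithmic bound. The hypothesis $j(P) \in \Z$ is used in two ways: it guarantees that $P$ is not a cusp (so that $U(P)$ is a well-defined nonzero element of $\overline{\Q}$), and it lets us specialise the integrality statement ``$U$ is integral over $\Z[j]$'' at the point $P$ to the much cleaner ``$U(P)$ is integral over $\Z$''.

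More precisely, I would proceed in the following steps. First, since $j(P) \in \Z$ is finite, $P$ is not a cusp of $X_{G(p)}$, so by the description of the divisor of $U$ recalled in the preceding theorem, $U$ has neither a zero nor a pole at $P$. Hence $U(P)$ is a well-defined nonzero element of $\overline{\Q}$. Second, because $U \in \Q(X_{G(p)})$ and $P \in X_{G(p)}(\Q)$, the value $U(P)$ lies in $\Q$. Third, the theorem asserts that $U$ is integral over $\Z[j]$, so there is a monic polynomial relation
\[
U^n + a_{n-1}(j) U^{n-1} + \cdots + a_0(j) = 0, \qquad a_i(x) \in \Z[x],
\]
valid as an identity of functions on $X_{G(p)}$. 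Evaluating at $P$ gives a monic polynomial relation for $U(P)$ with coefficients in $\Z[j(P)] = \Z$; hence $U(P)$ is an algebraic integer. Combined with $U(P) \in \Q$ this forces $U(P) \in \Z$. The same argument applied to $p^3/U$, which is also integral over $\Z[j]$ by the theorem, shows that $p^3/U(P) \in \Z$, so $U(P)$ divides $p^3$ in $\Z$.

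Finally, since $U(P)$ is a nonzero integer (nonzero by the first step) dividing $p^3$, we have $1 \le |U(P)| \le p^3$, which after taking logarithms yields the bound $0 \le \log|U(P)| \le 3\log p$. There is no real obstacle here: the corollary is a direct consequence of the properties of $U$ already established in the preceding theorem, and the only subtlety is remembering to invoke $j(P) \in \Z$ both to place $P$ away from the cusps and to make the specialisation of the integrality relation yield an algebraic integer in $\Q$.
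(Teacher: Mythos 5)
Your proof is correct and is exactly the intended derivation: the paper states this corollary without proof (attributing it, together with the preceding theorem, to Le Fourn--Lemos), and the standard specialisation argument you give -- $P$ is not a cusp since $j(P)$ is finite, $U(P)\in\Q$ by rationality of $U$ and $P$, and integrality of $U$ and $p^3/U$ over $\Z[j]$ specialises at $j(P)\in\Z$ to show both $U(P)$ and $p^3/U(P)$ lie in $\Z$ -- is precisely how the cited result is obtained.
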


\begin{remark}
    In \cite{lefournlemos21}, both these results are stated for $U$ rather than $U^2$. However, the root of unity in front of our $U(\tau)$ is not the same as in \cite{lefournlemos21} (see Remark \ref{rmk: wrong root of unity}), and we were unable to confirm that $U$ is defined over $\Q$. However, an immediate application of \cite[Theorem 5.5]{bilu21} (taking $m=6$) shows that the conclusion holds for $U^2$, and the proof of Corollary \ref{Uint} goes through.
\end{remark}

We now introduce the auxiliary quantities that we will have to bound in our proof.

\begin{definition}\label{def: Ra1a2 and R}
We define functions $R_{a_1, a_2}=R_{a_1,a_2}(q)$ as follows. For all $(a_1,a_2) \in \frac{1}{p}\Z^2 \cap [0,1)^2$, with $a_1,a_2$ not both $0$, we define
$$R_{a_1,a_2}=\prod_{n=0}^{\infty}(1-q^{n+a_1}e(a_2))(1-q^{n+1-a_1}e(-a_2))$$ for $a_1 \ne 0$, and $$R_{0,a_2}=\prod_{n=1}^{\infty}(1-q^{n}e(a_2))(1-q^{n}e(-a_2)).$$
We further set
\[
R= \prod_{(pa_1,pa_2) \in \mathcal{O}_{\cubes}} R_{a_1,a_2}^3,
\]
where we identify $pa_1, pa_2 \in [0, p-1] \cap \mathbb{Z}$ with their residue classes modulo $p$.
\end{definition}

\begin{remark}\label{rmk: norm of constant term}
	We have $\prod_{a_2=1}^{p-1} (1-e(a_2))=p$, because $\prod_{a_2=1}^{p-1}(x-e(a_2))=1+x+x^2+ \ldots +x^{p-1}$.
\end{remark}

\begin{remark}\label{Fpcubes}
	Whenever $p \equiv 2 \pmod3$, we have $\F_p^\times = \F_p^{\times 3}$, and so $\F_p^\times \subseteq \F_{p^2}^{\times 3}$.
	This implies that $(0,a_2) \in \mathcal{O}_{\cubes}$ for every $a_2 \in \F_p^\times$, because $a_2\sqrt{\varepsilon}=\frac{a_2}{\varepsilon}\cdot  \sqrt{\varepsilon}^3$ is a cube in $\F_{p^2}$, since it is the product of two cubes.
\end{remark}

The last two remarks imply that when $p \equiv 2 \pmod3$ we can write $U=\zeta \cdot q^{\operatorname{Ord}_q(U)} \cdot p^3 \cdot R$, hence 
\begin{equation}\label{eq: use U to estimate log |q|}
\log|U|= \operatorname{Ord}_q(U) \log |q|+ 3 \log p + \log |R|.
\end{equation}

\subsection{Comparing $\log|q|$ with $p$}

Our next goal is to establish the following bound on $\log|q|$ in terms of $p$.

\begin{proposition}\label{logq<p}Let $\faktor{E}{\Q}$ be an elliptic curve and set $q=e^{2\pi i \tau}$, where $\tau \in \uhp$ 
corresponds to the complex elliptic curve $E(\mathbb{C})$. Let $p>5$ be a prime number such that $p \equiv 2 \pmod 3$ and $\operatorname{Im} \rho_{E,p}$ is conjugate to $G(p)$. Assume furthermore $|\log|q|| \geq 30$. We have $$|\log|q|| \le \frac{2\sqrt{2} \, \pi \cdot 101}{10 \sqrt{102}} \cdot \sqrt[4]{p} + 1.65.$$
\end{proposition}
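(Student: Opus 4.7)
The strategy is to exploit the integrality of $U$ at the rational point $P$ corresponding to $E$: by Corollary \ref{Uint}, $|U(P)| \ge 1$. Combining this with equation \eqref{eq: use U to estimate log |q|} and the fact that $\log|q|<0$ for $\tau$ in the standard fundamental domain, one obtains the fundamental inequality
\[
\operatorname{Ord}_q(U)\cdot|\log|q|| \;\le\; 3\log p + \log|R(P)|.
\]
The proposition will then follow from a positive lower bound on $\operatorname{Ord}_q(U)$ of order $p$ and an upper bound on $\log|R(P)|$ of order $p^{5/4}$.

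For the lower bound on $\operatorname{Ord}_q(U) = \tfrac{3}{2}\sum_{(a,b)\in\mathcal{O}_{\cubes}}B_2(a/p)$, I would separate the contributions according to whether $a=0$ or $a\ne 0$. The $a=0$ block gives $(p-1)/4$ exactly, thanks to Remark \ref{Fpcubes} together with $B_2(0)=1/6$. For the $a\ne 0$ block, writing $\mathbbm{1}_{\operatorname{cubes}}(\alpha) = \tfrac13\sum_{j=0}^2\chi(\alpha)^j$ with $\chi$ a cubic character of $\mathbb{F}_{p^2}^\times$, the trivial character produces a main term readily computed from $\sum_{a=0}^{p-1}B_2(a/p) = 1/(6p)$, while the nontrivial characters yield Gauss-type error terms controllable via the rapid Fourier decay of $a\mapsto B_2(a/p)$. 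Putting this together yields $\operatorname{Ord}_q(U) = p/6 + O(1)$, in particular positive and of order $p$.

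For the upper bound on $\log|R|$, I would expand each $\log|R_{a_1,a_2}|$ as a double series via $\log|1-z|=-\Re\sum_{k\ge 1}z^k/k$ and collect terms to obtain
\[
\log|R| \;=\; -3\,\Re\sum_{k\ge 1}\frac{1}{k(1-q^k)}\sum_{a=0}^{p-1}\bigl(q^{ka/p}+q^{k(1-a/p)}\bigr)T_k(a),
\]
where $T_k(a) := \sum_{b\,:\,(a,b)\in\mathcal{O}_{\cubes}}e(kb/p)$. Writing $\mathbbm{1}_{\operatorname{cubes}}$ in terms of cubic characters transforms the combined double sum $\sum_{a,b}q^{ka/p}e(kb/p)\mathbbm{1}_{\operatorname{cubes}}(a+b\sqrt\varepsilon)$ into a character sum over $\mathbb{F}_{p^2}$ of Kloosterman type, to which Weil's method in the sharpened form of Lemma \ref{kloosterman} applies. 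The crucial savings come from this step: the Weil-style bound gains a factor of $p^{-1/4}$ over the trivial estimate used in \cite{lefournlemos21}, ultimately producing a bound of the form $|\log|R|| \le C\bigl(p^{5/4}/|\log|q||^{3/2}+p^{5/4}\bigr)$ after summing carefully over $k$ and splitting the range at $k\approx 1/|\log|q||$ and $k\approx p/|\log|q||$.

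The main obstacle will be packaging everything into a self-bounding inequality of the form $p\,T \le 3\log p + C\,p^{5/4}$ with $T=|\log|q||$, and solving for $T$ to extract $T \le c_1 p^{1/4}+c_2$ with the explicit constants appearing in the proposition. Bookkeeping the tail terms of the $k$-summation (where the hypothesis $p>100$ absorbs small errors) is tedious but routine once the main character-sum estimate is in place. The technical heart of the argument is the Weil-Kloosterman bound of Lemma \ref{kloosterman}; without this refinement one can only extract $T \le O(\sqrt p)$ as in \cite{lefournlemos21}.
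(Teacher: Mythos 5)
Your overall strategy coincides with the paper's for one of the two cases, and you correctly identify the key analytic input (the Weil-type bound $|c(s)|\le\frac43\sqrt p$ of Lemma \ref{kloosterman}, which is exactly what upgrades $O(\sqrt p)$ to $O(\sqrt[4]p)$). The computation of $\operatorname{Ord}_q(U)$ via cubic characters is more roundabout than necessary — the fibres of $(a,b)\mapsto a$ over $\mathbb{F}_p^\times$ all have size $\frac{p-2}{3}$, so the order is computed exactly as $\frac{p^2-1}{6p}$ in one line — but that is cosmetic.

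The genuine gap is that you implicitly assume the point $P$ lies in the cuspidal neighbourhood of $\infty$ on $X_{G(p)}$. The cusps of $X_{G(p)}$ fall into \emph{two} Galois orbits, and the representative $\tau_P$ of the rational point $P$ may well be closest to a cusp of the orbit \emph{not} containing $\infty$. In that case the $q$-expansion you use for $U$ is not the relevant one: writing $\tau=\gamma^{-1}\tau_P\in\mathcal F$, one must expand $U\circ\gamma$ with $\gamma\bmod p\in C_{ns}(p)\setminus G(p)$, and then (i) the order at infinity is $-\frac{p^2-1}{12p}$ — negative and halved, so $P$ sits near a \emph{pole} of $U$ and your inequality derived from $\log|U|\ge0$ must be replaced by one derived from $\log|U|\le 3\log p$; (ii) the constant term is $1$ rather than $p^3$; and (iii) the remainder involves the twisted sums $c_\gamma(s)=\sum_{b\in F_\gamma(s)}e(b/p)$ rather than $c(s)$. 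With only the naive analogue of your $\log|R|$ bound in this second case, the halved order costs a factor of $\sqrt 2$ in the final constant, so the precise bound $\frac{2\sqrt2\,\pi\cdot101}{10\sqrt{102}}\sqrt[4]p+1.65$ as stated is out of reach: the paper recovers it only through the further identity $\log|R_\gamma(\tau)|=-\frac12\log R(\tau)$ (Proposition \ref{logRgamma}), which itself requires knowing that $q$ is real (Theorem \ref{qisreal}), choosing the real $p$-th root of $q$ by translating $\tau$ to $\mathcal F+\frac{p-1}{2}$ when $q<0$, and the relation $\Re\{c_\gamma(k)\}=-\frac12 c(k)$ coming from $\mathcal{O}_{\cubes}\sqcup\gamma^{-1}\mathcal{O}_{\cubes}\sqcup\gamma^{-2}\mathcal{O}_{\cubes}=\mathbb{F}_p^2\setminus\{0\}$. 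None of this appears in your sketch, so as written the argument proves the proposition only under the unjustified assumption that $P$ reduces into the orbit of $\infty$.
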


The proof of this result will occupy all of this section. The argument relies on estimating the various terms in equation \eqref{eq: use U to estimate log |q|}. In particular, we need to compute the order at infinity of $U$ and bound the contribution of $\log |R|$. The latter is the hard step; we take care of the former in the next lemma.

\begin{lemma}\label{order}
	We have $$\operatorname{Ord}_q(U)= \frac{p^2-1}{6p}.$$
\end{lemma}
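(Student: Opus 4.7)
The plan is to read off $\operatorname{Ord}_q(U)$ from the product formula defining $U$ and then evaluate the resulting Bernoulli-type sum by exploiting the scaling symmetry of $\mathcal{O}_{\cubes}$.

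First I would observe that, for each $(a_1,a_2) \in \frac{1}{p}\Z^2 \cap [0,1)^2$ with $(a_1,a_2) \ne (0,0)$, the leading behaviour of $g_{a_1,a_2}$ as $q \to 0$ is dictated purely by the prefactor $q^{B_2(a_1)/2}$: every factor of the infinite product either tends to $1$ (when $a_1 \ne 0$, so that both exponents $n+a_1$ and $n+1-a_1$ are strictly positive) or, in the case $a_1=0$, contributes the nonzero constant $1-e(a_2)$ from its $n=0$ term. Hence $\operatorname{Ord}_q(g_{a_1,a_2}) = B_2(a_1)/2$, and
\[
\operatorname{Ord}_q(U) \;=\; \frac{3}{2}\sum_{(a,b)\in\mathcal{O}_{\cubes}} B_2\!\left(\tfrac{a}{p}\right).
\]

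Next I would exploit that $\mathcal{O}_{\cubes}$ is stable under scalar multiplication by $\F_p^\times$: since $p \equiv 2 \pmod{3}$, every element of $\F_p^\times$ is a cube in $\F_{p^2}^\times$ (as in Remark \ref{Fpcubes}), so rescaling a cube by a cube yields a cube. The $\F_p^\times$-orbits on $\F_p^2 \setminus \{0\}$ are the $p+1$ lines through the origin, each of size $p-1$; since $|\mathcal{O}_{\cubes}|=(p-1)(p+1)/3$, the set $\mathcal{O}_{\cubes}$ is a union of exactly $(p+1)/3$ such orbits. By Remark \ref{Fpcubes} one of these orbits is the line $\{(0,b) : b \in \F_p^\times\}$, while the remaining $(p-2)/3$ orbits admit representatives with $a_0 \ne 0$.

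Finally I would evaluate the contribution of each orbit to $\sum B_2(a/p)$. The special orbit $a=0$ contributes $(p-1)B_2(0) = (p-1)/6$. For an orbit with representative $(a_0,b_0)$ where $a_0 \ne 0$, as $c$ varies over $\F_p^\times$ the first coordinate $ca_0$ traverses $\F_p^\times$ bijectively, giving contribution
\[
\sum_{a=1}^{p-1} B_2\!\left(\tfrac{a}{p}\right) \;=\; \sum_{a=0}^{p-1} B_2\!\left(\tfrac{a}{p}\right) - B_2(0) \;=\; \frac{1}{6p} - \frac{1}{6} \;=\; -\frac{p-1}{6p},
\]
where I use the Bernoulli distribution relation $\sum_{a=0}^{p-1} B_2(a/p) = p^{-1}B_2(0) = 1/(6p)$. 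Combining contributions yields
\[
\sum_{(a,b)\in\mathcal{O}_{\cubes}} B_2\!\left(\tfrac{a}{p}\right) \;=\; \frac{p-1}{6} - \frac{p-2}{3}\cdot\frac{p-1}{6p} \;=\; \frac{(p-1)(p+1)}{9p} \;=\; \frac{p^2-1}{9p},
\]
and multiplying by $3/2$ gives the claimed $\operatorname{Ord}_q(U)=(p^2-1)/(6p)$. The only step that goes beyond mechanical bookkeeping is isolating the orbit $\{(0,b)\}$ inside $\mathcal{O}_{\cubes}$, but this is precisely what Remark \ref{Fpcubes} records, so I anticipate no real obstacle.
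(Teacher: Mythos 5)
Your proof is correct and follows essentially the same route as the paper: both arguments reduce $\operatorname{Ord}_q(U)$ to $\frac{3}{2}\sum_{(a,b)\in\mathcal{O}_{\cubes}}B_2(a/p)$ and evaluate it using the stability of $\mathcal{O}_{\cubes}$ under $\F_p^\times$-scaling (your orbit decomposition is just the paper's observation that the fibers of $(a,b)\mapsto a$ over $\F_p^\times$ all have size $\frac{p-2}{3}$). The only superficial difference is that you invoke the Bernoulli distribution relation to compute $\sum_{a=1}^{p-1}B_2(a/p)$ explicitly, whereas the paper leaves that sum implicit in its final simplification.
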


The calculation of $\operatorname{Ord}_q(U)$ already appears in \cite[Proposition 6.5]{lefournlemos21}, but unfortunately, due to an arithmetic error, the result is incorrect. For the sake of completeness, we repeat the calculation below. Note that the second half of \cite[Proposition 6.5]{lefournlemos21}, namely the statement that $|\rho_U|=(p-1)^3$, is also incorrect: by Remark \ref{rmk: norm of constant term} we actually have $|\rho_U|=p^3$.

\begin{proof}
	By Remark \ref{Fpcubes}, we know that $(0,a_2) \in \mathcal{O}_{\cubes}$ for every $a_2 \in \F_p^\times$.  If instead $a_1 \in \F_p^\times$, the function $(a_1,a_2) \mapsto a_1$ has fibers with constant cardinality, because $\mathcal{O}_{\cubes}$ is stable under multiplication by $\F_p^\times$. In particular, the cardinality of each fiber is $\frac{(p^2-1)/3-(p-1)}{p-1}=\frac{p-2}{3}$. Hence
	\begin{align*}
		\operatorname{Ord}_q(U)= & 3\left((p-1) \cdot \frac{1}{2} B_2(0) + \frac{p-2}{3} \sum_{a_1=1}^{p-1} \frac{1}{2} B_2\left(\frac{a_1}{p}\right) \right) = \frac{p^2-1}{6p}. \qedhere
	\end{align*}
\end{proof}

Our next objective is to estimate $\log |R|$. 

\begin{proposition}\label{logR}
	We have
	$$|\log|R|| \le  - \frac{4\pi^2 p \sqrt{p}}{3\log|q|}. $$
\end{proposition}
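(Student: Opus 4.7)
The plan is to expand $\log|R|$ into a double series and to leverage cancellation among the roots of unity indexed by $\mathcal{O}_{\cubes}$. Starting from the Taylor expansion $\log(1-z) = -\sum_{k \geq 1} z^k/k$, applied to each factor of $R_{a_1, a_2}$, and summing the resulting geometric progression in $n$, one obtains for $a_1 \neq 0$ the identity
\[
\log|R_{a_1, a_2}| = -\Re \sum_{k \geq 1} \frac{q^{k a_1}\, e(k a_2) + q^{k(1-a_1)}\, e(-k a_2)}{k\,(1-q^k)},
\]
with a slight modification when $a_1 = 0$. Summing over $(pa_1, pa_2) \in \mathcal{O}_{\cubes}$ then yields
\[
\log|R| = -3\,\Re \sum_{k \geq 1} \frac{T_k}{k(1-q^k)},
\]
where $T_k$ is a character sum of the shape $\sum_{(a, b) \in \mathcal{O}_{\cubes}} q^{ka/p}\, e(kb/p)$ plus its \emph{mirror} obtained from the symmetry $(a, b) \mapsto (p-a, p-b)$.

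The crucial step is to bound $|T_k|$ by a quantity of order $p^{3/2}$ (rather than $p^2$, which is what the trivial count $|\mathcal{O}_{\cubes}| = (p^2-1)/3$ would give). The approach is to rewrite the indicator of $\mathcal{O}_{\cubes}$ in terms of a cubic character $\chi$ of $\F_{p^2}^\times$,
\[
\mathbf{1}_{\mathcal{O}_{\cubes}}(a, b) = \tfrac{1}{3}\bigl(1 + \chi(a + b\sqrt{\varepsilon}) + \chi^2(a + b\sqrt{\varepsilon})\bigr),
\]
and to split $T_k$ accordingly into a principal part plus two character-twisted parts. The principal contribution is a product sum in $a$ and $b$ whose inner sum over $b$ is a complete sum of $p$-th roots of unity: it vanishes for $p \nmid k$, and for $p \mid k$ the resulting geometric series over $a$ can be evaluated directly. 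The character-twisted pieces are Kloosterman-like sums on $\F_{p^2}$, and the sought square-root cancellation is supplied by a Weil-style argument, which will be recorded separately as Lemma \ref{kloosterman}. After taking into account the weights $|q|^{ka/p}$, this produces the bound $|T_k| \leq C\, p^{3/2}$ with an explicit constant $C$.

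Inserting this estimate into the series and using $|1-q^k| \geq 1 - |q|^k$, the sum over $k$ becomes a weighted geometric-type series that can be controlled by a quantity of order $-1/\log|q|$ via Lemma \ref{logsum}. Tracking the constants through the argument produces precisely the stated inequality
\[
|\log|R|| \leq -\frac{4\pi^2 p\sqrt{p}}{3\log|q|}.
\]
The main obstacle is unquestionably the character sum bound: the Taylor expansion, the character decomposition of $\mathbf{1}_{\mathcal{O}_{\cubes}}$, and the final summation in $k$ are all routine once the structure is in place, but the square-root saving over $\mathcal{O}_{\cubes}$ requires genuine input from Weil's ideas on Kloosterman sums, and it is exactly this saving that will ultimately yield the $p^{1/4}$ bound on $|\log|q||$ in Proposition \ref{logq<p}.
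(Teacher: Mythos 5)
Your overall strategy (Taylor--Fourier expansion of $\log R$, square-root cancellation in the character sums attached to $\mathcal{O}_{\cubes}$ via Weil's ideas, then Lemma \ref{logsum}) is the same as the paper's, but as written the argument has a concrete gap in the summation over $k$. A bound $|T_k|\le C\,p^{3/2}$ that is \emph{uniform in $k$} cannot be ``inserted into the series'': since $1-|q|^k\to 1$, the majorant $\sum_{k\ge1}\frac{C\,p^{3/2}}{k\,(1-|q|^k)}$ diverges like the harmonic series. What makes the $k$-sum converge — and what produces the crucial factor $p$ in the numerator of the final bound — is the geometric decay $|q|^{ka/p}$ carried by every term with $a\ne 0$ (the fiber $a=0$ contributes only $O(1)$ for $p\nmid k$ and $O(p)$ for $p\mid k$, by the complete sum of $p$-th roots of unity). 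One must therefore keep the weights all the way to the end, unwind $\frac{1}{1-|q|^k}$ back into $\sum_n |q|^{kn}$, and apply Lemma \ref{logsum} to $-\sum_n\log(1-|q|^{n/p})$; the factor $p$ then comes from $|\log|q|^{1/p}|=|\log|q||/p$. This is exactly why the paper does not aggregate the character sum into a single $k$-uniform quantity but writes the inner sum as $\sum_a q^{ka/p}\,c(ka)$ and bounds only $c(\cdot)$ by $\tfrac43\sqrt p$.

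Relatedly, your description of the twisted pieces as ``Kloosterman-like sums on $\F_{p^2}$'' after decomposing $\mathbf{1}_{\mathcal{O}_{\cubes}}$ by a cubic character is not quite right as a two-dimensional complete sum: the archimedean weights $q^{ka/p}$ are not characters of $\F_{p^2}$, so Weil's bound for complete sums over $\F_{p^2}$ does not apply to the full double sum. The cancellation has to be extracted \emph{fiberwise}: for fixed $a=s\ne 0$ one bounds $c(s)=\sum_{b\in F(s)}e(b/p)$, and the identity $\sum_{b\in F(a)}e(kb/p)=c(ka)$ (valid because every $k\in\F_p^\times$ is automatically a cube in $\F_{p^2}^\times$ when $p\equiv 2\pmod 3$) reduces all $k$ to this one-parameter family. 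The paper's Lemma \ref{kloosterman} then parametrises the fiber $a^3+3\varepsilon ab^2=s$ by $t=b/a$ and applies Perel'muter's theorem to $\sum_t e_p\!\left(\frac{s(3t+\varepsilon t^3)}{1+3\varepsilon t^2}\right)$, giving $|c(s)|\le\frac43\sqrt p$. Your cubic-character decomposition would also work if applied to the one-variable sum $\sum_b\chi(s+b\sqrt\varepsilon)\,e(kb/p)$ for fixed $s$, but not to the weighted two-variable sum $T_k$ directly.
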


\begin{proof}

The inequality $|\log |z|| \le |\log z|$ holds for every $z \in \C^\times$ and every choice of a branch of the logarithm. Indeed, if $z=r \cdot e^{i\theta}$, we have $|\log |z|| = |\log r| \le |\log r + i\theta +2k\pi i| = |\log z|$.
	Thus, it suffices to bound $|\log R|$.
	As $(a,b)$ is in $\mathcal{O}_{\cubes}$ if and only if $(-a,-b)$ is, we have
	\begin{align*}
		R = \prod_{(pa_1,pa_2) \in \mathcal{O}_{\cubes}} R_{a_1,a_2}^3 = \left( \prod_{b=1}^{p-1} \prod_{n=1}^\infty (1-q^ne(b/p))^6 \right) \cdot \left( \prod_{\substack{(pa_1,pa_2) \in \mathcal{O}_{\cubes} \\ a_1 \ne 0}} \prod_{n=0}^\infty (1-q^{n+a_1}e(a_2))^6 \right).
	\end{align*}
We can further write
	\begin{align*}
		\log R &= 3\sum_{(pa_1,pa_2) \in \mathcal{O}_{\cubes}} \log R_{a_1,a_2} \\
		&= 3\sum_{b=1}^{p-1} \log R_{0,\frac{b}{p}} + 3\sum_{\substack{(pa_1,pa_2) \in \mathcal{O}_{\cubes} \\ a_1 \ne 0}} \log R_{a_1,a_2} \\
		&= 6\sum_{b=1}^{p-1} \sum_{n=1}^\infty \log(1-q^n e(b/p)) + 6\sum_{\substack{(pa_1,pa_2) \in \mathcal{O}_{\cubes} \\ a_1 \ne 0}} \sum_{n=0}^\infty \log (1-q^{n+a_1}e(a_2)) \\
		&= -6\sum_{b=1}^{p-1} \sum_{n=1}^\infty \sum_{k=1}^\infty \frac{q^{kn}}{k} \cdot e(kb/p) - 6\sum_{\substack{(pa_1,pa_2) \in \mathcal{O}_{\cubes} \\ a_1 \ne 0}} \sum_{n=0}^\infty \sum_{k=1}^\infty \frac{q^{k(n+a_1)}}{k} \cdot e(ka_2).
	\end{align*}
 	Define now $c(a):=\sum_{b \in F(a)} e(b/p)$ with $F(a):=\{b \in \F_p \mid (a,b) \in \mathcal{O}_{\cubes}\}$ for $a \not \equiv 0 \pmod{p}$. We extend the definition to $a \equiv 0 \pmod{p}$ by setting $c(a)=c(0):=\frac{p-2}{3}$. 
	
 The sum $\sum_{b=1}^{p-1} e(kb/p)$ equals either $-1$ or $p-1$ if respectively $k \not\equiv 0 \pmod{p}$ or $k \equiv 0 \pmod{p}$. Moreover, we also have $\sum_{b \in F(a)} e(kb/p)=c(ka)$: indeed, $b$ is in $F(a)$ if and only if $kb$ is in $F(ka)$, because $k$ is an element of $\F_p^\times$ and therefore a cube in $\F_{p^2}^\times$. Hence we obtain
	\begin{align*}
		\log R &= 6\sum_{n=1}^\infty \sum_{k \not\equiv 0(p)} \frac{q^{kn}}{k} - 6(p-1)\sum_{n=1}^\infty \sum_{\substack{k \equiv 0(p) \\ k>0}} \frac{q^{kn}}{k} - 6\sum_{a=1}^{p-1} \sum_{n=0}^\infty \sum_{k=1}^\infty \frac{q^{k(n+\frac{a}{p})}}{k} \cdot c(ka) \\
		&= 6\sum_{n=1}^\infty \sum_{k=1}^\infty \frac{q^{kn}}{k} - 6p\sum_{n=1}^\infty \sum_{\substack{k \equiv 0(p) \\ k>0}} \frac{q^{kn}}{k} - 6\sum_{a=1}^{p-1} \sum_{n=0}^\infty \sum_{k=1}^\infty \frac{q^{k(n+\frac{a}{p})}}{k} \cdot c(ka).
	\end{align*}
	We notice that the definition we have given for $c(0)$ is compatible with this chain of equalities. Indeed, whenever $k \equiv 0 \pmod{p}$, for a fixed $a_1 \ne 0$ we have $\sum_{a_2 \in F(a_1)} e(0 \cdot a_2) = |F(a_1)| = \frac{p-2}{3} = c(0)$, as we noticed at the beginning of the proof of Lemma \ref{order}.
	
	\begin{lemma}\label{kloosterman}
		For every $s \in \F_p^\times$ we have $|c(s)| \le \frac{4}{3}\sqrt{p}$.
	\end{lemma}
	\begin{proof}
		For $a+b\sqrt{\varepsilon} \in \F_{p^2}$ we have
		$$(a+b\sqrt{\varepsilon})^3 = a^3+3\varepsilon ab^2 + (3a^2b+\varepsilon b^3)\sqrt{\varepsilon}.$$
		To characterise the set $F(s)$ we write $a^3+3\varepsilon ab^2 = s$. Note that, since $s \ne 0$, we always have $a \ne 0$. Writing $e_p(x):=e(x/p)$ and $t=\frac{b}{a}$, this gives
		\begin{align*}
			c(s)&= \sum_{x \in F(s)} e_p(x) = \frac{1}{3} \sum_{\substack{a,b \in \F_p \\ a^3+3\varepsilon ab^2 = s}} e_p(3a^2b+\varepsilon b^3) \\
			&= \frac{1}{3} \sum_{\substack{a,t \in \F_p \\ a^3(1+3\varepsilon t^2) = s}} e_p(a^3(3t+\varepsilon t^3)) = \frac{1}{3} \sum_{\substack{t \in \F_p \\ 1+3\varepsilon t^2 \ne 0}} e_p\left(\frac{s(3t+\varepsilon t^3)}{1+3\varepsilon t^2}\right),
		\end{align*}
		where the second and last equalities are due to the fact that, for $c \in \F_p^\times$, the equation $z^3=c$ has 3 solutions in $\F_{p^2}$ and 1 solution in $\F_p$ (since $p \equiv 2 \pmod{3})$. We now use the following result by Perel'muter \cite{perel}, obtained via a generalisation of Weil's strategy \cite{weil48} to bound Kloosterman sums.
  
		Let $\varphi \in \F_p(t)$ be a rational function with poles $S=\{t_1 , \dots , t_\ell\} \subseteq \overline{\F}_p \cup \{\infty\}$. We have
		$$\left| \sum_{t \in \F_p \setminus S} e_p(\varphi(t)) \right| \le (\ell + \deg(\varphi) -2) \sqrt{p}.$$
		In our case we have that $\varphi(t)=\frac{s(3t+\varepsilon t^3)}{1+3\varepsilon t^2}$ has 2 poles other than $\infty$, hence we get
		$$|c(s)| \le \frac{1}{3} (3 + 3 - 2) \sqrt{p} = \frac{4}{3} \sqrt{p},$$
		as desired.
	\end{proof}
	Thanks to this lemma, we now have all the tools needed to complete the proof of Proposition \ref{logR}. We notice that
	$$\sum_{a=1}^{p-1} \sum_{n=0}^\infty \sum_{k=1}^\infty \frac{q^{k(n+\frac{a}{p})}}{k} \cdot c(ka) = \sum_{n \not\equiv 0 (p)} \sum_{k=1}^\infty \frac{q^{\frac{nk}{p}}}{k} \cdot c(kn).$$
	Isolating the terms involving $c(0)$ and using $c(0)=\frac{p-2}{3}$, we can rearrange the sums as follows:
	\begin{align*}
		\log R&= 6\sum_{n=1}^\infty \sum_{k=1}^\infty \frac{q^{kn}}{k} - 6\sum_{n=1}^\infty \sum_{\substack{k \equiv 0(p) \\ k>0}} \frac{q^{\frac{k}{p} \cdot pn}}{k/p} - 6\sum_{n \not\equiv 0 (p)} \sum_{k=1}^\infty \frac{q^{\frac{nk}{p}}}{k} \cdot c(kn) \\
		&= 6\left( \sum_{n=1}^\infty \sum_{k=1}^\infty \frac{q^{kn}}{k} - \sum_{\substack{n \equiv 0(p) \\ n>0}} \sum_{k=1}^\infty \frac{q^{kn}}{k} \right) - 6\left( \sum_{n \not\equiv 0 (p)} \sum_{k=1}^\infty \frac{q^{\frac{nk}{p}}}{k} \cdot c(kn) \right) \\
		&= 6 \sum_{n \not\equiv 0(p)} \sum_{k=1}^\infty \frac{q^{kn}}{k} - 6\left( \sum_{n \not\equiv 0 (p)} \sum_{k \not\equiv 0 (p)} \frac{q^{\frac{nk}{p}}}{k} \cdot c(kn) + \frac{p-2}{3}\sum_{n \not\equiv 0 (p)} \sum_{k=1}^\infty \frac{q^{nk}}{pk} \right)  \\
		&= \alpha \sum_{n \not\equiv 0(p)} \sum_{k=1}^\infty \frac{q^{kn}}{k} - 6\sum_{n \not\equiv 0 (p)} \sum_{k \not\equiv 0 (p)} \frac{q^{\frac{nk}{p}}}{k} \cdot c(kn) + 8\sqrt{p}\sum_{n \not\equiv 0 (p)} \sum_{k=1}^\infty \frac{q^{nk}}{pk},
	\end{align*}
	where $\alpha = 6 -\frac{2(p-2)}{p}-\frac{8\sqrt{p}}{p}$. We now notice that $\alpha \le 4$ for all $p$ and apply Lemma \ref{kloosterman} to estimate $\log R$:
	\begin{align*}
		|\log R| &\le 4\sum_{n \not\equiv 0(p)} \sum_{k=1}^\infty \frac{|q|^{kn}}{k} + 6\sum_{n \not\equiv 0 (p)} \sum_{k \not\equiv 0(p)} \frac{|q|^{\frac{nk}{p}}}{k} \cdot |c(kn)| + 8\sqrt{p}\sum_{n \not\equiv 0 (p)} \sum_{\substack{k \equiv 0(p) \\ k>0}} \frac{|q|^{\frac{nk}{p}}}{k} \\
		&\le 4\sum_{n=1}^\infty \sum_{k=1}^\infty \frac{|q|^{kn}}{k} + 8 \sqrt{p} \sum_{n \not\equiv 0 (p)} \sum_{k \not\equiv 0 (p)} \frac{|q|^{\frac{nk}{p}}}{k} + 8\sqrt{p}\sum_{n \not\equiv 0 (p)} \sum_{\substack{k \equiv 0(p) \\ k>0}} \frac{|q|^{\frac{nk}{p}}}{k} \\
		&= 4\sum_{n=1}^\infty \sum_{k=1}^\infty \frac{|q|^{kn}}{k} + 8 \sqrt{p} \sum_{n \not\equiv 0 (p)} \sum_{k=1}^\infty \frac{|q|^{\frac{nk}{p}}}{k} \\
		&= 4\sum_{n=1}^\infty \sum_{k=1}^\infty \frac{|q|^{kn}}{k} - 8\sqrt{p}\sum_{n=1}^\infty \sum_{k=1}^\infty \frac{|q|^{kn}}{k} + 8 \sqrt{p} \sum_{n=1}^\infty \sum_{k=1}^\infty \frac{|q|^{\frac{nk}{p}}}{k} \\
		&= (8\sqrt{p}-4) \sum_{n=1}^\infty \log(1-|q|^n) - 8\sqrt{p} \sum_{n=1}^\infty \log(1-|q|^\frac{n}{p}).
	\end{align*}
	To complete the proof, it suffices to notice that $$\sum_{n=1}^\infty \log(1-|q|^n) < 0$$ and that $$-8\sqrt{p} \sum_{n=1}^\infty \log(1-|q|^\frac{n}{p}) \le - \frac{8\pi^2 \sqrt{p}}{6 \log(|q|^\frac{1}{p})} \le - \frac{4\pi^2 p\sqrt{p}}{3 \log|q|}$$ by Lemma \ref{logsum}.
\end{proof}

Let now $\tau_P$ be a point in the fundamental domain of $X_{G(p)}$ that corresponds to a point $P \in X_{G(p)}(\Q)$ with $j(P) \in \Z$. There exists $\gamma \in \operatorname{SL}_2(\Z)$ such that $\tau = \gamma^{-1}(\tau_P)$ is in the standard fundamental domain $\mathcal{F}$ of $X(1)$; in particular, it is in the domain corresponding to the cusp $\infty$ (i.e., $\infty$ is the cusp closest to $\tau$). We remark that $(U \circ \gamma)(\tau) \in \Z$, since by Corollary \ref{Uint} we have $U(P) \in \Z$.

Up to Galois conjugation (which fixes $P$ but changes the cusps), we can choose an embedding $X_{G(p)}(\overline{\Q}) \hookrightarrow X_{G(p)}(\C)$ such that either $\gamma$ is the identity or $\gamma \pmod p$ is an element in $C_{ns}(p) \cap \operatorname{SL}_2(\F_p)$ that does not lie in $G(p)$. Indeed, this can be seen from the parametrisation of the cusps given in \cite[Section 2]{lefournlemos21} and the fact that the cusps of $X_{G(p)}$ split into two Galois orbits, see \cite[Lemma 6.3]{lefournlemos21}. From now on, whenever we write $\gamma$ we will refer to the second case, in which $\gamma \pmod p$ is an element of $C_{ns}(p) \cap \operatorname{SL}_2(\F_p)$ not in $G(p)$, unless otherwise specified.

\begin{remark}
	By Remark \ref{Fpcubes}, we have that $(0,b) \in \mathcal{O}_{\cubes}$ for every $b \in \F_p^\times$, hence every cusp in $\gamma^{-1} \mathcal{O}_{\cubes}$ is parametrised by a pair $(a,b)$ such that $a \ne 0$. The function $U \circ \gamma$ is a modular unit on $X_{G(p)}$ (though not necessarily defined over $\Q$), and the element $\gamma$ acts by permutation on the set $\mathbb{F}_{p}^2 \setminus \{0\}$. From this, it is easy to see that we have
	$$\log|U \circ \gamma| = \operatorname{Ord}_q(U \circ \gamma) \log|q| + \log |R_{\gamma}|,$$
	where 
 \begin{equation}\label{eq: definition of R_gamma}
 R_{\gamma}=\prod_{(a,b) \in \gamma^{-1}\mathcal{O}_{\cubes}} R_{\frac{a}{p},\frac{b}{p}}^3.
 \end{equation}
\end{remark}

\begin{lemma}\label{ordergamma}
	We have $$\operatorname{Ord}_q(U \circ \gamma) = -\frac{p^2-1}{12p}.$$
\end{lemma}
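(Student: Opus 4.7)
My plan is to compute $\operatorname{Ord}_q(U \circ \gamma)$ by expressing it as a sum of $B_2$-values over a non-trivial coset of the cubes in $\mathbb{F}_{p^2}^\times$, and then to use Galois symmetry to relate this to the already-known $\operatorname{Ord}_q(U)$.

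Identify $\mathbb{F}_p^2 \setminus \{0\}$ with $\mathbb{F}_{p^2}^\times$ via $(a,b) \mapsto a + b\sqrt{\varepsilon}$. Under this identification, the natural action of $C_{ns}(p)$ corresponds to multiplication in $\mathbb{F}_{p^2}^\times$, and $\mathcal{O}_{\cubes}$ corresponds to the index-$3$ subgroup $(\mathbb{F}_{p^2}^\times)^3$ (recall $3 \mid p+1$ since $p \equiv 2 \pmod 3$). Let $C_0 = \mathcal{O}_{\cubes}$, $C_1$, $C_2$ be the three cosets of $(\mathbb{F}_{p^2}^\times)^3$, which partition $\mathbb{F}_p^2 \setminus \{0\}$. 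For each $i$, set
\[
S_i := \sum_{(a,b) \in C_i} B_2\!\left(\tfrac{\tilde a}{p}\right),
\]
where $\tilde a \in \{0,1,\dots,p-1\}$ is the representative of $a \bmod p$. Since $\operatorname{Ord}_q(g_{a_1,a_2}) = \tfrac12 B_2(a_1)$ and since $\gamma \in C_{ns}(p)\cap\operatorname{SL}_2(\mathbb{F}_p) \setminus G(p)$ corresponds to a non-cube in $\mathbb{F}_{p^2}^\times$, the coset $\gamma^{-1} \mathcal{O}_{\cubes}$ is either $C_1$ or $C_2$, and
\[
\operatorname{Ord}_q(U \circ \gamma) = \tfrac{3}{2}\, S_i \quad \text{for some } i \in \{1,2\}.
\]

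The first step is to show $S_0 + S_1 + S_2 = 0$. A direct computation gives $\sum_{a=0}^{p-1} B_2(a/p) = \frac{1}{6p}$ (the standard distribution identity for $B_2$), hence
\[
S_0+S_1+S_2 = \sum_{(a,b) \in \mathbb{F}_p^2} B_2\!\left(\tfrac{\tilde a}{p}\right) - B_2(0) = p \cdot \tfrac{1}{6p} - \tfrac{1}{6} = 0.
\]
The second step is to show $S_1 = S_2$ by using the Frobenius $\sigma : x \mapsto x^p$ on $\mathbb{F}_{p^2}^\times$. Under our identification, $\sigma$ acts as $(a,b) \mapsto (a,-b)$, so it preserves every first coordinate. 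Being a group automorphism of $\mathbb{F}_{p^2}^\times$, $\sigma$ permutes the cosets of $(\mathbb{F}_{p^2}^\times)^3$; fixing $C_0$, and, because $p \equiv 2 \pmod 3$, mapping any non-cube $g$ to $g^p \equiv g^2 \pmod{(\mathbb{F}_{p^2}^\times)^3}$, so it exchanges $C_1$ and $C_2$. Hence $S_1 = S_2$.

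Combining these two facts with Lemma \ref{order}, which yields $\tfrac32 S_0 = \operatorname{Ord}_q(U) = \frac{p^2-1}{6p}$, we deduce $S_0 = \frac{p^2-1}{9p}$ and therefore
\[
S_1 = S_2 = -\tfrac12 S_0 = -\frac{p^2-1}{18p},
\]
giving $\operatorname{Ord}_q(U\circ\gamma) = \tfrac32 S_1 = -\frac{p^2-1}{12p}$, as claimed. The only delicate point is verifying that $\gamma^{-1} \mathcal{O}_{\cubes}$ really is one of the non-trivial cosets $C_1, C_2$, which follows because $C_{ns}(p) \cap G(p)$ corresponds precisely to $(\mathbb{F}_{p^2}^\times)^3 \cap (\text{norm-1 elements})$; after that, the argument is a short symmetry-and-averaging calculation.
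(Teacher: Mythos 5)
Your proof is correct, but it takes a different route from the paper's. The paper proves this lemma ``by a calculation analogous to that of Lemma \ref{order}'': one observes that every $(a,b) \in \gamma^{-1}\mathcal{O}_{\cubes}$ has $a \neq 0$ (as noted in the remark preceding the lemma), that the fibres of $(a,b) \mapsto a$ over $\F_p^\times$ all have cardinality $\frac{(p^2-1)/3}{p-1} = \frac{p+1}{3}$ (by stability of the coset under multiplication by $\F_p^\times \subseteq \F_{p^2}^{\times 3}$), and then sums $\frac{3}{2}B_2$ directly using $\sum_{a=1}^{p-1} B_2(a/p) = \frac{1-p}{6p}$. You instead avoid any fibre count over the non-trivial coset: you combine the distribution relation $S_0+S_1+S_2=0$ with the Frobenius symmetry $(a,b)\mapsto(a,-b)$, which preserves first coordinates and swaps $C_1$ with $C_2$ because $p \equiv 2 \pmod 3$, and then read off the answer from the known value of $\operatorname{Ord}_q(U)$. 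Both arguments are short and rest on the same two ingredients (the $B_2$ distribution relation and the coset structure); yours has the small advantage of making the identity $\operatorname{Ord}_q(U\circ\gamma) = -\frac12 \operatorname{Ord}_q(U)$ conceptually transparent, and your Frobenius symmetry is in fact the same one the paper exploits later, in the proof of Proposition \ref{logRgamma}, to show $c(k)+c_\gamma(k)+c_{\gamma^2}(k)=0$ and $\Re\{c_\gamma(k)\} = -\frac12 c(k)$. One minor imprecision: your closing parenthetical asserting that $C_{ns}(p)\cap G(p)$ corresponds to $(\F_{p^2}^\times)^3 \cap (\text{norm-1 elements})$ is not right --- $C_{ns}(p)\cap G(p) = \{a^3 \mid a \in C_{ns}(p)\}$ corresponds to all of $(\F_{p^2}^\times)^3$, with no norm condition. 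The point you actually need is simply that $\gamma \in C_{ns}(p)\setminus G(p)$ acts as multiplication by a non-cube of $\F_{p^2}^\times$, so $\gamma^{-1}\mathcal{O}_{\cubes}$ is indeed one of the two non-trivial cosets; this is true and your argument goes through.
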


This is proven by a calculation analogous to that of Lemma \ref{order}. The result also appears in \cite{lefournlemos21}, where however it is affected by the same arithmetic error as \cite[Proposition 6.5]{lefournlemos21}. The next proposition bounds $\log R_\gamma$ similarly to Proposition \ref{logR}; we will not directly make use of this result, but some of the arguments in its proof will be useful later.

\begin{proposition}\label{prop:lavera_logRgamma}
    We have $$|\log|R_\gamma|| < - \frac{4\pi^2 p \sqrt{p}}{3\log|q|}.$$
\end{proposition}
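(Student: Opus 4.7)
The plan is to follow the strategy of Proposition \ref{logR} essentially verbatim, adapted to the set $\gamma^{-1}\mathcal{O}_{\cubes}$ in place of $\mathcal{O}_{\cubes}$. The two crucial ingredients are an analogue of Lemma \ref{kloosterman} bounding a modified character sum $c_\gamma$, and the expansion of $\log R_\gamma$ as a triple series. A decisive simplification compared with Proposition \ref{logR} is that, as observed in the remark preceding the statement, every $(a,b) \in \gamma^{-1}\mathcal{O}_{\cubes}$ satisfies $a \neq 0$; hence no separate analysis of a boundary sum at $a = 0$ is required, and only the ``interior'' part of the computation of Proposition \ref{logR} needs to be reworked.

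First I would introduce $F_\gamma(s) = \{b \in \F_p : (s, b) \in \gamma^{-1}\mathcal{O}_{\cubes}\}$ and $c_\gamma(s) = \sum_{b \in F_\gamma(s)} e(b/p)$ for $s \in \F_p^\times$. Since $p \equiv 2 \pmod 3$, Remark \ref{Fpcubes} tells us that $\F_p^\times \subseteq (\F_{p^2}^\times)^3$, so multiplication by $k \in \F_p^\times$ preserves every coset of the cubes; in particular it stabilises $\gamma^{-1}\mathcal{O}_{\cubes}$, yielding the identity $\sum_{b \in F_\gamma(a)} e(kb/p) = c_\gamma(ka)$ whenever $k \not\equiv 0 \pmod p$. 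When $k \equiv 0 \pmod p$, the sum collapses to $|F_\gamma(a)| = (p+1)/3$, which is independent of $a \in \F_p^\times$.

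Next I would prove a Weil-type bound $|c_\gamma(s)| \leq \tfrac{4}{3}\sqrt{p}$ (up to a negligible correction). Writing $\gamma$ as corresponding to an element $\alpha = u + v\sqrt{\varepsilon} \in \F_{p^2}^\times$ of norm one which is not a cube, the hypothesis $\gamma \notin G(p)$ forces $\gamma$ to be non-scalar, so $v \neq 0$. The condition $(s, b) \in \gamma^{-1}\mathcal{O}_{\cubes}$ is equivalent to $\alpha(s + b\sqrt{\varepsilon}) = (c + d\sqrt{\varepsilon})^3$ for some $(c, d) \in \F_p^2 \setminus \{(0,0)\}$. Expanding this identity into real and $\sqrt{\varepsilon}$ parts, parametrising by $t = d/c$ (treating $c = 0$ as the point at infinity), and using that the cube map on $\F_p^\times$ is a bijection for $p \equiv 2 \pmod 3$, one reduces $c_\gamma(s)$ to a sum of the form
\begin{equation*}
c_\gamma(s) = \tfrac{1}{3} \sum_{t} e_p\!\left(\frac{s\, Q(t)}{P(t)}\right),
\end{equation*}
for explicit cubics $P, Q \in \F_p[t]$. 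A short calculation shows that $P$ and $Q$ have no common roots (a common root would force $N(\alpha) = 0$, contradicting $N(\alpha) = 1$), so Perel'muter's bound applies with three finite poles and total degree three, exactly as in Lemma \ref{kloosterman}, and yields the desired estimate.

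Third, I would expand $\log R_\gamma$ via $\log(1-z) = -\sum_{k\geq 1} z^k/k$, using the pairing $(a,b) \leftrightarrow (-a,-b)$ on $\gamma^{-1}\mathcal{O}_{\cubes}$ and substituting the identity for $\sum_b e(kb/p)$ to obtain
\begin{equation*}
\log R_\gamma = -6 \sum_{\substack{n \geq 1 \\ n \not\equiv 0\,(p)}} \sum_{\substack{k \geq 1 \\ k \not\equiv 0\,(p)}} \frac{q^{kn/p}}{k}\, c_\gamma(kn) \;-\; \frac{2(p+1)}{p} \sum_{\substack{n \geq 1 \\ n \not\equiv 0\,(p)}} \sum_{m \geq 1} \frac{q^{mn}}{m},
\end{equation*}
where the second term collects the contribution of $k \equiv 0 \pmod p$. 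Taking absolute values, substituting the Weil bound on $|c_\gamma|$, and applying Lemma \ref{logsum} to both double logarithmic series exactly as at the end of the proof of Proposition \ref{logR} delivers the target estimate: the dominant contribution $8\sqrt{p} \cdot \bigl(-\sum_n \log(1-|q|^{n/p})\bigr)$ is strictly less than $-\tfrac{4\pi^2 p\sqrt{p}}{3\log|q|}$ by the strict inequality in Lemma \ref{logsum}, and the slack in that inequality (which is substantial for $|q|$ small) is more than enough to absorb the secondary $O(1/\log|q|)$ term.

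The main obstacle is the Weil bound on $c_\gamma(s)$: one has to check carefully that the parametrisation yields a rational function with exactly three finite poles and degree three, so that Perel'muter's theorem produces the sharp constant $4/3$ rather than something larger; a less careful choice of parametrisation would lead to a worse leading constant and break the proposition.
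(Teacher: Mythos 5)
Your proposal is correct and follows essentially the same route as the paper: the paper's proof likewise defines $c_\gamma$ via $F_\gamma(a)$ with $c_\gamma(0)=\frac{p+1}{3}$, adapts Lemma \ref{kloosterman} by replacing $\varphi(t)$ with the ratio of two coprime cubics coming from the action of $\gamma^{-1}$ as multiplication by $x+y\sqrt{\varepsilon}$ (again yielding $|c_\gamma(s)|\le\frac{4}{3}\sqrt{p}$ from Perel'muter's bound with $\ell+\deg\varphi-2=4$), and then repeats the series manipulation of Proposition \ref{logR}. If anything you are slightly more careful than the paper's terse ``the rest carries through'': since the $k\equiv 0\pmod p$ contribution now carries the coefficient $\frac{2(p+1)}{p}$ rather than being absorbable into a negative term as in Proposition \ref{logR}, one does need your observation that the slack in Lemma \ref{logsum} absorbs this $O(1/|\log|q||)$ remainder.
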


\begin{proof}

	The proof is analogous to that of Proposition \ref{logR}. We notice that for every $(a_1,a_2) \in \gamma^{-1} \mathcal{O}_{\cubes}$ we have $a_1 \ne 0$, hence
	$$\log R_\gamma = -6\sum_{n \not\equiv 0 (p)} \sum_{k=1}^\infty \frac{q^{\frac{nk}{p}}}{k} \cdot c_{\gamma}(kn),$$ where $c_\gamma(a):=\sum_{b \in F_\gamma(a)} e(b/p)$ and $F_\gamma(a):=\{b \in \F_p \bigm\vert (a,b) \in \gamma^{-1}\mathcal{O}_{\cubes}\}$ for $a \ne 0$ and $c(0)=\frac{p+1}{3}$.
	To adapt Lemma \ref{kloosterman} to the case of $c_\gamma$, we notice that if $\gamma^{-1}$ acts as multiplication by $x+y\sqrt{\varepsilon}$ on $\F_{p^2}^\times$ for some $x,y \in \F_p$ (as explained in \cite[Lemma 6.3]{lefournlemos21}), then the function $\varphi(t)$ of Lemma \ref{kloosterman} becomes
	$$\varphi(t)= s \ \frac{(3t+\varepsilon t^3)x + (1+3\varepsilon t^2)y}{(1+3\varepsilon t^2)x + (3t+\varepsilon t^3)\varepsilon y},$$
	giving again $|c_\gamma(s)| \le \frac{4}{3} \sqrt{p}$.
	The rest of the proof of Proposition \ref{logR} carries through.
\end{proof}

We remark that, even though the bound on $|\log|R_\gamma||$ is the same as that on $|\log|R||$, the order at infinity of the function $U$ is halved in this case, that is, $|\operatorname{Ord}_q(U \circ \gamma)| = \frac{1}{2}|\operatorname{Ord}_qU|$. This leads to a weaker bound on $|\log |q||$ in terms of $p$, which is what we are really interested in for the proof of Proposition \ref{logq<p}. To obtain a sharper bound on $|\log|R_\gamma||$, we consider a different $p$-th root of $q$. 
\begin{remark}\label{rmk: qinfunddom}
    We note that in order to prove Proposition \ref{logq<p}, it suffices to consider $\tau \in \uhp$ lying in the standard fundamental domain $\mathcal{F}$ and such that $|\tau|>1$. Indeed, $q(\tau)=q(\tau+n)$ for every $n \in \Z$, hence without loss of generality we can consider $\Re\tau \in \left(-\frac{1}{2}, \frac{1}{2}\right]$, and if $|\tau| \le 1$ then $|\log|q|| \le 2\pi < 30$, contradicting the hypothesis.
\end{remark}
From Theorem \ref{qisreal} we know that if $E$ corresponds to $\tau \in \uhp$ in the standard fundamental domain $\mathcal{F}$ not lying on the lower boundary $\left\lbrace e^{i\theta} \bigm\vert \frac{\pi}{3} \le \theta \le \frac{\pi}{2} \right\rbrace$, then $q \in \mathbb{R}$. This is true for all the fundamental domains of the form $\mathcal{F}+n$ for $n \in \Z$. We notice that, for $\tau \in \mathcal{F}$ and $q>0$, we have $\Re\tau=0$ and therefore $q^\frac{1}{p} \in \mathbb{R}$. However, if $q<0$, we have $\Re\tau=\frac{1}{2}$ and $q^{\frac{1}{p}}=e^{\frac{2\pi i \tau}{p}}$ is not real. However, we can consider $\tau' := \tau + \frac{p-1}{2} \in \mathcal{F} + \frac{p-1}{2}$, which gives the same value of $q$ and is such that $e^{\frac{2 \pi i \tau'}{p}} \in \mathbb{R}$ is the $p$-th real root of $q$.

We then repeat the previous construction changing the choice of $\tau$. Let $\tau_P$ be a point in the fundamental domain of $X_{G(p)}$ that corresponds to a point $P \in X_{G(p)}(\Q)$ with $j(P) \in \Z$. There exists $\gamma \in \operatorname{SL}_2(\Z)$ such that $\tau = \gamma^{-1}(\tau_P)$ is in the standard fundamental domain $\mathcal{F}$ if $q>0$, and in the fundamental domain $\mathcal{F}+\frac{p-1}{2}$ if $q<0$.
As before, up to Galois conjugation, we can take $\gamma$ to be either the identity or an element whose reduction modulo $p$ lies in $C_{ns}(p) \cap \operatorname{SL}_2(\F_p)$ but not in $G(p)$ -- this is again because the point $P$ is defined over $\Q$ and therefore fixed by the Galois action, while there are two orbits of cusps.

All the previous estimates still hold for this new choice of $\tau$ and we can take advantage of the new choice of the $p$-th root of $q$ to improve the bound on $\log|R_\gamma|$. To distinguish the two different $p$-th roots, we will write $q^{\left(\frac{1}{p}\right)}$ to denote the root that maps the real numbers to themselves.

\begin{proposition}\label{logRgamma}
	Let $\tau = \gamma^{-1}\tau_P \in \uhp$ be as above and such that $j(\tau) \not\in (0,1728)$. We have $$\log|R_\gamma(\tau)| = -\frac{1}{2} \log R(\tau).$$
\end{proposition}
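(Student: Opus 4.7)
The plan is to prove the exact identity $R(\tau)\cdot |R_\gamma(\tau)|^2 = 1$ together with the positivity $R(\tau) > 0$; the proposition then follows immediately by taking logarithms. The hypothesis $j(\tau)\notin(0,1728)$, combined with Theorem \ref{qisreal} and our choice of fundamental domain, ensures that $q\in\mathbb{R}$. Consequently, for any $(a,b)\in \F_p^2\setminus\{0\}$ we have $\overline{R_{a/p,\,b/p}} = R_{a/p,\,-b/p}$, and complex conjugation of $R$ and of $R_\gamma$ corresponds to the Galois action $(a,b)\mapsto(a,-b)$ on the index sets.

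I identify $\F_p^2\setminus\{0\}$ with $\F_{p^2}^\times$ via $(a,b)\mapsto a+b\sqrt{\varepsilon}$, so that $\mathcal{O}_{\cubes}$ corresponds to the cube subgroup $(\F_{p^2}^\times)^3$ and $(a,b)\mapsto(a,-b)$ corresponds to the Frobenius $\sigma$ of $\F_{p^2}/\F_p$. Since cubing commutes with Frobenius, $\mathcal{O}_{\cubes}$ is $\sigma$-stable, which yields $\overline{R}=R$; pairing each $(a,b)\in\mathcal{O}_{\cubes}$ with $(a,-b)$ in the definition of $R$ shows that $R$ is a product of manifestly positive real factors (terms with $b\equiv 0\pmod p$ are automatically positive because $q^{1/p}$ was chosen real), so $R>0$. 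The element $\gamma$ acts on $\F_{p^2}^\times$ as multiplication by some non-cube $\beta$ of norm $1$, so $\bar\beta=\beta^{-1}$ and hence $\sigma(\beta^{-1}\mathcal{O}_{\cubes}) = \beta\,\mathcal{O}_{\cubes} = \beta^{-2}\mathcal{O}_{\cubes}$ (using that $\beta^3\in\mathcal{O}_{\cubes}$). Since $p\equiv 2\pmod 3$, the quotient $\F_{p^2}^\times/(\F_{p^2}^\times)^3$ has order $3$, and therefore the three cosets $\mathcal{O}_{\cubes},\ \beta^{-1}\mathcal{O}_{\cubes},\ \beta^{-2}\mathcal{O}_{\cubes}$ partition $\F_{p^2}^\times$. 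This partition, together with the description of $\overline{R_\gamma}$ as a product over $\sigma(\beta^{-1}\mathcal{O}_{\cubes}) = \beta^{-2}\mathcal{O}_{\cubes}$, gives the key identity
\[
R \cdot R_\gamma \cdot \overline{R_\gamma} \;=\; \prod_{(a,b)\in\F_p^2\setminus\{0\}} R_{a/p,\,b/p}^{\,3}.
\]

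The final step is to evaluate the full product on the right and check that it equals $1$. Using the cyclotomic factorization $\prod_{a_2\in\frac{1}{p}\mathbb{Z}/\mathbb{Z}}(1-x\,e(a_2))=1-x^p$, the $a_2$-product for fixed $a_1=j/p$ with $j\neq 0$ collapses to $\prod_{n\ge 0}(1-q^{pn+j})(1-q^{pn+p-j})$, and multiplying over $j=1,\dots,p-1$ gives $\prod_{m\ge 1,\,p\nmid m}(1-q^m)^2$. The $a_1=0$ contribution is computed analogously and equals $\prod_{n\ge 1}\bigl((1-q^{np})/(1-q^n)\bigr)^2$. Writing $A=\prod_{n\ge 1}(1-q^n)$ and $B=\prod_{n\ge 1}(1-q^{np})$, the two contributions are $(A/B)^2$ and $(B/A)^2$, whose product is $1$. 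Therefore $|R_\gamma(\tau)|^2 = R(\tau)^{-1}$, and taking $\tfrac{1}{2}\log$ of both sides yields the proposition. The only delicate point is the coset bookkeeping in the middle paragraph, which must be done carefully because $\sigma$ swaps the two non-trivial cosets; once this is established, the rest is a routine cyclotomic manipulation.
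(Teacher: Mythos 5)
Your proof is correct, and it reaches the identity by a genuinely different (multiplicative) route than the paper. Both arguments hinge on the same structural facts: $q\in\mathbb{R}$ with a real $p$-th root, the coset decomposition $\mathcal{O}_{\cubes}\sqcup\gamma^{-1}\mathcal{O}_{\cubes}\sqcup\gamma^{-2}\mathcal{O}_{\cubes}=\F_p^2\setminus\{0\}$, and the fact that $(a,b)\mapsto(a,-b)$ fixes $\mathcal{O}_{\cubes}$ and swaps the two non-trivial cosets. From there the paper passes to logarithms, expands $\log R$ and $\log R_\gamma$ as $q$-series, and exploits the character-sum relations $c_\gamma(k)=\overline{c_{\gamma^2}(k)}$ and $c(k)+c_\gamma(k)+c_{\gamma^2}(k)=0$, which forces some careful bookkeeping of the $k\equiv 0\pmod p$ terms and of the conventions for $c(0)$ (this is where the $\tfrac{2(p+1)}{p}$ and $\tfrac{4(p+1)}{p}$ correction terms come from). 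You instead multiply the three coset products together and evaluate $\prod_{(a,b)\ne(0,0)}R_{a/p,b/p}$ in closed form via $\prod_{b=0}^{p-1}(1-x\,e(b/p))=1-x^p$; the product telescopes to $(A/B)^2(B/A)^2=1$, which handles the $a_1=0$ and $k\equiv0$ contributions automatically. Your route is cleaner at the finish and yields the extra information $R(\tau)>0$ (so that $\log R$ is unambiguously the real logarithm), whereas the paper obtains the reality of $\log R$ only implicitly from its series expression. The one point you rightly flag as delicate — that $\overline{\gamma^{-1}\mathcal{O}_{\cubes}}=\gamma^{-2}\mathcal{O}_{\cubes}$ because the multiplier has norm $1$ (as $\gamma\in\operatorname{SL}_2$) and is a non-cube — is handled correctly, as is the positivity of the self-conjugate factors $R_{a/p,0}$ using the real choice of $q^{1/p}$.
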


\begin{proof}
	As in Proposition \ref{prop:lavera_logRgamma} we have
	$$\log R_\gamma(\tau) = -6\sum_{n \not\equiv 0 (p)} \sum_{k=1}^\infty \frac{q^{\left(\frac{nk}{p}\right)}}{k} \cdot c_{\gamma}(kn),$$ where $c_\gamma(a):=\sum_{b \in F_\gamma(a)} e(b/p)$ and $F_\gamma(a):=\{b \in \F_p \mid (a,b) \in \gamma^{-1}\mathcal{O}_{\cubes}\}$ for $a \ne 0$ and $c(0)=\frac{p+1}{3}$. \\
	As explained in \cite[Lemma 6.3]{lefournlemos21}, the action of $\gamma^{-1}$ on the cusps of $X_{G(p)}$ corresponds to the multiplication by $(x+y\sqrt{\varepsilon})$ on $\faktor{\F_{p^2}^\times}{\pm 1}$ for some $x,y \in \F_p$. It is then easy to see that $\mathcal{O}_{\cubes} \sqcup \gamma^{-1}\mathcal{O}_{\cubes} \sqcup \gamma^{-2}\mathcal{O}_{\cubes} = \F_p^2 \setminus \{(0,0)\}$, and that $(a,b) \in \gamma^{-1}\mathcal{O}_{\cubes}$ if and only if $(a,-b) \in \gamma^{-2}\mathcal{O}_{\cubes}$. Therefore, we obtain that $c_\gamma(k)=\overline{c_{\gamma^2}(k)}$ and $c(k) + c_\gamma(k) + c_{\gamma^2}(k) = 0$ for every $k \in \F_p^\times$. This implies that $c(k)$ is real (this can also be seen directly from the definition of $\mathcal{O}_{\cubes}$) and that $\Re\{c_\gamma(k)\} = -\frac{1}{2}c(k)$ for every $k \not\equiv 0 \pmod p$. \\
	Using that $q \in \mathbb{R}$ by Theorem \ref{qisreal} and that $\log|z|=\Re\{\log z\}$ for every $z \in \C^\times$, we have
	\begin{align*}
		\log |R_\gamma(\tau)| &= \Re\{\log R_\gamma(\tau)\} = -6\sum_{n \not\equiv 0 (p)} \sum_{k=1}^\infty \frac{q^{\left(\frac{nk}{p}\right)}}{k} \cdot \Re\{c_{\gamma}(kn)\} \\
		&= 3\sum_{n,k \not\equiv 0 (p)} \frac{q^{\left(\frac{nk}{p}\right)}}{k} \cdot c(kn) - 2(p+1) \sum_{n \not\equiv 0(p)} \sum_{k \equiv 0(p)} \frac{q^{\left(\frac{nk}{p}\right)}}{k} \\
		&= 3\sum_{n,k \not\equiv 0 (p)} \frac{q^{\left(\frac{nk}{p}\right)}}{k} \cdot c(kn) - \frac{2(p+1)}{p} \sum_{n \not\equiv 0(p)} \sum_{k=1}^\infty \frac{q^{nk}}{k}.
	\end{align*}
	On the other hand, similarly to the proof of Proposition \ref{logR} we have
	\begin{align*}
		\log R(\tau) = 6\sum_{n \not\equiv 0(p)} \sum_{k=1}^\infty \frac{q^{kn}}{k} - 6\sum_{n \not\equiv 0 (p)} \sum_{k=1}^\infty \frac{q^{\left(\frac{nk}{p}\right)}}{k} \cdot c(kn).
	\end{align*}
	By isolating the terms containing $c(0)$, we obtain
	\begin{align*}
		\log R(\tau) &= 6\sum_{n \not\equiv 0(p)} \sum_{k=1}^\infty \frac{q^{kn}}{k} - 6\sum_{n,k \not\equiv 0 (p)} \frac{q^{\left(\frac{nk}{p}\right)}}{k} \cdot c(kn) - \frac{2(p-2)}{p} \sum_{n \not\equiv 0 (p)} \sum_{k=1}^\infty \frac{q^{nk}}{k} \\
		&= - 6\sum_{n,k \not\equiv 0 (p)} \frac{q^{\left(\frac{nk}{p}\right)}}{k} \cdot c(kn) + \frac{4(p+1)}{p} \sum_{n \not\equiv 0 (p)} \sum_{k=1}^\infty \frac{q^{nk}}{k},
	\end{align*}
	concluding the proof.
\end{proof}

We are now ready to prove Proposition \ref{logq<p}.

\begin{proof}[Proof of Proposition \ref{logq<p}]
By Remark \ref{rmk: qinfunddom} it suffices to prove the statement for $\tau \in \mathcal{F} + n$, where $n \in \Z$ and $\mathcal{F}$ is the standard fundamental domain, and such that $\tau$ does not lie on the lower boundary. Suppose first that $P$ is close to a cusp lying in the Galois orbit corresponding to $\mathcal{O}_{\cubes}$ (i.e., the case in which $\gamma=\operatorname{Id}$). Evaluating $U$ in $\tau = \tau_P$ we obtain $\operatorname{Ord}_q(U) \log |q| = -\log |R(\tau)| - 3 \log p + \log |U(\tau)|$, and the triangle inequality yields $|\operatorname{Ord}_q(U) \log |q|| \leq |\log |R(\tau)|| + |- 3 \log p + \log |U(\tau)||$. By Corollary \ref{Uint} we have $0 \leq \log |U(\tau)| \leq 3 \log p$, hence $|\log |U(\tau)| - 3 \log(p)| \le 3\log p$. Combining this with Proposition \ref{logR} and Lemma \ref{order} we finally obtain
$$\frac{p^2-1}{6p}|\log|q|| \le 3 \log p + \frac{4\pi^2 p\sqrt{p}}{3|\log|q||}.$$

Suppose instead that $P$ is close to a cusp lying in the other Galois orbit (i.e., the case in which $\gamma \ne \operatorname{Id}$). Evaluating $U \circ \gamma$ in $\tau=\gamma^{-1}\tau_P$ (lying in $\mathcal{F}$ or in $\mathcal{F} + \frac{p-1}{2}$ depending on the sign of $q$, as above) and proceeding in the same way (using Lemma \ref{ordergamma} and Proposition \ref{logRgamma} instead of Lemma \ref{order} and Proposition \ref{logR} respectively), we obtain the inequality
$$\frac{p^2-1}{12p}|\log|q|| \le 3 \log p + \frac{2\pi^2 p \sqrt{p}}{3|\log|q||}.$$
We now set $x=|\log|q||$. So far we have obtained, respectively for $\gamma=\operatorname{Id}$ and $\gamma \ne \operatorname{Id}$,
\begin{gather}
	\frac{p^2-1}{6p}x^2 - 3 x \log p - \frac{4\pi^2 p\sqrt{p}}{3} \le 0 \label{ineq1} \\
	\frac{p^2-1}{12p}x^2 - 3x \log p - \frac{2\pi^2 p\sqrt{p}}{3} \le 0. \label{ineq2}
\end{gather}
The first inequality implies the second, so (independently of whether $\gamma = \operatorname{Id}$ or not) we get that $x$ satisfies \eqref{ineq2}, and therefore
\begin{gather*}
	x \le \frac{18p \log p}{p^2-1} + \sqrt{\frac{18^2 p^2 (\log p)^2}{(p^2-1)^2} + \frac{8\pi^2 p^2 \sqrt{p}}{p^2-1}}.
\end{gather*}
Using that $\sqrt{a^2+b^2} \le a+b$ for all non-negative real numbers $a, b$, we obtain
\begin{align*}
	x &\le f(p) := \frac{36 p\log p}{p^2-1} + \frac{2\sqrt{2} \, \pi p \sqrt[4]{p}}{\sqrt{p^2-1}}.
\end{align*}
Since $f(t) < 30$ for all $t \in [2,100]$, we can assume $p > 100$, in which case we get
\begin{align*}
	x &\le f(p) \le \, \frac{36 \cdot 101 \cdot \log101}{100 \cdot 102} + \frac{2\sqrt{2} \, \pi \cdot 101}{10 \sqrt{102}} \cdot \sqrt[4]{p} \ \le \ \frac{2\sqrt{2} \, \pi \cdot 101}{10 \sqrt{102}} \cdot \sqrt[4]{p} + 1.65,
\end{align*}
which concludes the proof.\end{proof}

\begin{remark}\label{rmk: wlog p greater 100}
The proof of Proposition \ref{logq<p} shows
that if $E$ is a non-CM elliptic curve and $5 < p < 100$ is a prime such that
$\operatorname{Im} \rho_{E,p}$ is conjugate to $G(p)$, then $x=|\log |q|| \leq f(p) < 30$. In particular, this implies that every time that we assume $|\log |q|| \ge 30$ we may also assume $p>100$.
\end{remark}

\begin{corollary}\label{p<\boundonprimes}
	Let $\faktor{E}{\Q}$ be an elliptic curve without complex multiplication and set $q=e^{2\pi i \tau}$, where $\tau \in \uhp$ corresponds to the complex elliptic curve $E(\mathbb{C})$. Let $p>5$ be a prime number such that $\operatorname{Im} \rho_{E,p}$ is conjugate to $G(p)$. If $|\log|q|| \ge 30$, then $p<\boundonprimes$.
\end{corollary}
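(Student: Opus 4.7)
The plan is to combine three ingredients already established in the paper: the effective surjectivity theorem (Theorem \ref{effiso}), the bound relating the Faltings height to $\log|q|$ for elliptic curves with integral $j$-invariant (Theorem \ref{heights-ineq}(2)), and the preliminary bound $|\log|q|| \le Cp^{1/4} + 1.65$ obtained in Proposition \ref{logq<p}. Since $p>37$ and $\operatorname{Im}\rho_{E,p} \cong G(p)$, Theorem \ref{intejer} yields $j(E) \in \Z$, so all three inputs are available. We argue by contradiction, assuming $p \ge \boundonprimes$, and derive an estimate incompatible with this bound.

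First, since $G(p) \subset C_{ns}^+(p)$, the refined version of Theorem \ref{effiso}(2) applies under the hypothesis $|\log|q|| \ge 30$, giving
\[
p < 2530 \Bigl( \Fheight(E) + 2\log p + \tfrac{3}{2}\log(\Im\{\tau\}) + 1.94 \Bigr).
\]
Writing $x = |\log|q||$, we have $\Im\{\tau\} = x/(2\pi)$, and Theorem \ref{heights-ineq}(2) gives
\[
\Fheight(E) < \tfrac{x}{12} - \tfrac{1}{2}\log x - \tfrac{1}{2}\log 2 + \tfrac{\pi^2}{3x}.
\]
Substituting both into the surjectivity bound and bundling the numerical constants (using $x \ge 30$ to control the tail term $\pi^2/(3x)$) reduces the estimate to an inequality of the shape
\[
p < 2530 \Bigl( \tfrac{x}{12} + \log x + 2\log p + c_0 \Bigr)
\]
for some explicit constant $c_0$ of modest absolute value.

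Next, Proposition \ref{logq<p} applies (since $p > 100$, otherwise the corollary is trivial) and yields $x \le C p^{1/4} + 1.65$ with $C = \frac{2\sqrt{2}\,\pi \cdot 101}{10\sqrt{102}}$. Substituting this into the previous inequality produces a bound of the form
\[
p < 2530 \Bigl( \tfrac{C}{12} p^{1/4} + \tfrac{9}{4}\log p + c_1 \Bigr),
\]
where $c_1$ collects all lower-order constants. Since the right-hand side grows only like $p^{1/4}$ (up to logarithmic terms), this inequality fails for all sufficiently large $p$. The conclusion $p < \boundonprimes$ then follows by checking numerically that the inequality is violated at $p = \boundonprimes$: a direct evaluation with $p^{1/4} \approx 17.90$ and $\log p \approx 11.54$ shows that the right-hand side is strictly less than $\boundonprimes$, giving the desired contradiction.

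The conceptual work is already done in the three cited results; the only obstacle is the arithmetic bookkeeping needed to verify that the constants propagate cleanly and that the threshold $\boundonprimes$ is indeed crossed at the expected point. Since the dominant term on the right is $\frac{2530 C}{12}\, p^{1/4}$, the natural scale at which the inequality becomes sharp is exactly $p \sim 10^5$, which explains why $\boundonprimes$ is the appropriate cutoff rather than something substantially smaller or larger.
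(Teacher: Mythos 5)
Your proposal is correct and follows essentially the same route as the paper: combine Theorem \ref{effiso}(2) (refined case $|\log|q||\ge 30$) with the height comparison of Theorem \ref{heights-ineq}(2) and the bound $|\log|q|| \ll p^{1/4}$ of Proposition \ref{logq<p}, then solve the resulting inequality in $p$ numerically. The only cosmetic difference is that the paper obtains $p>100$ from Theorems \ref{lefournlemos} and \ref{bilucartan} and $p\equiv 2\pmod 3$ from Theorem \ref{zywina} before invoking Proposition \ref{logq<p} (the latter congruence, which Proposition \ref{logq<p} requires, is automatic from $\operatorname{Im}\rho_{E,p}\cong G(p)$ and should be mentioned), whereas you dismiss $p\le 100$ as trivial; both are fine.
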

\begin{proof}
    By Theorem \ref{zywina} we can assume that $p \equiv 2 \pmod 3$. The statement then follows from Theorem \ref{effiso} (2), bounding $\Fheight(E)$ by Theorem \ref{heights-ineq} (2) and $|\log|q||$ by Proposition \ref{logq<p}. Note that the quantity $\Im\{\tau\}$, which appears in Theorem \ref{effiso} (2), is equal to $\frac{|\log |q||}{2\pi}$. We obtain an explicit inequality in $p$, which can be numerically solved.
\end{proof}

The upper bound on $p$ given by Corollary \ref{p<\boundonprimes} is sharper than the corresponding bound in Theorem \ref{lefournlemos}, but it is still not good enough to test all the remaining primes by the direct computation we describe in Section \ref{sec:Conclusion}. For this reason, in the next section we improve on Proposition \ref{logq<p}. The intermediate result we just obtained will be an important ingredient in this improvement.

\subsection{Abel summation and a sharper bound on $\log|q|$}

Proposition \ref{logq<p} improves the bound on $\log R$ given in \cite{lefournlemos21} by considering cancellation among roots of unity. In particular, the argument in \cite{lefournlemos21} used the trivial estimate $|c(k)| \le \frac{p-2}{3}$, which we replaced by $|c(k)| \le \frac{4}{3}\sqrt{p}$ using Lemma \ref{kloosterman}. In this section, we show that by rearranging the sums in $\log R$ using partial summation, we obtain an expression for $\log R$ which gives even more cancellations. This leads to a further improvement of the bound of Proposition \ref{logR} and ultimately to the following result, which supersedes Proposition \ref{logq<p}.

\begin{proposition}\label{logq<30}
	Let $\faktor{E}{\Q}$ be an elliptic curve without complex multiplication and set $q=e^{2\pi i \tau}$, where $\tau \in \uhp$ corresponds to the complex elliptic curve $E(\mathbb{C})$. If $p>5$ is a prime number such that $\operatorname{Im} \rho_{E,p}$ is conjugate to $G(p)$, then $|\log|q|| < 39$.
\end{proposition}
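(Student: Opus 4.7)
The plan splits into an analytic refinement and a numerical verification. If $|\log|q|| < 30$ the conclusion is immediate, so I may assume $|\log|q|| \ge 30$. Corollary \ref{p<\boundonprimes} then forces $p < \boundonprimes$, and combining with Theorem \ref{-1mod9} and Theorem \ref{bilucartan} I reduce to the finitely many primes $p \equiv 2, 5 \pmod 9$ in the range $(100, \boundonprimes)$.

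The analytic refinement is to rewrite the inner sum
$$T(n) := \sum_{k=1}^\infty \frac{q^{nk/p}}{k}\, c(kn)$$
appearing in the expression for $\log R$ (and the analogous sum in $\log R_\gamma$, via Proposition \ref{logRgamma}) using Abel summation. Setting $S_N(n) := \sum_{k=1}^N c(kn)$, partial summation reexpresses $T(n)$ as a weighted sum of the $S_k(n)$, with weights that decay rapidly in $k$. Proposition \ref{logR} used the pointwise bound $|c(m)| \le \tfrac{4}{3}\sqrt{p}$ from Lemma \ref{kloosterman}, but the partial sums $S_N(n)$ exhibit much stronger cancellation, since $c(m)$ averages to a much smaller quantity over $m$; Abel summation is precisely the device that exposes this compounded cancellation. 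Crucially, $S_N(n)$ depends only on $p$ and on the residue class of $N, n$ modulo $p$, and is independent of the hypothetical elliptic curve $E$, so it can be computed explicitly.

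The numerical step is to establish, for each admissible prime, a uniform bound of the form $\max_{n,N} |S_N(n)| \le \sigma(p)$ with $\sigma(p)$ much smaller than the trivial bound $\tfrac{4}{3} N \sqrt{p}$ that the pointwise estimate would yield; the heuristic of Remark \ref{rmk: heuristics for the bound s<p} predicts that $\sigma(p)$ should behave roughly like $\sqrt p$, and in any event $\sigma(p) < p$ suffices for our purposes and is verified directly. Feeding $\sigma(p)$ into the Abel-summed expression for $T(n)$, and thence into the formulas for $\log R$ and $\log R_\gamma$, I obtain a sharpened version of Proposition \ref{logR} (respectively Proposition \ref{prop:lavera_logRgamma}) of the form $|\log R| \le F(p,|\log|q||)$. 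Substituting in the identity $\operatorname{Ord}_q(U)\log|q| = \log|U| - 3\log p - \log|R|$ from equation \eqref{eq: use U to estimate log |q|}, using Corollary \ref{Uint} together with Lemma \ref{order} and Lemma \ref{ordergamma}, yields a quadratic inequality in $x = |\log|q||$ which, evaluated on every admissible $p$, resolves to $x < 39$.

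The principal obstacle is computational rather than conceptual: a uniform analytic estimate on $S_N(n)$ strong enough to prove the bound $|\log|q|| < 39$ does not seem to be accessible, and one must verify the bound $\sigma(p)$ numerically for each prime. This is feasible only because Corollary \ref{p<\boundonprimes} has already shrunk the relevant prime range from $1.4 \cdot 10^7$ down to roughly $10^5$, and because the congruence condition $p \equiv 2, 5 \pmod 9$ cuts the count further by a factor of three; at $O(p \log p)$ operations per prime, the total calculation is then tractable. A secondary point is the need to handle both the case $\gamma = \operatorname{Id}$ (which uses Lemma \ref{order} and Proposition \ref{logR}) and the case $\gamma \ne \operatorname{Id}$ (which uses Lemma \ref{ordergamma} and Proposition \ref{logRgamma}); the weaker of the two inequalities produced dictates the final numerical constant.
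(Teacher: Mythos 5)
Your high-level plan coincides with the paper's: assume $x := |\log|q|| \ge 30$, use Corollary \ref{p<\boundonprimes} and Theorem \ref{-1mod9} to restrict to primes $p \equiv 2,5 \pmod 9$ with $100 < p < \boundonprimes$, expose cancellation among the coefficients $c(m)$ by partial summation, verify the resulting partial-sum bound numerically for each admissible prime, and treat the two cusp orbits via Lemmas \ref{order} and \ref{ordergamma}. The gap is in the implementation: you perform Abel summation in the wrong variable, and this destroys the quantitative gain that the whole argument hinges on.

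You sum $T(n) = \sum_k \frac{q^{nk/p}}{k}c(kn)$ by parts in $k$ with $n$ fixed, against $S_N(n) = \sum_{k\le N}c(kn)$. The first Abel weight is $q^{n/p} - \tfrac12 q^{2n/p}$, of modulus at least $\tfrac12|q|^{n/p}$ --- it is \emph{not} small. Moreover $\sigma(p) := \max_{N,n}|S_N(n)| \ge \max_n |S_1(n)| = \max_n|c(n)|$, which is of order $\sqrt p$ (the Weil-type bound of Lemma \ref{kloosterman} is attained up to constants, e.g.\ by Parseval). Hence the best this arrangement can give is $|T(n)| \lesssim \sigma(p)\,|q|^{n/p}$ and $|\log R| \lesssim \sigma(p)\cdot p/x \gtrsim p^{3/2}/x$: the same order as Proposition \ref{logR}, with no improvement. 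Feeding $O(p^{3/2}/x)$ into $\frac{p^2-1}{12p}x \le 3\log p + |\log|R_\gamma||$ yields only $x \ll p^{1/4}$, i.e.\ $x \lesssim 150$ for $p$ near $\boundonprimes$, not $x<39$; and your fallback claim that $\sigma(p) < p$ would suffice is quantitatively far off (it gives only $x \ll \sqrt p$). The paper instead first collects the double sum by $m = nk$, writing $S = -6\sum_{m\not\equiv 0\,(p)} q^{m/p}\,c(m)\sum_{k\mid m}k^{-1}$, and only then sums by parts \emph{over $m$}, the variable indexing the powers of $q^{1/p}$. This extracts the uniformly small factor $q^{s/p}-q^{(s+1)/p}=q^{s/p}(1-q^{1/p})$, of size about $x/p$, in front of every term; the quantity computed numerically is $D(s)=\sum_{m\le s}c(m)\sum_{k\mid m}k^{-1}$, bounded by $4.25\sqrt{ps}$ for $s<p$ (Lemma \ref{bound D for s less than p}), and it is the combination of the $x/p$ factor with the $\sqrt{s}$ growth of $D(s)$ that upgrades $O(p^{3/2}/x)$ to $O(p/\sqrt x)$ and hence gives the absolute inequality $x\sqrt x - 2\sqrt x - 230 \le 0$, i.e.\ $x<39$. (A minor point: for the orbit with $\gamma \ne \operatorname{Id}$ the paper computes a separate constant $C_\gamma=2.81$ rather than routing through Proposition \ref{logRgamma}; your route via that proposition is legitimate once the main issue is repaired, since $q$ is real and $|j|>1728$ in the relevant range.)
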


To prove this result, we can and do assume that $|\log|q|| \ge 30$ and hence (by Remark \ref{rmk: wlog p greater 100}) that $p > 100$. By Corollary \ref{p<\boundonprimes} and Theorem \ref{-1mod9} we can also assume that $p$ is less than $\boundonprimes$ and that it satisfies $p \equiv 2,5 \pmod 9$.
We keep the notation from the previous section. Similarly to the proof of Proposition \ref{logRgamma} we have
\begin{align*}
	\log R = - 6\sum_{n,k \not\equiv 0 (p)} \frac{q^{\frac{kn}{p}}}{k} \cdot c(kn) + \frac{4(p+1)}{p} \sum_{n \not\equiv 0 (p)} \sum_{k=1}^\infty \frac{q^{nk}}{k}.
\end{align*}
Writing $m=kn$ we have
\begin{equation}\label{logR=S+cosa}
	\log R = - 6\sum_{m \not\equiv 0 (p)} \sum_{k|m} \frac{q^{\frac{m}{p}}}{k} \cdot c(m) + \frac{4(p+1)}{p}\sum_{n \not\equiv 0(p)} \sum_{k=1}^\infty \frac{q^{kn}}{k}.
\end{equation}
Using Lemma \ref{logsum}, we bound the second term as follows:
\begin{equation}\label{eq: bound for the small subsum in log R}
\left|\frac{4(p+1)}{p}\sum_{n \not\equiv 0(p)} \sum_{k=1}^\infty \frac{q^{kn}}{k}\right| < \frac{4(p+1)}{p}\sum_{n=1}^\infty \sum_{k=1}^\infty \frac{|q|^{kn}}{k} < \frac{4(p+1)\pi^2}{6p|\log|q||}.
\end{equation}
This quantity is bounded uniformly in $p$. We now focus on the first sum in equation \eqref{logR=S+cosa}, which we denote by $S$. By partial summation, we have
\begin{align}\label{S-Abel}
	\begin{split}
		S &= - 6\sum_{m \not\equiv 0 (p)} \sum_{k|m} \frac{q^{\frac{m}{p}}}{k} \cdot c(m) = - 6\sum_{m \not\equiv 0 (p)} q^{\frac{m}{p}} \sum_{k|m} \frac{c(m)}{k} \\
		&= - 6\sum_{s=1}^\infty (q^\frac{s}{p}-q^\frac{s+1}{p}) D(s),
	\end{split}
\end{align}
where
\begin{equation}\label{eqD}
	D(s):= \sum_{\substack{m \le s \\ m \not\equiv 0(p)}} \sum_{k|m} \frac{c(m)}{k}.
\end{equation}
The idea of using this rewriting of $S$ is that, when $p$ is large, the factor $q^{\frac{s}{p}}-q^{\frac{s+1}{p}}=q^{\frac{s}{p}}(1-q^{\frac{1}{p}})$ becomes small, because $|q^{\frac{1}{p}} -1| = |e^{\frac{\log q}{p}}-1| \approx \frac{|\log |q||}{p}$. Provided that $D(s)$ does not grow too quickly, the factor of $p$ in the denominator leads to a much better upper bound on $S$, hence on $\log R$, than that provided by Proposition \ref{logR}. We now give two different estimates for $|D(s)|$, one for $s<p$ and one for $s \ge p$, in Lemmas \ref{bound D for s less than p} and \ref{bound D for s>p} respectively.

Consider all the primes $p$ smaller than a fixed bound $M$. For $s<p$ we have
\begin{equation*}
	D(s)=\sum_{m\le s} \sum_{k|m} \frac{c(m)}{k},
\end{equation*}
and we can write $|D(s)| \le C\sqrt{p}\sqrt{s}$ for some $C=C(M)$.

\begin{lemma}\label{bound D for s less than p}
	Let $M=\boundonprimes$ and $C=4.25$. We have $|D(s)| \le C\sqrt{p}\sqrt{s}$ for all $s<p<M$ with $p$ prime, $p \equiv 2, 5 \pmod{9}$.
\end{lemma}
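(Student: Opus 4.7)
The strategy is to verify this bound by a direct numerical computation, exploiting the fundamental observation that $D(s)$ depends only on the prime $p$ and not on the elliptic curve $E$ nor on $\tau$. Since $s<p$, the condition $m \not\equiv 0 \pmod{p}$ is automatic for every $m \le s$, so we may rewrite
\[
D(s) = \sum_{m=1}^{s} c(m) \sum_{k \mid m} \frac{1}{k} = \sum_{m=1}^{s} c(m)\, \frac{\sigma(m)}{m},
\]
where $\sigma(m)$ denotes the sum of positive divisors of $m$. The values $c(m)$ depend only on $p$ through the cube structure of $\F_{p^2}^\times$, and the admissible primes form a finite, explicit set: those with $p < \boundonprimes$ satisfying $p \equiv 2, 5 \pmod{9}$. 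Thus, for each such $p$, it suffices to compute $D(1), \dots, D(p-1)$ and check that $|D(s)| \le 4.25\sqrt{ps}$ throughout.

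Before running the computation, one should confirm that the asserted bound is of the correct order of magnitude. By Lemma \ref{kloosterman} we have $|c(m)| \le \frac{4}{3}\sqrt{p}$, and heuristically one expects the complex numbers $c(m)$ to behave like a quasi-random sequence on a disk of radius $\frac{4}{3}\sqrt{p}$, exhibiting square-root cancellation when averaged against a well-behaved arithmetic weight such as $\sigma(m)/m$. Under this assumption, the partial sums $\sum_{m \le s} c(m)\, \sigma(m)/m$ mimic a random walk of step size $O(\sqrt{p})$ and length $s$, so their modulus is expected to be of order $\sqrt{ps}$ up to at most a logarithmic factor. In the range $s<p<\boundonprimes$ this strongly suggests that a small absolute constant such as $4.25$ will suffice; a refined version of this heuristic forms the content of Remark \ref{rmk: heuristics for the bound s<p}.

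The numerical check is feasible thanks to the parametric formula for $c(s)$ derived in the proof of Lemma \ref{kloosterman}: for each prime $p$, one first tabulates the fibers of the map $t \mapsto \varphi(t) = (3t + \varepsilon t^3)/(1 + 3\varepsilon t^2)$ on $\F_p$, which reduces the computation of the vector $(c(s))_{s=1,\dots,p-1}$ to a single discrete Fourier transform and can therefore be performed in time essentially linear in $p$. The partial sums $D(s)$ are then accumulated in a single pass, and the quantity $|D(s)|/\sqrt{ps}$ is monitored; summing the cost over the relevant primes yields a computation that comfortably fits within the time budget mentioned in the introduction. The main obstacle is therefore not analytic but algorithmic: without the prior reductions to $p \equiv 2, 5 \pmod{9}$ (Theorem \ref{-1mod9}) and $p < \boundonprimes$ (Corollary \ref{p<\boundonprimes}), a brute-force sweep would be computationally prohibitive, but with these reductions in hand the verification is routine.
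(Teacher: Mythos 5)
Your proposal is correct and follows essentially the same route as the paper: the lemma is established by a direct finite computation over all primes $p<\boundonprimes$ with $p\equiv 2,5\pmod 9$, obtaining the vector $(c(m))_{m}$ for each $p$ via a single length-$p$ discrete Fourier transform (the paper uses Rader's FFT applied to the indicator of $F(1)$, which is equivalent to your tabulation of the fibers of $\varphi$) and then accumulating the partial sums $D(s)$ in one pass. Your rewriting $D(s)=\sum_{m\le s}c(m)\sigma(m)/m$ for $s<p$ and the random-walk heuristic match the paper's Remark \ref{rmk: heuristics for the bound s<p}.
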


\begin{proof}
	We get a suitable value of $C$ by explicitly computing the values of $D(s)$ for all primes $p \equiv 2,5 \pmod 9$ up to $M$ and for $s=1, \dots, p-1$. More precisely, in order to quickly compute $D(s)$ we obtain the values of the coefficients $c(m)$ using Rader's FFT algorithm \cite{rader68} applied to the characteristic function of the set $F(m)$. Indeed, every $c(m)$ is defined as the (non-normalised) Fourier transform of the characteristic function $\mathbbm{1}_{F(m)}$ of the set $F(m)$. Computing the fast Fourier transform is the most expensive step of the algorithm, taking time $O(p \log p)$. Since there are $O\left(\frac{M}{\log M}\right)$ primes up to $M$ (this remains true also restricting to the congruence classes $2, 5 \bmod 9$), the asymptotic complexity of the algorithm is $O(M^2)$. For $M=1.03 \cdot 10^5$, the run time of our implementation \cite{Script} is of a few hours on modest hardware.
\end{proof}

\begin{remark}
    It is important to notice that the value of $D(s)$ depends on the choice of $\varepsilon$ (see equation \eqref{eq:cartan}). All the calculations in this section, including in particular that of Lemma \ref{bound D for s less than p}, are performed by taking as $\varepsilon$ the image in $\mathbb{F}_p$ of the least positive integer which is a quadratic non-residue modulo $p$.
    \end{remark}

\begin{remark}
	For the computationally accessible values of $M$, one can check that the optimal $C(M)$ grows very slowly: for example, $C(10^4) \approx 3.789$, $C(10^5) \approx 4.246$ and $C(10^6) \approx 5.169$.
\end{remark}

\begin{remark}\label{rmk: heuristics for the bound s<p}
	The choice of the form of the bound in Lemma \ref{bound D for s less than p} is supported by the following heuristics. We assume that the coefficients $c(m)$ are pseudo-random values in the interval $\left[-\frac{4}{3}\sqrt{p}, \frac{4}{3}\sqrt{p}\right]$. Since $\sigma_{-1}(m) = \sum_{k|m} \frac{1}{k} = O(\log \log m)$, the quantity $D(s)$ is the sum of $s$ random values in the interval $\left[-\alpha\sqrt{p} \log\log s, \alpha\sqrt{p} \log\log s \right]$ for some constant $\alpha$, so we expect it to be $O(\sqrt{ps} (\log\log s)^2)$. By taking small values of $p$ (for example, $p<\boundonprimes$) and $s<p$, we can essentially treat $(\log\log s)^2$ as a constant.
\end{remark}

In the regime $s \ge p$, it will be enough to use the following easier upper bound on $D(s)$:
\begin{lemma}\label{bound D for s>p}
	Let $p$ be a prime and let $s\ge p$ be an integer. We have $|D(s)| < \frac{2\pi^2}{9}s\sqrt{p}$.
\end{lemma}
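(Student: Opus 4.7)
The plan is to apply the triangle inequality directly to the defining sum of $D(s)$, using the Kloosterman-type bound $|c(m)| \le \frac{4}{3}\sqrt{p}$ from Lemma \ref{kloosterman} uniformly on every coefficient, and then to control the resulting sum of $\sigma_{-1}$ by a standard interchange-of-summation argument. Unlike Lemma \ref{bound D for s less than p}, in this regime there is no $\sqrt{s}$-saving to be expected heuristically: we are simply summing $s$ terms, each of size $O(\sqrt{p})$ (up to the divisor function), so the bound is essentially best possible from a pointwise estimate, and a purely analytic argument will suffice.

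More concretely, I would first rewrite
\[
D(s) \;=\; \sum_{\substack{m \le s \\ m \not\equiv 0 \, (p)}} c(m) \, \sigma_{-1}(m), \qquad \text{where } \sigma_{-1}(m) := \sum_{k \mid m} \frac{1}{k}.
\]
Applying the triangle inequality together with Lemma \ref{kloosterman} gives
\[
|D(s)| \;\le\; \frac{4}{3}\sqrt{p} \sum_{m \le s} \sigma_{-1}(m).
\]
Swapping the order of summation yields
\[
\sum_{m \le s} \sigma_{-1}(m) \;=\; \sum_{k \le s} \frac{1}{k} \left\lfloor \frac{s}{k} \right\rfloor \;\le\; s \sum_{k=1}^{s} \frac{1}{k^{2}} \;<\; s \cdot \frac{\pi^{2}}{6},
\]
the strict inequality coming from the fact that we only take the partial sum up to $k=s$. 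Multiplying through produces exactly $|D(s)| < \frac{4}{3}\sqrt{p} \cdot \frac{\pi^{2} s}{6} = \frac{2\pi^{2}}{9} s \sqrt{p}$, as claimed.

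There is essentially no obstacle: all the content is in Lemma \ref{kloosterman}, and the remaining work is a standard estimate. The reason this crude bound is acceptable is precisely that it will be used only in the tail regime $s \ge p$ of the Abel-summation argument (equation \eqref{S-Abel}), where the factor $q^{s/p}(1-q^{1/p})$ decays rapidly enough that only a linear-in-$s$ control of $D(s)$ is needed; the delicate square-root-type cancellation is handled separately in Lemma \ref{bound D for s less than p} for $s<p$.
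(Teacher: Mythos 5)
Your proposal is correct and coincides with the paper's own proof: both apply the triangle inequality with the bound $|c(m)| \le \frac{4}{3}\sqrt{p}$ from Lemma \ref{kloosterman}, interchange the order of summation to estimate $\sum_{m \le s}\sigma_{-1}(m) \le s\sum_{k\ge 1}k^{-2} < \frac{\pi^2}{6}s$, and multiply. No further comment is needed.
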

\begin{proof}
	It suffices to note that by Lemma \ref{kloosterman} we have
	\begin{align*}
		|D(s)| &\le \sum_{\substack{m\le s \\ m \not\equiv 0(p)}} \sum_{k|m} \frac{|c(m)|}{k} \le \frac{4}{3}\sqrt{p} \sum_{k=1}^s \sum_{\ell=1}^{\lfloor \frac{s}{k} \rfloor} \frac{1}{k} \le \frac{4}{3}\sqrt{p} \sum_{k=1}^s \frac{s}{k^2} < \frac{4\pi^2}{18}s\sqrt{p}.
	\end{align*} \qedhere
\end{proof}

We now combine these results to prove the following.
\begin{proposition}\label{AbellogR-general}
	Let $E, p, q$ be as in Proposition \ref{logq<p} and let $R$ be as in Definition \ref{def: Ra1a2 and R}. Let $D(s)$ be as in equation \eqref{eqD} and let $C$ be the minimum constant such that $|D(s)| \le C\sqrt{ps}$ for $s<p$. Writing $x:=|\log|q|| \ge 30$, we have
	\begin{align*}
		|\log|R|| &\le \frac{3Cp\sqrt{\pi}}{x^{\frac{1}{2}}} \cdot 1.28 + \frac{5}{3}\pi^2e^{-x}p\sqrt{p} + \frac{2(p+1)\pi^2}{3px}.
	\end{align*}
\end{proposition}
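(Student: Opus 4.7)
The plan is to bound each of the two summands in \eqref{logR=S+cosa} separately. The second summand is already controlled by the estimate \eqref{eq: bound for the small subsum in log R}, giving the last term $\frac{2(p+1)\pi^2}{3px}$ of the desired inequality.

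The main work concerns the first summand $S$. By the Abel summation identity \eqref{S-Abel} we have $S = -6\sum_{s=1}^\infty (q^{s/p}-q^{(s+1)/p})D(s)$, and since $q^{s/p}-q^{(s+1)/p} = q^{s/p}(1-q^{1/p})$, passing to absolute values yields $|S| \le 6|1-q^{1/p}|\sum_{s=1}^\infty |q|^{s/p}|D(s)|$. I would split the inner sum at $s=p$, applying Lemma \ref{bound D for s less than p} (which gives $|D(s)| \le C\sqrt{ps}$) for $s<p$ and Lemma \ref{bound D for s>p} (which gives $|D(s)| \le \frac{2\pi^2}{9}s\sqrt{p}$) for $s \ge p$. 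This produces the two main terms in the target bound.

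For $s<p$, the resulting sum $C\sqrt{p}\sum_{s=1}^{p-1}|q|^{s/p}\sqrt{s}$ can be compared with a Laplace integral. Setting $\alpha = x/p$, the function $s \mapsto e^{-\alpha s}\sqrt{s}$ is unimodal with maximum at $s_0 = 1/(2\alpha)$, so $\sum_{s=1}^\infty e^{-\alpha s}\sqrt{s} \le \int_0^\infty e^{-\alpha s}\sqrt{s}\,ds + e^{-1/2}\sqrt{s_0} = \frac{\sqrt{\pi}}{2}\alpha^{-3/2} + (2e\alpha)^{-1/2}$. Substituting $\alpha = x/p$, multiplying by $6|1-q^{1/p}|C\sqrt{p}$, and using Lemmas \ref{lemminotecnico1} and \ref{lemminotecnico2} to estimate $|1-q^{1/p}| < (x+\pi)/p$, one obtains a bound of the form $\frac{3C\sqrt{\pi}\,p}{\sqrt{x}}$ times a correction factor which, thanks to the hypothesis $x\ge 30$ and the assumption $p>100$ inherited from Proposition \ref{logq<p}, is at most $1.28$. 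For $s\ge p$ I would write $u=|q|^{1/p}$ and use the closed-form identity $\sum_{s=p}^\infty s u^s = \frac{p u^p}{1-u} + \frac{u^{p+1}}{(1-u)^2}$, together with $u^p = e^{-x}$, Lemma \ref{lemminotecnico2} in the form $\frac{u}{1-u}<p/x$, and the consequence $\frac{1}{1-u}<1+p/x$; multiplied by $6|1-q^{1/p}| \cdot \frac{2\pi^2}{9}\sqrt{p}$ and simplified using $x\ge 30$, the contribution is bounded by $\frac{5}{3}\pi^2 e^{-x}p\sqrt{p}$.

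The main obstacle is purely quantitative rather than conceptual: the numerical constants have to be tracked carefully through several successive estimates in order to arrive at exactly $1.28$ and $\frac{5}{3}$ in the final bound rather than a messier expression. All the underlying analytic ingredients — Abel summation, integral comparison for a unimodal sequence, explicit geometric-type summation, and the two roots-of-unity estimates of Lemmas \ref{lemminotecnico1} and \ref{lemminotecnico2} — are already available from earlier in the paper.
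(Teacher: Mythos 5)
Your proposal follows the paper's own proof essentially step for step: the same decomposition of $\log R$ via \eqref{logR=S+cosa}, the same split of the Abel-summed series $S$ at $s=p$ using Lemmas \ref{bound D for s less than p} and \ref{bound D for s>p}, the same unimodal integral comparison yielding $\frac{\sqrt{\pi}}{2}\alpha^{-3/2}+(2e\alpha)^{-1/2}$, and the same closed-form geometric-derivative identity combined with Lemmas \ref{lemminotecnico1} and \ref{lemminotecnico2} for the tail. The constants $1.28$ and $\frac{5}{3}$ do come out exactly as you predict once $x\ge 30$, $p>100$ and the bound $\frac{x}{p}<0.3$ from Proposition \ref{logq<p} are inserted, so the plan is correct and matches the paper's argument.
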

\begin{proof}
Since $2\pi \Im\tau = |\log|q|| \ge 30$, we know that $\Im\tau>1$, and hence we may also assume that $\tau$ is in the standard fundamental domain $\mathcal{F}$ for the action of $\operatorname{SL}_2(\Z)$, since every fundamental domain containing such a $\tau$ is obtained as $\mathcal{F}+n$ for $n \in \Z$, but integer translations do not change the value of $q$.
We start by estimating the sum $S$ defined in equation \eqref{S-Abel}, dividing it into two parts. Using Lemmas \ref{bound D for s less than p} and \ref{bound D for s>p} we obtain
\begin{align}\label{|S|-Abel}
	\begin{split}
		|S| &\le 6 \, |1-q^\frac{1}{p}| \, \sum_{s=1}^\infty |q|^\frac{s}{p} |D(s)| \\
		&\le 6C\sqrt{p} \, |1-q^\frac{1}{p}| \, \sum_{s=1}^{p-1} |q|^\frac{s}{p}\sqrt{s} \ + \ \frac{4}{3}\pi^2\sqrt{p} \, |1-q^\frac{1}{p}| \, \sum_{s=p}^\infty |q|^\frac{s}{p}s.
	\end{split}
\end{align}
We now use the following elementary fact: if $f:\mathbb{R}_{\ge 0} \to \mathbb{R}_{\ge 0}$ is a differentiable function with a single local maximum in $x_0 \in \mathbb{R}_{\ge 0}$, then $$\sum_{n=1}^\infty f(n) < \int_1^\infty f(x) dx + f(x_0).$$
Since $\frac{d}{ds}(|q|^\frac{s}{p}\sqrt{s}) = |q|^\frac{s}{p}\sqrt{s} \left(\log|q|^\frac{1}{p} + \frac{1}{2s}\right)$, the function $|q|^\frac{s}{p}\sqrt{s}$ is increasing for $s \le -\frac{1}{2\log|q|^{1/p}} = \frac{p}{2|\log|q||}$ and decreasing for larger values of $s$. We then have the following estimate:
\begin{align*}
	\sum_{s=1}^{p-1} |q|^\frac{s}{p}\sqrt{s} &< \sum_{s=1}^\infty |q|^\frac{s}{p}\sqrt{s} < \int_1^\infty |q|^\frac{s}{p}\sqrt{s} \, ds + |q|^\frac{1}{2|\log|q||}\sqrt{\frac{p}{2|\log|q||}} \\
	&< \int_0^\infty e^{-\frac{|\log|q||}{p}s}\sqrt{s} \, ds + e^{-\frac{1}{2}}\sqrt{\frac{p}{2|\log|q||}} \\
&= \frac{p\sqrt{p}}{|\log|q||^{\frac{3}{2}}}\frac{\sqrt{\pi}}{2} \left( 1 + \frac{\sqrt{2}|\log|q||}{p\sqrt{\pi e}} \right),
\end{align*}
where in the last equality we have used the classical Gaussian integral $\int_{0}^\infty e^{-t^2} \, dt = \frac{1}{2}\sqrt{\pi}$.
Using Proposition \ref{logq<p}, together with $p>100$ (see Remark \ref{rmk: wlog p greater 100}), we have
$$\frac{|\log|q||}{p} < \frac{2\sqrt{2} \, \pi \cdot 101}{10 \sqrt{102} \, p^{\frac{3}{4}}} + \frac{1.65}{p} \le \frac{2\sqrt{2} \, \pi \cdot \sqrt[4]{101}}{10 \sqrt{102}} + \frac{1.65}{101} < 0.3,$$
and so
$$\sum_{s=1}^{p-1} |q|^\frac{s}{p}\sqrt{s} < \frac{p\sqrt{p}}{|\log|q||^{\frac{3}{2}}}\frac{\sqrt{\pi}}{2} \cdot 1.15.$$
To estimate the sum of the terms with $s \ge p$, we use the following fact.
If $x \in (0,1)$, then
$$\sum_{s=p}^\infty x^ss = x \frac{d}{dx} \sum_{s=p}^\infty x^s = x \frac{d}{dx} \frac{x^p}{1-x} = x^p \left( \frac{p}{1-x} + \frac{x}{(1-x)^2} \right).$$
Putting everything together, equation \eqref{|S|-Abel} yields
\begin{equation*}
		|S| \le 3Cp\sqrt{p} \, |1-q^\frac{1}{p}| \frac{\sqrt{\pi p}}{|\log|q||^{\frac{3}{2}}} \cdot 1.15 + \frac{4}{3}\pi^2|q|\sqrt{p} \, |1-q^\frac{1}{p}| \left( \frac{p}{1-|q|^\frac{1}{p}} + \frac{|q|^\frac{1}{p}}{(1-|q|^\frac{1}{p})^2} \right).
\end{equation*}
Applying Lemmas \ref{lemminotecnico1} and \ref{lemminotecnico2} we obtain
\begin{align*}
	|S| &\le 3Cp\sqrt{p} \left(\frac{|\log|q||}{p} + \frac{\pi}{p}\right) \frac{\sqrt{\pi p}}{|\log|q||^{\frac{3}{2}}} \cdot 1.15 \\
	& \quad + \frac{4}{3}\pi^2|q|\sqrt{p} \left(1-|q|^\frac{1}{p} + |q|^\frac{1}{2p} \frac{\pi}{p}\right) \left( \frac{p}{1-|q|^\frac{1}{p}} + \frac{|q|^\frac{1}{p}}{(1-|q|^\frac{1}{p})^2} \right),
\end{align*}
which we rewrite as
\begin{align*}
	|S| &\le \frac{3Cp\sqrt{\pi}}{|\log|q||^{\frac{1}{2}}} \cdot 1.15 \left(1 + \frac{\pi}{|\log|q||} \right) \\
	& \quad + \frac{4}{3}\pi^2|q|\sqrt{p} \left(1 + \frac{|q|^\frac{1}{2p}}{1-|q|^\frac{1}{p}} \cdot \frac{\pi}{p}\right) \left( p + \frac{|q|^\frac{1}{p}}{1-|q|^\frac{1}{p}} \right).
\end{align*}
Using Lemma \ref{lemminotecnico2} again we have
$$\frac{|q|^\frac{1}{2p}}{1-|q|^\frac{1}{p}} = \frac{|q|^\frac{1}{2p}}{(1-|q|^\frac{1}{2p})(1+|q|^\frac{1}{2p})} < \frac{1}{1+|q|^\frac{1}{2p}} \cdot \frac{2p}{|\log|q||} < \frac{2p}{|\log|q||},$$
and using the assumption $|\log|q|| \ge 30$ we obtain
\begin{align*}
	|S| &< \frac{3Cp\sqrt{\pi}}{|\log|q||^{\frac{1}{2}}} \cdot 1.28 + \frac{4}{3}\pi^2|q|\sqrt{p} \left(1 + \frac{2\pi}{|\log|q||} \right) \left( p + \frac{p}{|\log|q||} \right) \\
	&\le \frac{3Cp\sqrt{\pi}}{|\log|q||^{\frac{1}{2}}} \cdot 1.28 + \frac{4}{3}\pi^2|q|p\sqrt{p} \cdot 1.25.
\end{align*}
    Bounding the sums in equation \eqref{logR=S+cosa} and recalling equation \eqref{eq: bound for the small subsum in log R}, we have obtained
	$$|\log|R|| \le \frac{3Cp\sqrt{\pi}}{x^{\frac{1}{2}}} \cdot 1.28 + \frac{5}{3}\pi^2|q|p\sqrt{p} + \frac{2(p+1)\pi^2}{3px},$$
	which concludes the proof.
\end{proof}

\begin{corollary}\label{AbellogR}
	Let $E, p, q$ be as in Corollary \ref{p<\boundonprimes} and let $R$ be as in Definition \ref{def: Ra1a2 and R}. If $|\log|q|| \ge 30$, then
	\begin{align*}
		|\log|R|| &\le 29 \cdot \frac{p}{|\log|q||^{\frac{1}{2}}} + 0.23.
	\end{align*}
\end{corollary}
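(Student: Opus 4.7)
The plan is to derive the statement as a direct numerical specialisation of Proposition \ref{AbellogR-general}, after checking that the assumptions needed to apply Lemma \ref{bound D for s less than p} (with its explicit constant $C=4.25$) are in force. Under the hypotheses of Corollary \ref{AbellogR}, Corollary \ref{p<\boundonprimes} gives $p<\boundonprimes$ and Theorem \ref{-1mod9} gives $p\equiv 2,5\pmod 9$, so Lemma \ref{bound D for s less than p} supplies the value $C=4.25$ to be plugged into Proposition \ref{AbellogR-general}. Moreover, since $E$ is non-CM with $p>37$ and $\operatorname{Im}\rho_{E,p}\cong G(p)$, Theorems \ref{lefournlemos} and \ref{bilucartan} yield $p\ge 101$, which will be useful for bounding the lower-order terms.

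Next I would simplify each of the three summands in the inequality
\[
|\log|R||\le \frac{3Cp\sqrt{\pi}}{x^{1/2}}\cdot 1.28 + \frac{5}{3}\pi^2 e^{-x} p\sqrt{p} + \frac{2(p+1)\pi^2}{3px}
\]
from Proposition \ref{AbellogR-general} with $x=|\log|q||\ge 30$ and $C=4.25$. For the leading term one checks by direct calculation that $3\cdot 4.25\cdot\sqrt{\pi}\cdot 1.28 < 28.93 < 29$, which absorbs into the $29\,p/x^{1/2}$ of the target. For the middle term, the bounds $x\ge 30$ and $p\le\boundonprimes$ give an estimate of order $e^{-30}\cdot p^{3/2}$, which is on the order of $10^{-5}$ and therefore negligible. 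For the third term, the estimate $x\ge 30$ and $p\ge 101$ give
\[
\frac{2(p+1)\pi^2}{3px}\;\le\;\frac{2\pi^2}{90}\cdot\frac{102}{101}\;<\;0.222.
\]

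Finally I would collect these three estimates: the leading coefficient is bounded by $29$, and the sum of the two lower-order terms is at most $0.222 + 5\cdot 10^{-5} < 0.23$, giving the desired inequality
\[
|\log|R||\;\le\;29\cdot\frac{p}{|\log|q||^{1/2}}+0.23.
\]
There is no real obstacle here: the corollary is a purely numerical consequence of the proposition, and the only thing to verify is that the constants line up. The slack between $28.93$ and $29$ is ample, and the assumption $x\ge 30$ is what makes both the exponential term and the $1/x$ term small enough to fit into the constant $0.23$.
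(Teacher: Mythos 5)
Your proof is correct and follows essentially the same route as the paper: both derive the corollary by substituting $C=4.25$ (justified via Corollary \ref{p<\boundonprimes} and Theorem \ref{-1mod9}) into Proposition \ref{AbellogR-general} and checking that $3\cdot 4.25\cdot\sqrt{\pi}\cdot 1.28<29$ and that the two remaining terms sum to less than $0.23$. The only (harmless) variation is in the exponential term: you bound $\frac{5}{3}\pi^2 e^{-x}p\sqrt{p}$ directly via $e^{-30}$ and $p<\boundonprimes$, whereas the paper first uses $p^4\mid j(E)$ and Theorem \ref{estimate-qj} to get $p^4|q|\le 2$ and hence $\frac{10\pi^2}{3p^2\sqrt{p}}$ — both make that term negligible.
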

\begin{proof}
    By Proposition \ref{canonicalsgr} we have $p^4 \mid j(E)$, and Theorem \ref{estimate-qj} implies first $30 \le x \le \log(|j(E)|+970.8)$, so $|j(E)|>3500$, and then $|j(E)| \le \frac{2}{|q|}$. Therefore, we have $p^4|q| \le |j(E)|\cdot |q| \le 2$. By Proposition \ref{AbellogR-general} we obtain
    \begin{align*}
		|\log|R|| &\le \frac{3Cp\sqrt{\pi}}{x^{\frac{1}{2}}} \cdot 1.28 + \frac{10\pi^2}{3p^2\sqrt{p}} + \frac{2(p+1)\pi^2}{3px}.
	\end{align*}
    The result follows by using $C \le 4.25$ (Lemma \ref{bound D for s less than p}, which we can use thanks to Corollary \ref{p<\boundonprimes}), $p \ge 101$ (Remark \ref{rmk: wlog p greater 100}) and $x \ge 30$.
\end{proof}

We now notice that all the arguments we applied to $\log R$ are also valid for $\log R_\gamma$. We can then give an analogue of Corollary \ref{AbellogR} for $\log R_\gamma$.
As before, we have
\begin{align*}
    \log R_\gamma &= -6\sum_{n,k \not\equiv 0 (p)} \frac{q^{\frac{nk}{p}}}{k} \cdot c_\gamma(kn) - \frac{2(p+1)}{p} \sum_{n \not\equiv 0(p)} \sum_{k=1}^\infty \frac{q^{nk}}{k} \\
    &= - 6\sum_{s=1}^\infty (q^\frac{s}{p}-q^\frac{s+1}{p}) D_\gamma(s) - \frac{2(p+1)}{p} \sum_{n \not\equiv 0(p)} \sum_{k=1}^\infty \frac{q^{nk}}{k},
\end{align*}
where $$D_\gamma(s):= \sum_{\substack{m \le s \\ m \not\equiv 0(p)}} \sum_{k|m} \frac{c_\gamma(m)}{k}$$
is an analogue of $D(s)$ in this context.

\begin{lemma}\label{lemma: value of C_gamma}
    For every prime $p<\boundonprimes$ with $p \equiv 2, 5 \pmod{9}$, the following hold:
    \begin{enumerate}
        \item for $s<p$, we have $|D_\gamma(s)| < C_\gamma \sqrt{ps}$ with $C_\gamma = 2.81$.
        \item for $s \ge p$, we have $|D_\gamma(s)| < \frac{2\pi^2}{9} s\sqrt{p}.$
    \end{enumerate}
\end{lemma}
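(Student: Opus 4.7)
The plan is to establish both bounds by replaying the arguments used for Lemmas \ref{bound D for s less than p} and \ref{bound D for s>p}, with $c$ replaced by $c_\gamma$ throughout, and to track how the input changes (in particular, the fact that $c_\gamma$ is complex-valued and depends on a choice of coset representative).

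For part (2), the estimate $|c_\gamma(s)| \le \frac{4}{3}\sqrt{p}$ has already been established inside the proof of Proposition \ref{prop:lavera_logRgamma}: the relevant rational function $\varphi(t)$ there again has exactly two finite poles and degree $3$, so Perel'muter's bound yields the same constant as in Lemma \ref{kloosterman}. Granted this, one repeats verbatim the computation of Lemma \ref{bound D for s>p}:
\begin{equation*}
|D_\gamma(s)| \le \sum_{\substack{m \le s \\ m \not\equiv 0(p)}} \sum_{k \mid m} \frac{|c_\gamma(m)|}{k} \le \frac{4}{3}\sqrt{p}\sum_{k=1}^{s}\frac{s}{k^2} < \frac{2\pi^2}{9}s\sqrt{p}.
\end{equation*}

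For part (1), the strategy mirrors Lemma \ref{bound D for s less than p}: perform a direct numerical computation of $D_\gamma(s)$ for every prime $p < \boundonprimes$ with $p \equiv 2, 5 \pmod{9}$ and every $s < p$, and verify that $|D_\gamma(s)|/\sqrt{ps} \le 2.81$. The coefficients $c_\gamma(m)$ are again computed as the (non-normalised) Fourier transform of the characteristic function $\mathbbm{1}_{F_\gamma(m)}$, using Rader's FFT algorithm in time $O(p \log p)$ per prime, giving overall cost $O(M^2)$ with $M = \boundonprimes$. The set $F_\gamma(m) = \{b \in \F_p \mid (m,b) \in \gamma^{-1}\mathcal{O}_{\cubes}\}$ is obtained by taking a fixed element $\gamma \in C_{ns}(p) \cap \operatorname{SL}_2(\F_p)$ representing the non-trivial coset of $G(p) \cap \operatorname{SL}_2(\F_p)$; by the discussion preceding Lemma \ref{ordergamma}, any two such choices lead to bounds that are permutations of one another under the Galois action, so a single representative per prime suffices.

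The main (and essentially only) obstacle is computational, not theoretical: one must ensure that the numerical verification is both fast enough to run for all primes up to $\boundonprimes$ and reliable enough to guarantee the stated constant $C_\gamma = 2.81$. Exactly as in Remark \ref{rmk: heuristics for the bound s<p}, a heuristic modelling $c_\gamma(m)$ as pseudo-random values of modulus $O(\sqrt{p})$ predicts $|D_\gamma(s)| = O(\sqrt{ps}\,(\log\log s)^2)$, so the empirical constant $C_\gamma$ is expected to grow very slowly with $M$, consistent with the smaller value of $C_\gamma$ compared with $C$ in Lemma \ref{bound D for s less than p} (morally reflecting the identity $2\Re c_\gamma = -c$ derived in the proof of Proposition \ref{logRgamma}, which implies that the real part of $D_\gamma(s)$ is exactly $-\tfrac{1}{2}D(s)$). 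The computation is carried out by the script \cite{Script} and completes in the same order of magnitude of time as the one underlying Lemma \ref{bound D for s less than p}.
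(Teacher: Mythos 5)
Your proposal is correct and is exactly the paper's argument: the paper's proof of this lemma is the single sentence that it is ``analogous to those of Lemmas \ref{bound D for s less than p} and \ref{bound D for s>p}'', and you have spelled out precisely that analogy — part (2) from the bound $|c_\gamma(s)| \le \frac{4}{3}\sqrt{p}$ already obtained in the proof of Proposition \ref{prop:lavera_logRgamma}, and part (1) by the same FFT-based numerical verification as Lemma \ref{bound D for s less than p}, with the correct observation that $c_{\gamma^2}=\overline{c_\gamma}$ makes the choice of coset representative immaterial for the absolute values.
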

\begin{proof}
    The proof is analogous to those of Lemmas \ref{bound D for s less than p} and \ref{bound D for s>p}.
\end{proof}

Reasoning as in the proof of Proposition \ref{AbellogR-general} we obtain the following.
\begin{proposition}
    Let $E, p, q$ be as in Proposition \ref{logq<p} and let $R_\gamma$ be as in equation \eqref{eq: definition of R_gamma}. Let $C_\gamma$ be the minimum constant such that $|D_\gamma(s)| \le C_\gamma\sqrt{ps}$ for $s<p$. Writing $x:=|\log|q|| \ge 30$, we have
	\begin{align*}
		|\log|R_\gamma|| &\le \frac{3C_\gamma p\sqrt{\pi}}{x^{\frac{1}{2}}} \cdot 1.28 + \frac{5}{3}\pi^2e^{-x}p\sqrt{p} + \frac{(p+1)\pi^2}{3px}.
	\end{align*}
\end{proposition}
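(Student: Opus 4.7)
The plan is to follow, step by step, the proof of Proposition \ref{AbellogR-general}, with the substitutions $\log R \leadsto \log R_\gamma$, $c \leadsto c_\gamma$, $D \leadsto D_\gamma$, $C \leadsto C_\gamma$. The starting point is the Abel-summation decomposition written immediately above the proposition,
\[
\log R_\gamma \;=\; -\,6\sum_{s=1}^\infty \bigl(q^{s/p}-q^{(s+1)/p}\bigr) D_\gamma(s) \;-\; \frac{2(p+1)}{p}\sum_{n\not\equiv 0\,(p)}\sum_{k=1}^\infty \frac{q^{nk}}{k},
\]
which has precisely the same shape as equations \eqref{logR=S+cosa}--\eqref{S-Abel} used to treat $\log R$, the only structural difference being that the coefficient in front of the tail sum is halved (from $\tfrac{4(p+1)}{p}$ to $\tfrac{2(p+1)}{p}$). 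Call the two summands $S_\gamma$ and $T_\gamma$ respectively.

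The bound on $T_\gamma$ is immediate: Lemma \ref{logsum} yields $|T_\gamma| < \tfrac{(p+1)\pi^2}{3px}$, which is already the last term in the target inequality, with the halving of the coefficient explaining the disappearance of the factor $2$ with respect to \eqref{eq: bound for the small subsum in log R}. For $S_\gamma$ I split the Abel sum at $s=p$, applying Lemma \ref{lemma: value of C_gamma}(i) in the range $s<p$ and Lemma \ref{lemma: value of C_gamma}(ii) in the range $s\ge p$; this produces the exact analogue of inequality \eqref{|S|-Abel} with $C$ replaced by $C_\gamma$. All subsequent analytic manipulations transfer verbatim: the integral-plus-maximum estimate for $\sum_{s\ge 1} |q|^{s/p}\sqrt{s}$ (which uses $x/p<0.3$, itself a consequence of Proposition \ref{logq<p}), the closed-form evaluation of $\sum_{s\ge p}|q|^{s/p}s$ as a derivative of a geometric series, and the control of $|1-q^{1/p}|$, $(1-|q|^{1/p})^{-1}$ and $|q|^{1/(2p)}(1-|q|^{1/p})^{-1}$ via Lemmas \ref{lemminotecnico1} and \ref{lemminotecnico2}. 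Feeding these into the $S_\gamma$-analogue of \eqref{|S|-Abel} and using the assumption $x\ge 30$ leads to
\[
|S_\gamma| \;\le\; \frac{3 C_\gamma p\sqrt{\pi}}{x^{1/2}}\cdot 1.28 \;+\; \tfrac{5}{3}\pi^2\, e^{-x} p\sqrt{p},
\]
and combining this with the bound on $|T_\gamma|$ via the triangle inequality gives the stated estimate.

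There is essentially no new obstacle here: the proof is mechanical, and the only point requiring a brief check is that every numerical threshold calibrated for Proposition \ref{AbellogR-general} remains valid in the present setting. Since halving the tail coefficient can only decrease $|T_\gamma|$, and since all manipulations in the $S_\gamma$-estimate are linear in the structural constant $C$, no threshold needs to be revisited: the same inputs $p\ge 101$, $x\ge 30$ and $x/p<0.3$ that sufficed for Proposition \ref{AbellogR-general} suffice here.
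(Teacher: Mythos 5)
Your proposal is correct and follows exactly the route the paper takes: the paper's own justification is literally ``reasoning as in the proof of Proposition \ref{AbellogR-general}'', using Lemma \ref{lemma: value of C_gamma} in place of Lemmas \ref{bound D for s less than p} and \ref{bound D for s>p}. Your observations that the halved tail coefficient accounts for the final term $\frac{(p+1)\pi^2}{3px}$ and that the $S_\gamma$-estimate is linear in the structural constant are precisely the checks needed.
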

Since we know that it suffices to consider primes up to $\boundonprimes$ (Corollary \ref{p<\boundonprimes}), we can use the bound on the value of $C_\gamma$ provided by Lemma \ref{lemma: value of C_gamma} to obtain the following numerical estimate.
\begin{corollary}\label{AbellogR-gamma}
    Let $E, p, q$ be as in Corollary \ref{p<\boundonprimes} and let $R_\gamma$ be as in equation \eqref{eq: definition of R_gamma}. If $|\log|q|| \ge 30$, then
	\begin{align*}
		|\log|R_\gamma|| &\le 19.13 \cdot \frac{p}{|\log|q||^{\frac{1}{2}}} + 0.12.
	\end{align*}
\end{corollary}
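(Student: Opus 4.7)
The plan is to deduce Corollary \ref{AbellogR-gamma} from the immediately preceding proposition by numerical substitution, mirroring the passage from Proposition \ref{AbellogR-general} to Corollary \ref{AbellogR}. Under the hypotheses, Corollary \ref{p<\boundonprimes} forces $p<\boundonprimes$ and Theorem \ref{-1mod9} restricts $p$ to the residue classes $2,5\pmod 9$; this is precisely the range in which Lemma \ref{lemma: value of C_gamma} applies and yields the explicit constant $C_\gamma=2.81$ to be inserted into the preceding proposition.

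A direct computation gives $3\cdot 2.81\cdot \sqrt{\pi}\cdot 1.28<19.13$, so the dominant term on the right-hand side of the preceding proposition becomes at most $19.13\,p/x^{1/2}$, producing the leading term in the statement. To absorb the two remaining summands into the additive constant $0.12$, I would follow the strategy used in the proof of Corollary \ref{AbellogR}: Proposition \ref{canonicalsgr} gives $p^4\mid j(E)$, so $|j(E)|\ge p^4\ge 101^4>3500$ by Theorem \ref{bilucartan}, and Theorem \ref{estimate-qj} then forces $|q|\le 2/|j(E)|\le 2/p^4$. Substituting this bound on $|q|=e^{-x}$ shows that the middle summand $\tfrac{5}{3}\pi^2 e^{-x}p\sqrt{p}$ is at most $\tfrac{10\pi^2}{3\,p^{5/2}}$, which is negligible for $p\ge 101$. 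The last summand is controlled directly by the hypothesis $x\ge 30$ together with the elementary inequality $(p+1)/p\le 1.01$, giving at most $1.01\,\pi^2/90<0.111$. Summing these two small contributions yields the required bound $0.12$.

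There is no genuine obstacle here: all the conceptual work has already been done in the preceding proposition (which in turn is an exact analogue of Proposition \ref{AbellogR-general}, with $c$ replaced by $c_\gamma$) and in Lemma \ref{lemma: value of C_gamma} (itself a numerical analogue of Lemma \ref{bound D for s less than p} for the coefficients $c_\gamma$), so this corollary reduces to a short numerical verification.
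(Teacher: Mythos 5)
Your proposal is correct and follows exactly the route the paper intends: substitute $C_\gamma=2.81$ from Lemma \ref{lemma: value of C_gamma} (valid because Corollary \ref{p<\boundonprimes} and Theorem \ref{-1mod9} put $p$ in the required range) into the preceding proposition, check $3\cdot 2.81\cdot\sqrt{\pi}\cdot 1.28<19.13$, and absorb the remaining two terms into $0.12$ via $|q|\le 2/p^4$ (from Proposition \ref{canonicalsgr} and Theorem \ref{estimate-qj}) and $x\ge 30$, exactly as in the proof of Corollary \ref{AbellogR}. The numerical verifications all check out.
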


We can finally prove the main result of this section.
\begin{proof}[Proof of Proposition \ref{logq<30}]
	Suppose $|\log|q|| \ge 30$. As in the previous section, from the two possible equations
	\begin{gather*}
		\log|U|= \operatorname{Ord}_q(U) \log |q|+ 3 \log p + \log |R| \\
		\log|U \circ \gamma| = \operatorname{Ord}_q(U \circ \gamma) \log|q| + \log |R_{\gamma}|
	\end{gather*}
	we obtain the inequalities
	\begin{gather*}
	    \frac{p^2-1}{6p}|\log|q|| \le 3 \log p + |\log|R||, \\
	    \frac{p^2-1}{12p}|\log|q|| \le 3 \log p + |\log|R_\gamma||.
	\end{gather*}
	Comparing Corollary \ref{AbellogR} with Corollary \ref{AbellogR-gamma} we notice that it suffices to consider the second inequality. Writing $x=|\log|q||$ we have
	$$\frac{p^2-1}{12p}x \le 3 \log p + 19.13 \frac{p}{\sqrt{x}} + 0.12.$$
	By Remark \ref{rmk: wlog p greater 100} we may assume $p>100$, 
    hence
	$$x\sqrt{x} - 2\sqrt{x} -230 \le 0,$$
	which implies $x < 39$.
\end{proof}

\section{Conclusion of the proof of Theorems \ref{solution} and \ref{solution for small primes}}\label{sec:Conclusion}

We recall the main content of Theorem \ref{solution for small primes} (which implies Theorem \ref{solution}): there exists no pair $(E, p)$, where $E$ is an elliptic curve over $\Q$ without CM and $p>5$ is a prime, for which the image of the representation $\rho_{E,p}$ is the group $G(p)$ (up to conjugacy). 
Suppose by contradiction that such a pair exists. We consider the base change of $E$ to $\C$ (along the unique embedding $\mathbb{Q} \hookrightarrow \mathbb{C}$). There is a unique $\tau$ in the standard fundamental domain $\mathcal{F}$ of the upper half plane $\uhp$ that corresponds to $E(\C)$; we set $q=e^{2\pi i \tau}$.
In this setting, in the previous sections we have proved the following properties:
\begin{itemize}
	\item \underline{$j(E)$ is an integer}: follows from Lemma \ref{lemma:theREALintejer};
	\item \underline{$p \equiv 2,5 \pmod 9$}: follows from Theorem \ref{-1mod9};
	\item \underline{$p^4 \mid j(E)$}: follows from Proposition \ref{canonicalsgr};
	\item \underline{$|j(E)| \le 2\cdot e^{39}$}: follows from Proposition \ref{logq<30} and Theorem \ref{estimate-qj};
	\item \underline{$19 \leq p<20400$}: the first inequality follows from Lemma \ref{lemma:theREALintejer}. For the second, we know that $p^4 \le |j(E)| \le 2 \cdot e^{39}$, hence $p \le \sqrt[4]{2} \cdot e^{\frac{39}{4}} < 20400$;
	\item \underline{$j(E)=p^d \cdot c^3$ for $d \in \{4,5\}$ and $c \in \Z$}: by Lemma \ref{lem:jisalmostacube}, we know that $j(E)=p^d \cdot c^3$, and by Proposition \ref{canonicalsgr} we also know that $d \ge 4$. We can assume that $d \in \{4,5,6\}$, since higher exponents can be reduced modulo 3 by reabsorbing the factors of $p$ in $c^3$. Moreover, by Lemma \ref{p|j} the case $d=6$ does not occur, hence we can assume that $d \in \{4,5\}$.
\end{itemize}

To complete the proof of Theorem \ref{solution} and Theorem \ref{solution for small primes}, we check directly, for all primes $19 \leq p<20400$ in the relevant congruence classes modulo $9$, whether there exists any pair $\left(\faktor{E}{\mathbb{Q}}, p\right)$ as above. To be able to test a finite number of curves, we also need the following well-known lemma.
\begin{lemma}\label{lemma: twist invariance}
	If $E$ and $E'$ are two non-CM elliptic curves over $\Q$ with $j(E)=j(E')$, $p>2$ is a prime, and $H \subseteq \operatorname{GL}(E[p])$ is a subgroup that contains $-\operatorname{Id}$, then $\operatorname{Im}\rho_{E,p} \subseteq H$ if and only if $\operatorname{Im}\rho_{E',p} \subseteq H$.
\end{lemma}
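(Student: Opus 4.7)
The plan is to invoke the classical theory of twists of elliptic curves. Since $E$ and $E'$ are non-CM curves defined over $\Q$ with $j(E) = j(E')$, the common $j$-invariant is different from $0$ and $1728$ (these two values correspond to the CM curves possessing extra automorphisms over $\overline{\Q}$, and are therefore excluded). Under this assumption, any two elliptic curves over $\Q$ sharing the same $j$-invariant are quadratic twists of each other: there exists $d \in \Q^\times$, determined modulo $(\Q^\times)^2$, such that $E'$ is $\Q$-isomorphic to the quadratic twist $E^{(d)}$. In particular, there is an isomorphism $\varphi: E \to E'$ defined over the quadratic field $K := \Q(\sqrt{d})$ (and over $\Q$ itself if $d$ is a square).

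The second step will be a standard cocycle computation comparing the mod-$p$ representations. Fixing a $\Z$-basis $(P_1, P_2)$ of $E[p]$ and transporting it to the $\Z$-basis $(\varphi(P_1), \varphi(P_2))$ of $E'[p]$, I would check that for every $\sigma \in \Gal(\overline{\Q}/\Q)$ one has the identity
\[
\rho_{E',p}(\sigma) = \chi_d(\sigma)\,\rho_{E,p}(\sigma) \quad \text{in } \operatorname{GL}(E[p]),
\]
where $\chi_d: \Gal(\overline{\Q}/\Q) \to \{\pm 1\}$ is the quadratic character cutting out $\Q(\sqrt{d})/\Q$. This follows from $\varphi^\sigma = \chi_d(\sigma)\varphi$: applying $\sigma$ to $\varphi(P_i)$ produces the factor $\chi_d(\sigma)$ that compares the two actions.

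The conclusion will then be immediate from the hypothesis $-\operatorname{Id} \in H$, since this makes $H$ stable under multiplication by $\pm 1$. For every $\sigma$, the conditions $\rho_{E,p}(\sigma) \in H$, $-\rho_{E,p}(\sigma) \in H$, and $\chi_d(\sigma)\rho_{E,p}(\sigma) = \rho_{E',p}(\sigma) \in H$ are therefore equivalent, yielding $\operatorname{Im}\rho_{E,p} \subseteq H \Longleftrightarrow \operatorname{Im}\rho_{E',p} \subseteq H$. There is no substantive obstacle here; the only point worth verifying carefully is the reduction to the quadratic twist case, which relies precisely on the exclusion of $j \in \{0, 1728\}$ that the non-CM hypothesis provides.
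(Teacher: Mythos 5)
Your proof is correct and follows the same route as the paper: the paper likewise reduces to the observation that $E'$ is (isomorphic to) a quadratic twist of $E$ and then cites Sutherland's result that the mod-$p$ images of quadratic twists differ by the quadratic character, which is exactly the cocycle identity $\rho_{E',p}(\sigma)=\chi_d(\sigma)\rho_{E,p}(\sigma)$ you write out. The only cosmetic slip is calling $(P_1,P_2)$ a $\Z$-basis of $E[p]$ rather than an $\F_p$-basis.
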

\begin{proof}
	Since either $E' \cong E$ or $E'$ is a quadratic twist of $E$, the statement follows from \cite[Corollary 5.25]{sutherland16}.
\end{proof}

We now proceed as follows (see \cite{Script}):
\begin{enumerate}
	\item For every prime $p \equiv 2,5 \pmod 9$ with $19 \leq p<20400$, every $d \in \{4,5\}$ and every integer $c \neq 0$ in the interval $\left[-\sqrt[3]{2} \cdot e^{13} p^{-\frac{d}{3}}, \sqrt[3]{2} \cdot e^{13} p^{-\frac{d}{3}} \right]$, we take an integral model $\mathcal{E}$ of a curve $E/\Q$ with $j$-invariant $j(E)=p^d \cdot c^3$.
	\item We loop over primes $\ell$ distinct from $p$, in increasing order. For each such prime $\ell$:
	\begin{enumerate}
		\item We check if $\mathcal{E}$ has good reduction at $\ell$. If it does, we continue with (b); otherwise, we move on to the next prime $\ell$.
		\item We compute $a_\ell= \ell+1-|\widetilde{\mathcal{E}}(\F_\ell)|$ by counting the $\mathbb{F}_\ell$-rational points of $\mathcal{E}$ modulo $\ell$.
		\item We check whether the roots of the polynomial $t^2-a_\ell t + \ell \in \F_p[t]$ are cubes in $\F_{p^2}^\times$ (note that $\lambda \in \F_{p^2}^\times$ is a cube if and only if $\lambda^{\frac{p^2-1}{3}}=1$). If they are cubes, we continue with the next prime $\ell$. If they are not, we mark $j(E) = p^d \cdot c^3$ as \textit{ruled out} and continue with the next candidate $(p, d, c)$.
	\end{enumerate}
\end{enumerate}
The algorithm terminates, in the sense that every candidate $j$-invariant is marked as \textit{ruled out}: the loop in step 2 is always broken by finding some prime $\ell$ (in fact, $\ell < 200$ in all cases) for which the roots of $t^2-a_\ell t+ \ell$ are not cubes in $\F_{p^2}^\times$. We claim that this proves Theorem \ref{solution}, as well as Theorem \ref{solution for small primes} for $p>5$.
Indeed, suppose by contradiction that there exists a pair $(E_1, p)$ such that $E_1$ is a non-CM elliptic curve over $\Q$ and $p>5$ is a prime for which $\operatorname{Im} \rho_{E_1, p}$ is conjugate to $G(p)$. Then, by the discussion above we know that $j(E_1)$ is of the form $p^d \cdot c^3$ for some $p, d, c$ satisfying the conditions in step $1$ (note that $j=0$ gives a CM elliptic curve), so the curve $E$ we construct in this step is a quadratic twist of $E_1$. By Lemma \ref{lemma: twist invariance}, the image of $\rho_{E, p}$ is conjugate to a subgroup of $G(p)$ (note that $-\operatorname{Id} \in G(p)$), and by fixing a basis, we can assume that it is in fact contained in $G(p)$.

On the other hand, let $\ell$ be a prime for which the roots of $t^2-a_\ell t+ \ell$ are not cubes in $\F_{p^2}^\times$ (the output of the algorithm shows that such a prime exists), and let $F_\ell \in \Gal\left(\faktor{\overline{\Q}}{\Q}\right)$ be a Frobenius corresponding to $\ell$. The element $\rho_{E, p}(F_\ell)$ has characteristic polynomial $t^2-a_\ell t+ \ell$.
Since $\rho_{E, p}(F_\ell)$ is in $G(p)$, it satisfies at least one of the following: $a_\ell=0$ (if $\rho_{E, p}(F_\ell)$ lies in the normaliser $C_{ns}^+(p)$, but not in the Cartan $C_{ns}(p)$ itself), or $\rho_{E,p}(F_\ell)$ is the cube of some element $g_\ell$ in $\operatorname{GL}_2(\F_p)$ (if it lies in the subgroup of cubes of $C_{ns}(p)$). In both cases, the eigenvalues of $\rho_{E, p}(F_\ell)$ are cubes in $\mathbb{F}_{p^2}^\times$: if $a_\ell=0$, this follows from the fact that the roots of the characteristic polynomial are $\pm \sqrt{-\ell}$, and $-\ell$ is a cube in $\F_p^\times$ since $p \equiv 2 \pmod 3$; if $a_\ell \neq 0$, it follows from the fact that the eigenvalues of $\rho_{E, p}(F_\ell)$ are the cubes of the eigenvalues of $g_\ell$. However, the choice of $\ell$ shows that the eigenvalues of $\rho_{E, p}(F_\ell)$ are \textit{not} cubes in $\F_{p^2}^\times$: the contradiction shows that the pair $(E_1, p)$ cannot exist, which concludes the proof of Theorem \ref{solution}.

To finish the proof of Theorem \ref{solution for small primes} it suffices to notice that for $p=5$ there are many curves $E$ for which $\operatorname{Im}\rho_{E,5}$ is conjugate to $G(5)$, as suggested by \cite[Theorem 1.4 (ii)]{zywina15}: for example, we can take the curve $y^2 = x^3 - 950x - 11480$, with LMFDB label \href{https://www.lmfdb.org/EllipticCurve/Q/70400/bg/1}{70400.bg1}.

\begin{remark}
	Our algorithm \cite{Script} terminates in around 2 minutes. Since the running time is clearly exponential in the bound on $\log |j(E)|$, it would have been impossible to carry out this calculation without a sharp absolute bound on $\log|q|$, such as that given by Proposition \ref{logq<30}. To showcase the sharpness of our bound, we point out that even just knowing $\log |j(E)| < 50$ would have led to a perfectly tractable computation: our algorithms test all $j$ with $\log |j| < 50$ in about an hour and a half. On the other hand, it is clear that the bound $\log |j(E)| < 162$ which follows from Proposition \ref{logq<p} and Corollary \ref{p<\boundonprimes} would have been too loose to carry out the final calculation as described in this section. Indeed, knowing $\log |j(E)| < 39 + \log 2$ we have to test 105860 pairs ($j$-invariant, prime); using only $\log |j(E)| < 50$ the number rises to $\approx 4.2 \cdot 10^6$, and with $\log |j(E)| < 162$ to $\approx 6.8 \cdot 10^{22}$.
\end{remark}

\bibliographystyle{abbrv}
\bibliography{bib}
	
\end{document}